\newcommand{\RR}{\mathbb{R}} %Real R
\newcommand{\CC}{\mathbb{C}} %Complex C
\newcommand{\ZZ}{\mathbb{Z}}
\newcommand{\al}{\alpha}
\newcommand{\be}{\beta}
\newcommand{\ga}{\gamma}
\newtheorem{thm}{Theorem}[section]
\newtheorem{cor}[thm]{Corollary}
\newtheorem{lem}[thm]{Lemma}
\newtheorem{prop}[thm]{Proposition}
\newtheorem{definition}[thm]{Definition}
\newtheorem{remark}[thm]{Remark}
\newtheorem{example}[thm]{Example}
\numberwithin{equation}{section}
\title{A Kobayashi pseudo-distance for holomorphic bracket generating distributions}
\date\today
\author{Aeryeong Seo}
\address{School of Mathematics, Korea Institute for Advanced Study (KIAS),
 85 Hoegiro (Cheongnyangni-dong 207-43), Dongdaemun-gu,
 Seoul 130-722, Republic of Korea}%
\email{aeryeongseo@kias.re.kr}
\thanks{}%
\subjclass[2010]{32Q45, 32M10, 53C17, 32F45}%
\keywords{Kobayashi pseudo-distance, Kobayashi hyperbolicity, holomorphic bracket generating distribution, complex homogeneous, flag domain}%
\begin{document}

\maketitle

\markboth{Aeryeong Seo}{Kobayashi pseudo-distance for holomorphic bracket generating  distribution}

\begin{abstract}
In this paper, we generalize the Kobayashi pseudo-distance 
to complex manifolds which admit 
holomorphic bracket generating distributions. 
The generalization is based on Chow's theorem 
in sub-Riemannian geometry. 
Let $G$ be a linear semisimple Lie group.
For a complex $G$-homogeneous manifold $M$
with a $G$-invariant holomorphic bracket generating distribution $D$, 
we prove that $(M,D)$ is Kobayashi hyperbolic if and only if
the universal covering of $M$ is 
a canonical flag domain and the induced distribution is the superhorizontal 
distribution.
\end{abstract}

\section{introduction}
Let $M$ be a complex manifold. 
We say that $D$ is a holomorphic distribution on $M$ 
when $D$ is a holomorphic subbundle of the 
holomorphic tangent bundle $T_M$ of $M$.
We say that it is bracket generating if 
any local holomorphic frame $\{X_1, \ldots, X_d\}$ 
for $D$, together with all of its iterated Lie brackets 
$\{ [X_i, X_j],\, [X_i, [X_j, X_\ell]],\, \ldots\,\}$, spans $T_M$.
In this paper, we generalize the Kobayashi pseudo-distance 
to the complex manifolds
that admit holomorphic bracket generating distributions (HBGD for short) on $M$.

Let us first recall the definition of the Kobayashi pseudo-distance. 
Let $M$ be a complex manifold 
and $\Delta=\{z\in \mathbb C: |z|<1\}$ a unit disc.
For $x, \, y\in M$ define 
$$
\delta_{M}(x,y) = \inf \left \{d_\Delta(a,b): h\colon \Delta\rightarrow M \text{ holomorphic, } \, h(a) = x,\,h(b) = y\right\},
$$
where $d_\Delta$ is the Poincar\'e distance on $\Delta$.
The {\it Kobayashi pseudo-distance} of $M$ is defined by
\begin{equation}
d_{M}(x,y) = \inf  \sum_{j=1}^N\delta_{M}(x_{j-1}, x_j)
\end{equation}
where the infimum is taken over the sets of points $\{x_0,\ldots, x_N\}$ of $M$
such that $x=x_0$ and $y = x_N$. 
When $d_{M}$ is a distance, 
we say that $M$ is {\it Kobayashi hyperbolic}.

In the context of complex manifolds, Kobayashi first introduced it 
as the largest pseudo-distance
on a complex manifold satisfying  
the distance decreasing property with respect to 
the holomorphic mappings between complex manifolds (\cite{Kobayashi}).
If a complex manifold has a complete Hermitian metric
and its holomorphic sectional curvature is bounded from above
by a negative constant, then the manifold is complete Kobayashi hyperbolic.
Because of this property, complete Kobayashi hyperbolic manifolds are considered 
to be a generalization of the Riemann surfaces of genus 
greater than or equal to $2$ equipped with the Poincar\'e distance.

%and their study related to those is still active in both several complex variables 
%and complex geometry.

A holomorphic mapping $f\colon N \rightarrow M$ 
between complex manifolds $M,\, N$ is tangential to $D$ 
if $df(T_N)\subset D$. The generalization 
of the Kobayashi pseudo-distance to complex manifolds
with HBGDs in this paper that we propose is the following.

\begin{definition}\label{definition of the Kobayashi distance}
Let $M$ be a complex manifold 
with a holomorphic distribution $D$.
For $x,\, y\in M$, define 
\begin{equation}
\begin{aligned}
\delta_{M,D}(x,y)&= \inf \big\{d_\Delta(a,b): 
h \colon \Delta \rightarrow M \text{ holomorphic }\\
&\quad\quad\quad\quad\quad\quad\quad\text{tangential to } D, \,h(a) = x,\,h(b) = y\big\}
\end{aligned}
\end{equation}
where $d_\Delta$ is the Poincar\'e distance on $\Delta$.
If there is no holomorphic disc tangential to $D$ connecting $x$ and $y$, 
then set $\delta_{M,D}(x,y)=\infty$.
For $x,\,y\in M$, define {\it the Kobayashi pseudo-distance} of $(M,D)$ by
\begin{equation}
d_{M,D}(x,y) = \inf    \sum_{j=1}^N\delta_{M,D}(x_{j-1}, x_j)
\end{equation}
where the infimum is taken over finite sets of points $\{x_0,\ldots, x_N\}$ of $M$
such that $x=x_0$ and $y = x_N$. 
When $d_{M,D}$ is a distance, we say that $(M,D)$ is {\it Kobayashi hyperbolic}.
Moreover, if every Cauchy sequence with respect to 
$d_{M,D}$ has a convergent subsequence, 
then we say that $(M,D)$ is {\it complete Kobayashi hyperbolic}.
\end{definition}
In order to make sense of Definition \ref{definition of the Kobayashi distance}, 
we need $d_{M,D}$ to be bounded on all 
pairs of points belonging to the same connected component. 

\begin{thm}\label{finiteness}
Let $M$ be a connected complex manifold with 
a holomorphic bracket generating distribution $D$. 
Then for any two points $x,\, y$ in $M$, one has $d_{M,D}(x,y)<\infty$.
\end{thm}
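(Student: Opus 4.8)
The plan is to prove that the subset of $M\times M$ on which $d_{M,D}$ is finite is everything, by showing that the relation ``$d_{M,D}(x,y)<\infty$'' is an equivalence relation with open classes and then invoking the connectedness of $M$. That it is an equivalence relation is immediate from Definition~\ref{definition of the Kobayashi distance}: reflexivity uses the constant disc (which is trivially tangential to $D$), symmetry uses the automorphism $\zeta\mapsto-\zeta$ of $\Delta$, and transitivity is the triangle inequality already built into $d_{M,D}$. By the triangle inequality it then suffices to establish the \emph{local} statement: every $x_0\in M$ has a neighborhood $W$ such that $d_{M,D}(x_0,y)<\infty$ for all $y\in W$.

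For the local statement I would first record the mechanism by which Chow's theorem enters. Fix a holomorphic frame $X_1,\dots,X_d$ of $D$ on a coordinate ball $U\ni x_0$. By the Cauchy existence theorem for ordinary differential equations in the complex domain, each $X_i$ has a local flow $(t,p)\mapsto\phi^t_{X_i}(p)$ that is holomorphic in both variables on $\{\,|t|<\varepsilon\,\}\times V$ for some $\varepsilon>0$ and some neighborhood $V\subset U$ of $x_0$. Since $\frac{d}{dt}\phi^t_{X_i}(p)=X_i\bigl(\phi^t_{X_i}(p)\bigr)\in D$, the rescaled map $\zeta\mapsto\phi^{(\varepsilon/2)\zeta}_{X_i}(p)$ is a holomorphic disc $\Delta\to M$ tangential to $D$ sending $0$ to $p$, whence $\delta_{M,D}\bigl(p,\phi^t_{X_i}(p)\bigr)\le d_\Delta\bigl(0,2t/\varepsilon\bigr)<\infty$ for every $t$ with $|t|<\varepsilon/2$. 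Consequently, if $y$ is reached from $x_0$ by concatenating finitely many such flow arcs of the $X_i$ --- flowing for small real, imaginary, or arbitrary complex times --- then $d_{M,D}(x_0,y)$ is finite, being dominated by the corresponding finite sum of Poincar\'e lengths.

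It then remains to see that such concatenations already cover a neighborhood of $x_0$, and this is exactly Chow's theorem applied to the real distribution $D^{\mathbb R}\subset TM^{\mathbb R}$ underlying $D$. Over $U$ this distribution is framed by $\operatorname{Re}X_i,\operatorname{Im}X_i$ ($1\le i\le d$), and a short computation using $[X_i,\overline{X_j}]=0$ for holomorphic vector fields identifies the real Lie algebra they generate with $\{\operatorname{Re}Y,\operatorname{Im}Y\}$, $Y$ ranging over the complex Lie algebra generated by $X_1,\dots,X_d$. Since $D$ is bracket generating, this complex Lie algebra spans $T_{x_0}M$, so $D^{\mathbb R}$ satisfies the classical bracket-generating condition at $x_0$. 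By the Chow--Rashevskii theorem, equivalently by Sussmann's orbit theorem (the orbit of $x_0$ under $\{\operatorname{Re}X_i,\operatorname{Im}X_i\}$ being an immersed submanifold whose tangent space at $x_0$ contains the evaluation at $x_0$ of the generated Lie algebra), the set of points reachable from $x_0$ by concatenating small-time flows of the $\operatorname{Re}X_i$ and $\operatorname{Im}X_i$ --- which are the restrictions of $\phi^t_{X_i}$ to real, respectively imaginary, $t$ --- contains a neighborhood $W$ of $x_0$. Together with the previous paragraph this gives the local statement, and the theorem follows. The one place where the complex structure genuinely enters, and the step I expect to need the most care, is this passage from ``holomorphic bracket generating'' to the classical bracket-generating condition for $D^{\mathbb R}$, together with the identification of Chow's horizontal paths with chains of holomorphic discs tangential to $D$; the holomorphic ODE regularity, the single-arc Poincar\'e estimate, and the final globalization by connectedness are routine.
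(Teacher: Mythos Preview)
Your argument is correct. It shares the paper's overall arc---complex flows of a local holomorphic frame of $D$ supply horizontal holomorphic discs, a Chow-type step yields local connectivity by such discs, and connectedness of $M$ globalizes---but the Chow step is executed differently. The paper stays in the holomorphic category throughout: it completes $X_1,\dots,X_d$ by iterated brackets to a holomorphic frame $X_1,\dots,X_m$ of $T_M$, forms the map $F(t_1,\dots,t_m)=\phi_1(t_1)\circ\cdots\circ\phi_m(t_m)\,x$ from the complex flows, notes $dF|_0=\mathrm{id}$ to obtain a neighborhood, and then argues that each $\phi_j$ with $j>d$ can itself be realized through compositions of the $\phi_i$ with $i\le d$, so that every point in the image of $F$ is joined to $x$ by a finite chain of horizontal discs. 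You instead pass to the underlying real distribution $D^{\mathbb R}$ framed by $\operatorname{Re}X_i,\operatorname{Im}X_i$, use the identity $[X_i,\overline{X_j}]=0$ for holomorphic vector fields to transfer the bracket-generating hypothesis from $D$ to $D^{\mathbb R}$, and invoke the classical Chow--Rashevskii/Sussmann theorem as a black box; the horizontal real arcs it produces are restrictions of the complex flows $\phi_{X_i}^t$ to real or imaginary time, hence pieces of horizontal holomorphic discs. Your reduction buys a clean separation of concerns---the commutator-of-flows approximation is absorbed into the cited real theorem rather than reworked by hand---while the paper's route keeps the construction entirely holomorphic and makes the local chart $F$ explicit.
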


The proof of this theorem is based on the one given by Chow in \cite{Chow} 
in the context of sub-Riemannian geometry:
{\it 
let $M$ be a connected manifold and $D$ a bracket generating subbundle 
in the tangent bundle of $M$. 
Then any two points in $M$ can be joined by a horizontal piecewise curve.
}

In \cite{Demailly} the Kobayashi-Royden infinitesimal pseudo-metric
was generalized by Demailly to complex manifolds with holomorphic distributions.
\begin{definition}[Demailly \cite{Demailly}]
The {\it Kobayashi-Royden infinitesimal pseudo-metric} of $(M,D)$ is 
the Finsler metric on $D$ defined for any $x \in M$ and $v\in D_p$ by
\begin{equation}
\begin{aligned}
k_{M,D}(v) &= \inf \big\{ \lambda>0 : f \colon \Delta\rightarrow M  
\text{ holomorphic}\\
& \quad\quad\quad\text{ tangential to } D,  f(0)=x,\, \lambda\, df(0)=v \big\}.
\end{aligned}
\end{equation}
We say that $(M,D)$ is {\it infinitesimally Kobayashi hyperbolic} 
if $k_{M,D}$ is positive definite on every fiber $D_x$ 
and satisfies a uniform lower bound $k_{M,D}(v)\geq  \epsilon |v|_g$ 
for $\epsilon$ sufficiently small and depending on any smooth Hermitian metric 
$g$ on $D$, when $x$ describes a compact subset of $M$.
\end{definition}

If $D$ is the holomorphic tangent bundle of $M$ itself, then $d_{M,D}$ 
and $k_{M,D}$ become the usual Kobayashi pseudo-distance $d_M$ 
and Kobayashi-Royden infinitesimal pseudo-metric $k_M$. 
Notice that, by definition,
\begin{equation}
d_M(x,y)\leq d_{M,D}(x,y), \quad k_M(v)\leq k_{M,D}(v)
\end{equation}
for any $x,\,y\in M$ and $v\in D_x$.
Hence, if a complex manifold $M$ is (infinitesimally) Kobayashi hyperbolic, 
then $(M,D)$ is also (infinitesimally) Kobayashi hyperbolic. 
However, it should be stressed that the converse is not true. 

Let $G^{\mathbb C}$ be a complex semisimple Lie group.
Let $P$ be a parabolic subgroup in $G^{\mathbb C}$ and 
$G$ a non-compact real form of $G^{\mathbb C}$. 
Then an open $G$-orbit in the flag manifold $G^\CC/P$ 
is called a {\it flag domain} (cf. \cite{Huck_Fels_Wolf, Wolf}).
Without loss of generality, assume that the $G$-orbit of $eP$
is open in $G^\CC/P$.
Then we call a flag domain $F=G\cdot z_0 = G/V$ 
with $z_0=eP\in G^{\mathbb C}/P$ a {\it canonical flag domain}
if $V:=G\cap P$ is a compact Lie subgroup
containing a maximal torus of $G$. 
Two of the most important examples of canonical flag domains
are the period domains and the Mumford--Tate domains.
In \cite{Burstall_Rawnsley}, it is explained that 
the canonical flag domain $F$ admits a HBGD, denoted by $H_F$, 
which is referred to as the  
{\it superhorizontal distribution}.
We are able to show that the canonical flag domain $F$ with $H_F$ is complete Kobayashi hyperbolic 
by exploiting that it has holomorphic sectional curvature 
bounded from above by a negative constant. 
For more details, see Section 3 and 4.
Note that flag domains that are not Hermitian symmetric spaces of non-compact type,
always contain compact complex homogeneous submanifolds and hence 
those are not Kobayashi hyperbolic.

In \cite{Nakajima}, Nakajima proved that 
if a complex manifold $M$ is Kobayashi hyperbolic, 
then it is biholomorphic to a Siegel domain of second type, i.e.,
to a bounded homogeneous domain in a Euclidean space.
As an analogue of Nakajima's theorem, we can characterize canonical flag domains 
through Kobayashi hyperbolicity with HBGDs.
We say that $(M,D)$ is {\it G-homogeneous complex manifold with an invariant distribution} 
if $G$ acts transitively, holomorphically and almost effectively 
on $M$ and $D$ is invariant with respect to the action of $G$.
\begin{thm}\label{main theorem}
Let $G$ be a linear semisimple Lie group.
Then $(M,D)$ is a Kobayashi hyperbolic $G$-homogeneous complex manifold 
with an invariant holomorphic bracket generating distribution
if and only if the universal covering of $M$ is a canonical flag domain 
and the induced distribution is the superhorizontal one.
\end{thm}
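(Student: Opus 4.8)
The plan is to prove the two implications of the equivalence separately; throughout, $(M,D)$ is a $G$-homogeneous complex manifold with $G$-invariant HBGD, and the point is that such a pair is Kobayashi hyperbolic precisely when its universal covering is a canonical flag domain with the superhorizontal distribution. The implication from the flag-domain description to Kobayashi hyperbolicity follows quickly from the curvature estimate of Sections 3 and 4 together with a covering-space argument; the reverse implication carries the real content and rests on the structure theory of homogeneous complex manifolds of semisimple Lie groups.

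For the direction ($\Leftarrow$), let $p\colon \widetilde M\to M$ be the universal covering and assume it identifies $\widetilde M$ with a canonical flag domain $F=G/V$ and $p^{*}D$ with the superhorizontal distribution $H_{F}$. By Sections 3 and 4, $F$ carries a $G$-invariant Hermitian metric on $H_{F}$ whose holomorphic sectional curvature is bounded above by a negative constant, so an Ahlfors--Schwarz lemma shows that $(F,H_{F})$ is complete Kobayashi hyperbolic and, moreover, that $d_{F,H_{F}}$ dominates a complete distance and hence induces the manifold topology. I would then lift: a holomorphic disc $h\colon\Delta\to M$ tangential to $D$ lifts, since $\Delta$ is simply connected and $p$ is a local biholomorphism carrying $H_{F}$ to $D$, to a holomorphic disc $\widetilde h\colon\Delta\to F$ tangential to $H_{F}$, and a chain of such discs lifts to a chain in $F$ beginning at a prescribed point of the fibre. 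Since $p^{-1}(y)$ is closed and discrete and $d_{F,H_{F}}$ induces the topology, the $d_{F,H_{F}}$-distance from a fixed lift of $x$ to the set $p^{-1}(y)$ is bounded below by a positive constant whenever $x\neq y$; combining this with the fact that chains upstairs dominate the $d_{F,H_{F}}$-distance between their endpoints gives $d_{M,D}(x,y)>0$, so $(M,D)$ is Kobayashi hyperbolic.

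For the direction ($\Rightarrow$), assume in addition that $(M,D)$ is Kobayashi hyperbolic. First I would pass to the universal covering, lifting the action to a suitable connected covering group, so that $\widetilde M=G/K$ is simply connected, $K$ is connected, and $(\widetilde M,\widetilde D)$ is again such a pair, still Kobayashi hyperbolic (for a homogeneous manifold $d_{M,D}$ induces the topology, and this passes to coverings). The basic observation is that $\widetilde M$ admits no nonconstant holomorphic map $\CC\to\widetilde M$ tangential to $\widetilde D$: precomposing such a map with the dilations of $\Delta$ would force $\delta_{\widetilde M,\widetilde D}$, and hence $d_{\widetilde M,\widetilde D}$, to vanish on pairs of its values; in particular $\widetilde M$ contains no $\widetilde D$-tangential rational curves. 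I would then invoke the structure theory of homogeneous complex manifolds of semisimple groups: there is a $G$-equivariant holomorphic fibre bundle $\widetilde M\to\widehat M$ whose base $\widehat M$ is an open $G$-orbit in a flag manifold $G^{\CC}/P$ and whose fibre $\Phi$ is a connected compact complex homogeneous manifold. The next, and hardest, point is that this fibration is trivial and the open orbit is a canonical flag domain: one first shows $\widetilde D\cap T\Phi=0$, since otherwise this intersection is a nonzero homogeneous holomorphic distribution on the compact homogeneous manifold $\Phi$, which is tangent either to a rational curve or to a nonconstant image of $\CC$, contradicting hyperbolicity; one then rules out a positive-dimensional compact complex homogeneous fibre transverse to the bracket-generating distribution $\widetilde D$, so $\Phi$ is a point and $\widetilde M=G\cdot z_{0}$ is an open $G$-orbit in $G^{\CC}/P$ with isotropy $K=G\cap P$; and finally one checks that $K$ must be compact of maximal rank, the remaining noncompact possibilities again producing a compact complex homogeneous piece on which $\widetilde D$ would have to vanish, against bracket generation. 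This last block --- identifying $\widetilde M$ with a flag domain and showing the isotropy is compact --- is the step I expect to be the main obstacle, since a priori a homogeneous complex manifold of a semisimple group is neither a flag domain nor a bounded-domain-type object, and the argument must weave together the absence of $\widetilde D$-tangential entire and rational curves, the structure of the Tits fibration, and the measurability of open real-group orbits in flag manifolds.

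It then remains to identify $\widetilde D$. Since $\widetilde M=G/V$ is a canonical flag domain, $V=G\cap P$ is compact and $\mathfrak v^{\CC}=\mathfrak k^{\CC}$ is the Levi factor $\mathfrak g_{0}$ of the grading $\mathfrak g^{\CC}=\bigoplus_{j}\mathfrak g_{j}$ determined by $P$, while the holomorphic tangent space at $z_{0}$ is identified with $\bigoplus_{j<0}\mathfrak g_{j}$. The subspace $\widetilde D_{z_{0}}$ is $\mathfrak g_{0}$-invariant and hence a direct sum of graded components. Bracket generation forces $\mathfrak g_{-1}\subseteq\widetilde D_{z_{0}}$, because the weight $-1$ part can never be produced by iterated brackets. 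Conversely, any graded component of degree $\le -2$ lying in $\widetilde D_{z_{0}}$ is of ``compact type'' and would yield a $\widetilde D$-tangential nonconstant image of $\CC$, contradicting hyperbolicity; this reflects the fact used in Sections 3 and 4 that it is exactly the $\mathfrak g_{-1}$-directions along which the holomorphic sectional curvature is negative. Therefore $\widetilde D_{z_{0}}=\mathfrak g_{-1}$, that is, $\widetilde D=H_{F}$; and the hypothesis that $\widetilde D$ be bracket generating is then automatic, since for any parabolic $P$ the $\mathfrak g_{-1}$-distribution is bracket generating. This completes the proposed argument.
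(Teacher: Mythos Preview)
Your $(\Leftarrow)$ direction matches the paper's. For $(\Rightarrow)$, your strategy diverges substantially and the last paragraph contains concrete errors.

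The paper does not invoke any Tits-type fibration. It first obtains compactness of the isotropy $V$ directly from Kobayashi hyperbolicity (Proposition~\ref{compact isotropy}), and then constructs a grading $\mathfrak g^{\CC}=\bigoplus_{|l|\le k}\mathfrak g_{l}$ \emph{from the data} $(j,D)$: one sets $\mathfrak g_{1}=\{x-\sqrt{-1}jx:x\in\mathfrak g_{1}^{\RR}\}$, where $\mathfrak g_{1}^{\RR}\subset\mathfrak m$ is the subspace corresponding to $D_{x_0}$, and takes successive brackets. By construction $D$ corresponds to $\mathfrak g_{-1}$ in \emph{this} grading, so the identification step you save for last is automatic. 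The real work is (i) to show $\mathfrak g_{\le 0}$ is parabolic, done via a $\theta$-invariance lemma and a dimension count showing the compact real form $G_u$ acts transitively on $G^{\CC}/G_{\le 0}$, and (ii) to show $\mathfrak g_{1}^{\RR}\subset\mathfrak q$, done by observing that a nonzero $\mathfrak k\cap\mathfrak g_{1}^{\RR}$ generates a compact complex homogeneous submanifold tangent to $D$. Your route, by contrast, first looks for a parabolic via an unspecified structure theorem and only afterwards tries to match $\widetilde D$ to the resulting grading; this reversal is precisely what creates the difficulty you flag as ``the main obstacle,'' and the paper's order avoids it entirely.

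Your final paragraph also has two errors. First, $\mathfrak v^{\CC}=\mathfrak g_{0}$ is correct, but $\mathfrak k^{\CC}=\bigoplus_{l\text{ even}}\mathfrak g_{l}$ is strictly larger unless the flag domain is Hermitian symmetric, so $\mathfrak v^{\CC}\ne\mathfrak k^{\CC}$ in general. Second, the claim that ``any graded component of degree $\le -2$ is of compact type'' is false for odd degrees: $\mathfrak g_{-3},\mathfrak g_{-5},\ldots\subset\mathfrak q^{\CC}$. What does salvage the step is that a \emph{holomorphic} $\mathfrak g_{0}$-invariant subspace of $\mathfrak g_{-}$ strictly containing $\mathfrak g_{-1}$ must, by $[\mathfrak g_{1},\mathfrak g_{-j}]\subset\mathfrak g_{-j+1}$, contain $\mathfrak g_{-2}\subset\mathfrak k^{\CC}$; the corresponding real subspace then meets $\mathfrak k$ nontrivially and produces the forbidden compact tangent submanifold --- which is exactly the paper's argument that $\mathfrak k\cap\mathfrak g_{1}^{\RR}=\{0\}$.
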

The proof of Theorem \ref{main theorem} 
will be given in Section \ref{homogeneous manifolds}.

The relation between Kobayashi hyperbolicity 
and negative curvature is induced by a generalization of Schwarz's lemma: 
let $f\colon \Delta \rightarrow M$ be a holomorphic mapping,
where $M$ is a complex manifold equipped with a Hermitian metric $g$. 
Suppose that the holomorphic sectional curvature of $M$
is bounded from above by a negative constant. 
Then $f^*g \leq cg_\Delta$, for some positive constant $c$
and the Poincar\'e  metric $g_\Delta$ on $\Delta$.
A generalization of this Schwarz's lemma for holomorphic discs 
tangential to a holomorphic distribution with holomorphic sectional curvature
bounded from above by a negative constant is already well-known 
(for instance see \cite{Car-Mul-Pet}).
In this paper we will consider generalized Schwarz's lemma of Yau given in \cite{Yau}
to obtain the version given in \cite{Car-Mul-Pet}.
Even though we only need Schwarz's lemma given
in Corollary \ref{Alfors-Schwarz lemma for holomorphic distribution}, 
generalized Schwarz's lemma of Yau 
for complex manifolds with holomorphic distributions
will be described for future reference.

Here is the outline of the paper.
In section \ref{definition and properties}, we recall notions related to holomorphic vector fields
and prove Theorem \ref{finiteness}. 
The properties are given without proof  
since they can be obtained through the same proofs used in \cite{Kobayashi}
and in the papers cited.
In section \ref{Schwarz lemma}, we explain the relation between the 
Kobayashi hyperbolicity of $(M,D)$ and the holomorphic sectional curvature
bounded from above by a negative constant.
In section \ref{flag domains}, we describe two examples of 
Kobayashi hyperbolic complex manifolds with HBGDs.
In section \ref{homogeneous}, we prove Theorem \ref{main theorem}.

Throughout this paper, the Roman letters $a,b,c,\ldots$ run from $1$ to $m$, 
the Roman letters $i, j, k,\ldots$ run from $1$ to $d$, 
the Greek letters $\al, \be, \ga, \ldots$ run from $d+1$ to $m$ 
and the Greek letters $\sigma, \mu, \nu, \ldots$ run from $1$ to $n$. 
Let $\mathfrak g,\, \mathfrak v, \, \mathfrak k,\ldots$ denote 
Lie algebras and 
$G,\, V,\, K,\dots$ the Lie groups corresponding to 
$\mathfrak g,\, \mathfrak v, \, \mathfrak k,\ldots$.
Given a subspace $\mathfrak{a}\subset \mathfrak{g}$, 
let $\mathfrak{a}^\mathbb C$ denote its complexification.

\bigskip
{\bf Acknowledgement}
The author would like to thank Jun-muk Hwang for sharing his idea on the subject.
Thanks are also due to Jae-Hyun Hong, Alan Huckleberry and Sung-Yeon Kim
for useful discussions on this paper.
{The author would like to thank anonymous referee for the valuable comments.}
This research was partially supported by “Overseas Research Program for Young Scientists” 
through Korea Institute for Advanced Study (KIAS).

\section{A Kobayashi pseudo-distance for complex manifolds with  holomorphic bracket generating distributions}
\label{definition and properties}
\subsection{Holomorphic vector fields}\label{Holomorphic vector fields}
Let $M$ be a complex manifold 
and $T_M$ its holomorphic tangent bundle.
A holomorphic vector field $X$ on $M$ 
is a holomorphic section of $T_M$. 
Since $T_M$ is naturally isomorphic to the real tangent bundle $TM$, 
we can identify $X$ with a real vector field 
that we continue to denote by $X$. 
A flow $\phi_X$ associated to $X$, 
defined on an open subset $U$ of $\RR \times M$ 
containing $\{0\}\times M$, is given in the following way: 
for $(t,p) \in U$, we set $\phi_X(t,p) := c_p(t)$, 
where $c_p \colon (−a(p),b(p)) \rightarrow M$ 
is the unique maximal solution to the initial value problem
\begin{equation}
\begin{aligned}
\frac{d}{dt}c(t)&=X\circ c_p(t),\\
  c_p(0)&=p. 
  \end{aligned}
\end{equation}
Let $\phi_X(t)p$ denote $\phi_X(t,p)$.
Fix $p_0\in M$ and a sufficiently small $\epsilon>0$.
Then, there is an open neighborhood $\widetilde U$ of $p_0$
such that $p\mapsto \phi_X(t)p$ is holomorphic on $\widetilde U$
and  for each $t\in (-\epsilon, \epsilon)$, the mapping $\phi_X(t)$ is a holomorphic diffeomorphism from $\widetilde U$ onto
its image.
Define the complex flow of $X$ to be
\begin{equation}
\Phi_X(s+it) := \phi_X(s)\circ \phi_{iX}(t).
\end{equation}
Then, for sufficiently small $\epsilon'>0$, the complex flow $\Phi_X$ is a well-defined holomorphic mapping on $\{z\in \mathbb C: |z|< \epsilon'\}$ 
and satisfies $\frac{d}{dz} \Phi_X(z)p = X\circ \Phi_X(z)p$.

\subsection{Piecewise connected horizontal discs}
Let $M$ be a complex manifold of dimension $m$ 
and $D$ a HBGD of rank $d$ on $M$.
Fix $x\in M$. 
Let $X_1,\ldots, X_d$ be holomorphic vector fields 
on a small neighborhood $U$ of $x$ 
such that  $\{ X_1|_y,\ldots, X_d|_y\}$ is a basis of $D_y$ for all $y\in U$.
Choose suitable $X_{i_{j,k}}$ such that 
\begin{eqnarray*}
X_{d+1} &=& [X_{i_{1,1}},X_{i_{1,2}}], \,\,
X_{d+2} = [X_{i_{2,1}},X_{i_{2,2}}], \ldots \\
&&\ldots, X_m = [X_{i_{{m-d,1}}}, \ldots, [X_{i_{m-d,k-1}},X_{i_{{m-d,k}}}], \ldots]
\end{eqnarray*} 
form a basis of $T_yM$ together with $X_1,\ldots,X_d$ at all points $y\in U$.
Then there is a small open neighborhood  $\widetilde U\subset U$ of $x$ 
and $W\subset \CC^m$ such that the holomorphic mapping
$
F\colon W \rightarrow \widetilde{U}
$ defined by 
\begin{equation}
F(t_1,\dots,t_m) = \phi_1(t_1)\circ\dots \circ\phi_m(t_m)x
\end{equation} is a holomorphic diffeomorphism.
Here $\phi_1,\ldots, \phi_m$ are the complex flows associated to
the vector fields $X_1,\ldots,X_m$ described in Section \ref{Holomorphic vector fields}.
In particular, for each $j$ with $d+1 \leq j \leq m$, the complex flow 
$\phi_j$ is a finite composition of $\phi_i$ and $\phi_i^{-1}$ with $1 \leq i \leq d$.
Hence, for fixed $t_1,\ldots, t_m$, there are holomorphic discs connecting $x$ and $\phi_1(t_1)\circ \cdots\circ \phi_m(t_m)x$.
Note that $$ \frac{dF}{dt_i}(t_1,\dots, t_m) = d\phi_1(t_1)\circ \dots \circ d\phi_{i-1}(t_{i-1})X_i|_{\phi_{i+1}(t_{i+1})\circ \dots \circ\phi_m(t_m)x} $$
and in particular $dF|_0 = id$. 
This implies that there is an open neighborhood of $x$ such that any two points can be connected 
by finite number of holomorphic discs tangential to $D$.
Using standard methods, it is possible to show that local connectivity implies global connectivity.
So, if $M$ is connected, 
every two points in $M$ can be connected by a chain of holomorphic discs that are tangential to $D$.
That is for any $x, \, y \in M$, we can take a finite set $\{x_0,\ldots, x_N\}\subset M$ 
and a set $\{a_1,\ldots,a_N\}\subset \Delta$ with $x=x_0$ and $y=x_N$ such that there are holomorphic discs $f_j\colon \Delta \rightarrow M $ tangential to $D$ for each $j=1,\ldots, N$ with $f_j(0)=x_{j-1}$ and $f_j(a_j)=x_j$.
Hence this completes the proof of Theorem \ref{finiteness}.

\subsection{Properties of the Kobayashi pseudo-distance and the Kobayashi pseudo-metrics for ${\bf (M,D)}$}
In this section, selected properties of the Kobayashi pseudo-distance given in Definition \ref{definition of the Kobayashi distance} will be presented without proof.
Here $M$, $M_1$, and $M_2$ are complex manifolds equipped with HBGDs $D$, $D_1$ and $D_2$, respectively.

\begin{prop}\label{Kobayashi properties}
$\hbox{}$
\begin{enumerate}
\item The Kobayashi pseudo-distance  satisfies the triangle inequality.
\item Let $f$ be a holomorphic mapping from $(M_1,D_1)$ to $(M_2, D_2)$ such that $df(D_1)\subset D_2$. Then for any $x,\,y \in M_1$ and $v\in D_1$ one has
\begin{equation}
\begin{aligned}
d_{M_2,D_2}(f(x), f(y)) &\leq d_{M_1,D_1}(x,y),\\
k_{M_2,D_2}(df(v))&\leq k_{M_1,D_1}(v).
\end{aligned}
\end{equation}
\end{enumerate}
\end{prop}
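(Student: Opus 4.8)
The plan is to derive both statements directly from the definitions, exactly as in the classical case treated in \cite{Kobayashi}; no new idea beyond the chain rule and the tangency hypothesis is needed, the distribution simply being carried along.

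For part (1), fix $x,y,z\in M$. Given any admissible chain $x=x_0,\dots,x_N=y$ and any admissible chain $y=y_0,\dots,y_L=z$, their concatenation $x=x_0,\dots,x_N=y_0,\dots,y_L=z$ is an admissible chain from $x$ to $z$ whose associated sum of $\delta_{M,D}$-terms equals the sum of the two original sums. Taking the infimum over all such pairs of chains yields $d_{M,D}(x,z)\le d_{M,D}(x,y)+d_{M,D}(y,z)$. Finiteness of all the quantities involved is guaranteed by Theorem \ref{finiteness}.

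For part (2), first I would treat $\delta$. Let $h\colon\Delta\to M_1$ be holomorphic and tangential to $D_1$ with $h(a)=x$, $h(b)=y$. By the chain rule $d(f\circ h)=df\circ dh$, so $d(f\circ h)(T_\Delta)\subset df(D_1)\subset D_2$; thus $f\circ h\colon\Delta\to M_2$ is holomorphic, tangential to $D_2$, and carries $a\mapsto f(x)$, $b\mapsto f(y)$. Hence $\delta_{M_2,D_2}(f(x),f(y))\le d_\Delta(a,b)$, and taking the infimum over all such $h$ gives $\delta_{M_2,D_2}(f(x),f(y))\le\delta_{M_1,D_1}(x,y)$. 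For any admissible chain $x=x_0,\dots,x_N=y$ in $M_1$, the image chain $f(x_0),\dots,f(x_N)$ joins $f(x)$ to $f(y)$ in $M_2$, so $d_{M_2,D_2}(f(x),f(y))\le\sum_{j=1}^N\delta_{M_2,D_2}(f(x_{j-1}),f(x_j))\le\sum_{j=1}^N\delta_{M_1,D_1}(x_{j-1},x_j)$; taking the infimum over chains proves the distance inequality. The infinitesimal inequality is the same argument based at $0$: if $\varphi\colon\Delta\to M_1$ is tangential to $D_1$ with $\varphi(0)=x$ and $\lambda\,d\varphi(0)=v$, then $f\circ\varphi$ is tangential to $D_2$ with $(f\circ\varphi)(0)=f(x)$ and $\lambda\,d(f\circ\varphi)(0)=df(\lambda\,d\varphi(0))=df(v)$, whence $k_{M_2,D_2}(df(v))\le\lambda$; taking the infimum over admissible pairs $(\varphi,\lambda)$ yields $k_{M_2,D_2}(df(v))\le k_{M_1,D_1}(v)$.

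The only place where the hypotheses genuinely enter is the assertion that $f\circ h$ (resp. $f\circ\varphi$) is again tangential to the target distribution, which is precisely where $df(D_1)\subset D_2$ is used; everything else is formal manipulation of infima. Accordingly, I do not expect any real obstacle here: the argument is a verbatim adaptation of the corresponding proofs in \cite{Kobayashi}, with tangency propagated through the chain rule.
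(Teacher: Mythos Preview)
Your proposal is correct and is precisely the standard argument from \cite{Kobayashi} adapted to the distribution setting, which is exactly what the paper intends: the proposition is stated there \emph{without proof}, with the explicit remark that the proofs are identical to those in \cite{Kobayashi}. There is nothing to add.
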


\begin{prop}\textup{(Demailly} \cite[ Proposition 1.5.]{Demailly}\textup{)}
$k_{M,D}$ is upper semicontinuous on the total space of $D$. If $M$ is compact,
$(M,D)$ is infinitesimally Kobayashi hyperbolic if and only if there are no non-constant entire curves
$f \colon \mathbb C \rightarrow M$ tangential to $D$. In that case, $k_{M,D}$ is a continuous (and positive definite) Finsler metric on $D$.
\end{prop}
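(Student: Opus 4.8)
The plan is to transpose Royden's proof of upper semicontinuity of the Kobayashi--Royden metric, carrying the tangency constraint throughout, and then to run a Brody reparametrization argument for the compact case. For upper semicontinuity of $k_{M,D}$ on the total space $E$ of $D$, fix $(x_{0},v_{0})\in E$ and $\lambda>k_{M,D}(v_{0})$. By definition there is a holomorphic disc $g\colon\Delta\to M$ tangential to $D$ with $g(0)=x_{0}$ and $\mu\,g'(0)=v_{0}$ for some $\mu<\lambda$, and after replacing $g$ by $\zeta\mapsto g(r\zeta)$ with $r<1$ close to $1$ (and enlarging $\mu$ slightly, still below $\lambda$) we may take $g$ holomorphic on a neighbourhood of $\overline{\Delta}$. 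I would then produce a holomorphic family $(\zeta,w)\mapsto g_{w}(\zeta)$, with $w$ in a neighbourhood of $0$ in some $\mathbb C^{N}$, such that $g_{0}=g$, each $g_{w}\colon\Delta\to M$ is tangential to $D$, and the $1$-jet evaluation $w\mapsto\bigl(g_{w}(0),\,\mu\,g_{w}'(0)\bigr)\in E$ is a submersion at $w=0$. An open submersion carries a neighbourhood of $0$ onto a neighbourhood $\mathcal O$ of $(x_{0},v_{0})$ in $E$, and for $(x,v)\in\mathcal O$ the corresponding disc witnesses $k_{M,D}(v)\le\mu<\lambda$, proving $k_{M,D}<\lambda$ near $(x_{0},v_{0})$. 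To construct the family one linearizes: the admissible first-order deformations of $g$ are the holomorphic sections $\xi$ of $g^{*}T_{M}$ over $\overline{\Delta}$ subject to the first-order linear constraint expressing that $g_{t}$ stays tangential to $D$ to first order in $t$; since $\Delta$ is Stein and contractible, $g^{*}T_{M}$ is holomorphically trivial over it, so this constrained system has an abundant solution space on which $\xi(0)$ and, within the remaining freedom, the $D$-component of $\xi'(0)$ may be prescribed independently --- $m$ plus $d$ parameters, matching $\dim E$. Integrating finitely many such first-order deformations (built from the complex flows of \S\ref{definition and properties}) to honest families of tangential discs and checking that the differential of the $1$-jet map is onto at $w=0$ finishes this step.

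Next, the equivalence for compact $M$. If $M$ admits a non-constant entire curve $h\colon\mathbb C\to M$ tangential to $D$, pick $z_{0}$ with $h'(z_{0})\ne 0$; since $k_{\mathbb C}\equiv 0$ and $k$ is decreasing under holomorphic maps tangential to $D$ (Proposition~\ref{Kobayashi properties}), $k_{M,D}\bigl(h'(z_{0})\bigr)=0$, so $(M,D)$ is not infinitesimally Kobayashi hyperbolic, and this direction requires no compactness. Conversely, suppose $M$ is compact and $(M,D)$ is not infinitesimally Kobayashi hyperbolic; fixing a Hermitian metric $g$ on $D$, this forces $\inf\{k_{M,D}(v):v\in D,\ |v|_{g}=1\}=0$, because a failure of positive definiteness on a fibre or of the uniform lower bound, using the homogeneity $k_{M,D}(\zeta v)=|\zeta|\,k_{M,D}(v)$ and the compactness of $M$, makes this infimum vanish. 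So pick $v_{j}$ with $|v_{j}|_{g}=1$ and $k_{M,D}(v_{j})\to 0$ and, from the definition of $k_{M,D}$, tangential discs $f_{j}\colon\Delta\to M$ with $f_{j}(0)=x_{j}$ and $|f_{j}'(0)|_{g}\to\infty$. Brody's reparametrization lemma, valid since $M$ is compact, yields reparametrizations $\tilde f_{j}=f_{j}\circ\psi_{j}$ defined on discs of radii tending to $\infty$, with $|\tilde f_{j}'(0)|_{g}=1$ and uniformly bounded derivatives; each $\tilde f_{j}$ is still tangential to $D$ (composition with a holomorphic map preserves tangency), so a normal-families argument gives a subsequence converging locally uniformly to a non-constant entire curve $\mathbb C\to M$, which is tangential to $D$ because tangency is closed under locally uniform limits --- a contradiction.

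Finally, continuity on compact $M$: the unit sphere bundle $S\subset D$ of $(D,g)$ is compact, $k_{M,D}$ is positive on $S$ by the previous paragraph and upper semicontinuous, hence bounded below by some $\epsilon>0$ on $S$; by homogeneity $k_{M,D}(v)\ge\epsilon\,|v|_{g}$ for all $v$, so $k_{M,D}$ is finite, positive off the zero section, and a Finsler pseudo-metric. Lower semicontinuity is the same normal-families argument run in reverse: if $(x_{j},v_{j})\to(x_{0},v_{0})$ in $E$ with $\limsup_{j}k_{M,D}(v_{j})<k_{M,D}(v_{0})$ (so $v_{0}\ne 0$), near-optimal tangential discs $f_{j}\colon\Delta_{R_{j}}\to M$ with $f_{j}(0)=x_{j}$, $f_{j}'(0)=v_{j}$ have radii $R_{j}$ bounded below (as $k_{M,D}(v_{j})$ stays bounded) and above (by the positive lower bound just obtained); the family of tangential discs $\Delta_{R}\to M$ with $R$ in a bounded range is normal --- a failure of normality would again produce an entire tangential curve --- so a subsequence converges to a tangential disc through $x_{0}$ with derivative $v_{0}$, yielding $k_{M,D}(v_{0})\le\limsup_{j}k_{M,D}(v_{j})$, a contradiction. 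Together with upper semicontinuity this gives continuity, so $k_{M,D}$ is a continuous positive-definite Finsler metric on $D$. The main obstacle is the tangency-preserving deformation underpinning upper semicontinuity --- setting up the linearized equation and solving it over the disc with prescribed $1$-jet --- while the compact-case and continuity parts are a routine transcription of the $D=T_{M}$ theory.
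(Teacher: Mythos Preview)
The paper does not prove this proposition: it is stated in Section~\ref{definition and properties} among ``selected properties \dots\ presented without proof'' and attributed to Demailly's notes. So there is no argument in the paper to compare yours against. Your outline --- a Royden-type perturbation for upper semicontinuity, Brody reparametrization for the compact equivalence, and a normal-families extraction for lower semicontinuity --- is exactly the expected adaptation of the classical $D=T_M$ proofs and is presumably what Demailly does.

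The one step that needs more care is the one you yourself flag: producing an honest holomorphic family of \emph{tangential} discs with submersive $1$-jet map. Your phrase ``built from the complex flows of \S\ref{definition and properties}'' is problematic if read literally: composing $g$ with the complex flow $\Phi_{X_i}(t)$ of a local section $X_i$ of $D$ does \emph{not} yield a tangential disc, because $\Phi_{X_i}(t)$ preserves $D$ only when $[X_i,D]\subset D$, which is precisely what fails for a bracket-generating $D$. A clean construction is to realise tangential discs as solutions of a holomorphic ODE. After shrinking so that $g$ extends past $\overline{\Delta}$, trivialise $\mathrm{pr}_M^{\,*}D$ holomorphically over a Stein neighbourhood $\Omega$ of the graph of $g$ in $\Delta\times M$ (Siu's theorem plus Grauert's Oka principle) by a frame $X_1,\dots,X_d$, write $g'(\zeta)=\sum_i a_i(\zeta)\,X_i(\zeta,g(\zeta))$, and for parameters $(y,c)\in M\times\mathbb C^{d}$ near $(x_0,0)$ solve
\[
h'(\zeta)=\sum_i\bigl(a_i(\zeta)+c_i\bigr)\,X_i(\zeta,h(\zeta)),\qquad h(0)=y.
\]
For small $(y,c)$ the solution exists on all of $\Delta$, stays tangential to $D$ by construction, depends holomorphically on $(y,c)$, and the $1$-jet map $(y,c)\mapsto\bigl(y,\sum_i(a_i(0)+c_i)X_i(0,y)\bigr)$ is visibly a submersion onto a neighbourhood of $(x_0,g'(0))$ in the total space of $D$. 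With this in place your upper-semicontinuity argument goes through; the Brody and continuity portions are correct as written.
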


\begin{prop}[cf. \cite{Royden, Abate}]
For $x,y \in M$ we have
\begin{equation} 
d_{M,D}(x,y) = \inf_\gamma \int k_{M,D}(\gamma'(t))dt,
\end{equation}
where the infimum is taken over all piecewise differentiable curve $\gamma \colon [0,1]\rightarrow M$ such that $\gamma'(t)\in D$ for all $t\in [0,1]$ and $\gamma(0)=x,\, \gamma(1)=y$.
\end{prop}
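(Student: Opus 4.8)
The plan is to prove the two inequalities separately. Write $\hat d_{M,D}(x,y) := \inf_\gamma \int_0^1 k_{M,D}(\gamma'(t))\,dt$ for the integrated metric, the infimum being over piecewise $C^1$ horizontal curves from $x$ to $y$; by Theorem \ref{finiteness} and the local connectivity of Section \ref{definition and properties} this family is nonempty, so $\hat d_{M,D}$ is well defined. The argument follows the scheme of Royden and Abate \cite{Royden, Abate}, the essential new point being the replacement of the classical Lipschitz correction of accumulated errors by a single deferred correction justified through the bracket-generating connectivity of Section \ref{definition and properties}.

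First I would establish $\hat d_{M,D}\le d_{M,D}$, the soft direction. Given a chain of discs $f_j\colon\Delta\to M$ tangential to $D$ with $f_j(0)=x_{j-1}$, $f_j(a_j)=x_j$, $x_0=x$, $x_N=y$, whose total Poincar\'e length $\sum_j d_\Delta(0,a_j)$ is within $\epsilon$ of $d_{M,D}(x,y)$, let $\sigma_j$ be the Poincar\'e geodesic from $0$ to $a_j$ in $\Delta$, parametrized by arclength. Since $f_j$ is tangential to $D$, each image curve $f_j\circ\sigma_j$ has velocity in $D$, so their concatenation is a piecewise $C^1$ horizontal curve $\gamma$ from $x$ to $y$. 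Applying the infinitesimal distance-decreasing property of Proposition \ref{Kobayashi properties}(2) to $f_j\colon(\Delta,T_\Delta)\to(M,D)$ gives $k_{M,D}\bigl(df_j(\sigma_j')\bigr)\le k_\Delta(\sigma_j')=1$, and integrating yields $\int k_{M,D}\bigl((f_j\circ\sigma_j)'\bigr)\,ds\le d_\Delta(0,a_j)$. Summing over $j$ and taking the infimum proves $\hat d_{M,D}(x,y)\le d_{M,D}(x,y)$.

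The reverse inequality $d_{M,D}\le\hat d_{M,D}$ is the substantive direction. Fix a horizontal curve $\gamma$ with $\int_0^1 k_{M,D}(\gamma')<\hat d_{M,D}+\epsilon$ and a partition $0=t_0<\dots<t_N=1$ of mesh $h$. I would build a \emph{connected} chain of discs inductively, anchoring each disc at the actual endpoint of the previous one rather than at the target $\gamma(t_j)$: starting from $p_0=x$, choose via a local holomorphic frame of $D$ a vector $\tilde v_{j-1}\in D_{p_{j-1}}$ approximating $\gamma'(t_{j-1})$, invoke the definition of $k_{M,D}(\tilde v_{j-1})$ to obtain a disc $f_j$ tangential to $D$ with $f_j(0)=p_{j-1}$ and $\lambda_j\,df_j(0)=\tilde v_{j-1}$, $\lambda_j\le k_{M,D}(\tilde v_{j-1})+\epsilon'$, and set $p_j=f_j(\lambda_j h)$. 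The deviation $\delta_j:=p_j-\gamma(t_j)$ then obeys, in local coordinates, an Euler-type recursion $\delta_j=(1+O(h))\delta_{j-1}+O(h^2)$, so by a Gr\"onwall estimate $|\delta_N|=O(h)$ and $p_N\to y$ as $h\to0$. The total Poincar\'e length $\sum_j d_\Delta(0,\lambda_j h)$ tends to $\int_0^1 k_{M,D}(\gamma')\,dt$; here I would use that $t\mapsto k_{M,D}(\gamma'(t))$ is upper semicontinuous (Demailly's proposition above) so that the left-endpoint Riemann sums overshoot the integral by at most $\epsilon$. Finally, connecting $p_N$ to $y$ by the local horizontal chain of Section \ref{definition and properties} costs $d_{M,D}(p_N,y)\to0$, since the flow-box map $F$ with $dF|_0=\mathrm{id}$ shows that $d_{M,D}(\cdot,y)$ is continuous at $y$. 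Combining the three estimates gives $d_{M,D}(x,y)\le\int_0^1 k_{M,D}(\gamma')+(\text{error}\to0)\le\hat d_{M,D}(x,y)+\epsilon$.

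The main obstacle is exactly the control of the accumulated deviation in the second direction. Per step the disc $f_j$ overshoots $\gamma(t_j)$ by an $O(h^2)$ term whose direction is in general \emph{not} tangent to $D$ but points into a bracket direction; a naive per-step correction would, by the ball-box scaling of sub-Riemannian distances, cost $O\bigl((h^2)^{1/2}\bigr)=O(h)$ each and hence $O(1)$ in total, which would break the argument. The resolution is to never correct per step: by anchoring successive discs at the true endpoints $p_{j-1}$ one keeps $|\delta_N|=O(h)$ globally, and a \emph{single} final correction of a displacement of size $O(h)$ into bracket directions costs only $O(\sqrt h)\to0$ by the flow-box estimates of Section \ref{definition and properties}. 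Thus the bracket-generating hypothesis enters twice: it guarantees (via Theorem \ref{finiteness}) that $\hat d_{M,D}$ is finite, and it supplies the continuity of $d_{M,D}$ that absorbs the terminal error. The remaining points, namely the upper-semicontinuity bookkeeping for the Riemann sums and the reduction of a piecewise $C^1$ curve to its smooth pieces, are routine.
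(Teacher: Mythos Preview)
The paper does not supply its own proof of this proposition; Section~2 announces that these properties ``will be presented without proof'' and can be obtained by the arguments in \cite{Kobayashi} and the cited references \cite{Royden, Abate}. So there is no proof in the paper to compare against, and I evaluate your argument on its merits.

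Your easy direction $\hat d_{M,D}\le d_{M,D}$ is correct and standard. The hard direction contains a genuine gap. When you assert that the drift obeys $\delta_j=(1+O(h))\delta_{j-1}+O(h^2)$, the $O(h^2)$ contribution hides the Taylor remainder $R_j=f_j(\lambda_j h)-f_j(0)-\lambda_j h\,f_j'(0)$ of the disc $f_j$, and bounding this by $C h^2$ uniformly in $j$ requires a uniform bound on $f_j''$ near the origin. The definition of $k_{M,D}$ is a bare infimum and imposes no such control: already for $M=\mathbb C$ the discs $z\mapsto z+Nz^2$ all have $f'(0)=1$ while $|f''(0)|=2N$ is arbitrary. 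Without this bound the Gr\"onwall estimate collapses, $|\delta_N|$ need not tend to $0$, and your terminal correction (whose cost you correctly estimate as $O(|\delta_N|^{1/k})$ via the flow-box map) no longer helps.

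The repair is to refrain from choosing each $f_j$ independently near-optimal and instead draw all the discs from a uniformly $C^2$-bounded family. Such a family is produced in Demailly's upper-semicontinuity argument: a fixed near-optimal disc $f_0$ at $v_0\in D$ is perturbed, via the horizontal ODE $g'=\sum a_i(z)X_i(g)$ with varying initial data, to a holomorphic family $g_{p,v}$ of horizontal discs through nearby $(p,v)\in D$, all with $\lambda\le k_{M,D}(v_0)+\epsilon$ and with derivatives uniformly close to those of $f_0$. Covering the compact arc $\gamma'([0,1])\subset D$ by finitely many such families restores your uniform $O(h^2)$, after which your anchored Euler scheme with a single deferred correction is valid. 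This is the sub-Riemannian analogue of Royden's ``thickening'' of a near-optimal disc to a coordinate chart, and once phrased this way your proof and the classical local argument of \cite{Royden, Abate} coincide. A minor point: your appeal to upper semicontinuity for the Riemann sums is justifiable---for an upper semicontinuous integrand the upper Darboux integral equals the Lebesgue integral, whence $\limsup_h\sum_j k_{M,D}(\gamma'(t_{j-1}))\,h\le\int_0^1 k_{M,D}(\gamma')$---but it is not literally convergence of Riemann sums and should be stated as such.
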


\begin{cor}
\hfill
\begin{enumerate}
\item
$(M,D)$ is Kobayashi hyperbolic if and only if $(M,D)$ is infinitesimally Kobayashi hyperbolic.
\item
The Kobayashi distance $d_{M,D}$ is continuous.
\end{enumerate}
\end{cor}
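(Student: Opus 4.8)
The plan is to prove the two assertions in the order (2) then (1), since the nontrivial implication in (1) will use the continuity proved in (2). Fix once and for all a smooth Hermitian metric $g$ on $D$ and a Riemannian metric $\tilde g$ on $M$; on any compact subset of $M$ the corresponding norms on $D$ are comparable. I will use repeatedly that, by Proposition \ref{Kobayashi properties}(2), a holomorphic map $f$ from $(N,T_N)$ to $(M,D)$ with $df(T_N)\subset D$ is distance-decreasing from $d_N$ to $d_{M,D}$; in particular this applies to holomorphic discs $\Delta\to M$ tangential to $D$ and, crucially, to entire curves $\CC\to M$ tangential to $D$, for which $d_\CC\equiv 0$. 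I will also use the integral formula $d_{M,D}(x,y)=\inf_\gamma\int k_{M,D}(\gamma'(t))\,dt$ over piecewise-$C^1$ horizontal curves, and the upper semicontinuity of $k_{M,D}$.

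\emph{Continuity of $d_{M,D}$.} The triangle inequality gives $|d_{M,D}(x,y)-d_{M,D}(x',y')|\le d_{M,D}(x,x')+d_{M,D}(y,y')$, so it suffices to show $d_{M,D}(x,x_n)\to 0$ whenever $x_n\to x$ in the manifold topology. This is the quantitative form of the local connectivity established in the proof of Theorem \ref{finiteness}: in the coordinates $F\colon W\to\widetilde U$ with $F(0)=x$, write $x_n=F(t^{(n)})$ with $t^{(n)}\to 0$; the construction there joins $x$ to $F(t^{(n)})$ through a chain of a fixed number $N$ of holomorphic discs tangential to $D$ whose chord parameters $a_1^{(n)},\dots,a_N^{(n)}\in\Delta$ all tend to $0$ as $t^{(n)}\to 0$, so $d_{M,D}(x,x_n)\le\sum_{j=1}^N d_\Delta(0,a_j^{(n)})\to 0$. (Alternatively: by upper semicontinuity $k_{M,D}$ is bounded above on the unit sphere bundle of $D$ over a compact coordinate ball containing $x$, and one joins $x$ to $x_n$ by a short horizontal curve inside that ball using Chow's theorem, then applies the integral formula.)

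\emph{Infinitesimally hyperbolic $\Rightarrow$ hyperbolic.} Given $x\neq y$, I would choose a compact coordinate ball $\overline B$ with $x\in B$ and $y\notin\overline B$. By hypothesis there is $\epsilon>0$ with $k_{M,D}(v)\ge\epsilon\,|v|_g$ on $D|_{\overline B}$, and there is $c>0$ with $|v|_g\ge c\,|v|_{\tilde g}$ on $D|_{\overline B}$. Any piecewise-$C^1$ horizontal curve $\gamma$ from $x$ to $y$ first leaves $\overline B$ at some $\tau\in(0,1]$ with $\gamma(\tau)\in\partial B$, so the arc $\gamma|_{[0,\tau]}$ has $\tilde g$-length at least $\operatorname{dist}_{\tilde g}(x,\partial B)=:\delta>0$; hence $\int k_{M,D}(\gamma'(t))\,dt\ge\epsilon c\,\delta$, and the integral formula gives $d_{M,D}(x,y)\ge\epsilon c\,\delta>0$. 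Thus $d_{M,D}$ is a distance.

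\emph{Hyperbolic $\Rightarrow$ infinitesimally hyperbolic; the main obstacle.} Here I would argue by contraposition, following the classical Royden--Barth scheme. If the locally uniform lower bound fails over some compact $K$, there are $x_n\to x_*\in K$, vectors $v_n\in D_{x_n}$ with $|v_n|_g=1$, and discs $f_n\colon\Delta\to M$ tangential to $D$ with $f_n(0)=x_n$ and $df_n(0)=\mu_n v_n$, $\mu_n\to\infty$; rescaling, $g_n(\zeta):=f_n(\zeta/\mu_n)$ is tangential to $D$ on the disc of radius $\mu_n$ with $g_n(0)=x_n$ and $dg_n(0)=v_n$. If a subsequence of $(g_n)$ converges, uniformly on compact subsets of $\CC$, to a \emph{nonconstant} holomorphic $g\colon\CC\to M$ tangential to $D$, then $d_{M,D}(g(a),g(b))\le d_\CC(a,b)=0$ for all $a,b$ while $g$ is nonconstant, contradicting that $d_{M,D}$ is a distance; and positive definiteness of $k_{M,D}$ on a fibre $D_x$ is the special case of the lower bound for the compact set $\{x\}$. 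The genuine difficulty is this extraction when $M$ is not compact, since a priori the $g_n$ could escape to infinity. I expect to handle it as in the scalar theory (cf. \cite{Kobayashi}): a Barth-type localization — using that $d_{M,D}$ is now a genuine distance and is continuous by (2) — shows that $d_{M,D}$ induces the manifold topology, which confines the relevant images to a relatively compact coordinate neighbourhood of $x_*$; then Brody's reparametrization lemma together with Montel's theorem supply the nonconstant entire curve. The only point special to the distribution is that tangency to $D$ is a closed condition — the total space of $D$ is a closed complex submanifold of $T_M$ — so it is inherited by locally uniform limits, which is exactly what allows the classical argument to carry over.
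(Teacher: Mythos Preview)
The paper does not prove this corollary; it states at the head of the subsection that these properties ``can be obtained through the same proofs used in \cite{Kobayashi} and in the papers cited,'' so the benchmark is the classical Royden--Barth argument, and your plan follows it correctly. One simplification in the last step is worth noting.

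For ``hyperbolic $\Rightarrow$ infinitesimally hyperbolic,'' once your Barth-type localization is in place you need neither Brody's reparametrization nor the extraction of an entire curve. Take nested coordinate polydiscs $P'\Subset P$ around $x_*$. For any horizontal disc $f\colon\Delta\to M$ with $f(0)\in\overline{P'}$, let $\rho\in(0,1]$ be the largest radius with $f(\rho\Delta)\subset P$; if $\rho<1$ there is $\zeta_0$ with $|\zeta_0|=\rho$ and $f(\zeta_0)\in\partial P$, whence
\[
d_\Delta(0,\rho)\ \ge\ d_{M,D}\big(f(0),f(\zeta_0)\big)\ \ge\ \min_{\overline{P'}\times\partial P} d_{M,D}\ =:\ \delta_0\ >\ 0,
\]
the last inequality using exactly your part~(2) together with the hypothesis that $d_{M,D}$ is a genuine distance. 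Hence $f|_{\tanh(\delta_0)\Delta}$ lands in the bounded chart $P\cong\Delta^m$, and Cauchy's estimate bounds $|df(0)|$ uniformly over $f(0)\in\overline{P'}$, contradicting $\mu_n\to\infty$ directly. Your rescaled $g_n$ together with Montel would also conclude --- the localization forces $g_n(R\Delta)\subset P$ for every fixed $R$ once $n$ is large, and the limit is a nonconstant entire horizontal curve --- but the full Brody reparametrization with moving centers is the device for the \emph{compact} case without such localization (precisely Demailly's proposition quoted just above the corollary), and plays no role here.
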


Suppose that $(M,D)$ is Kobayashi hyperbolic.
Then the following theorem of van Dantzig and van
der Waerden in \cite{Dantzig_Waerden_1928} applies:
{\it let $X$ be a connected locally compact metric space.
Then the isometry group of $X$ is locally compact in the compact-open topology
and the isotropy subgroup at any point of $X$ is compact.
Furthermore if $X$ is compact, then the isometry group of $X$ is also compact.}
Hence the isometry group of $M$ with respect to $d_{M,D}$ is locally compact.
By a theorem of Bochner and Montgomery in \cite[Theorem A]{Bochner_Montgomery_1945}
we obtain the following:
\begin{prop}\label{compact isotropy}
Suppose that $(M,D)$ is Kobayashi hyperbolic.
Then the set of holomorphic diffeomorphisms of $M$ preserving $D$, say $\text{Aut}(M,D)$, 
is a Lie group and at any point in $M$, 
the isotropy subgroup of $\text{Aut}(M,D)$ is compact.
\end{prop}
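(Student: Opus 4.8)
The plan is to deduce Proposition \ref{compact isotropy} by combining the two classical results quoted immediately before it with the properties of $d_{M,D}$ already established. The key observation is that $\operatorname{Aut}(M,D)$ acts by isometries of the metric space $(M,d_{M,D})$: if $f\colon M\to M$ is a biholomorphism with $df(D)=D$, then applying Proposition \ref{Kobayashi properties}(2) to both $f$ and $f^{-1}$ gives $d_{M,D}(f(x),f(y))=d_{M,D}(x,y)$ for all $x,y$. Hence there is a natural injective homomorphism $\operatorname{Aut}(M,D)\hookrightarrow \operatorname{Isom}(M,d_{M,D})$.

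First I would record that, since $(M,D)$ is assumed Kobayashi hyperbolic, $d_{M,D}$ is a genuine distance; moreover by the Corollary preceding the Proposition, $d_{M,D}$ is continuous, and one checks it induces the manifold topology (the standard argument: $d_{M,D}$ dominates $d_M$, which is continuous, giving one inclusion of topologies, and continuity of $d_{M,D}$ gives the other). Thus $(M,d_{M,D})$ is a connected, locally compact metric space. Then the van Dantzig--van der Waerden theorem applies: $\operatorname{Isom}(M,d_{M,D})$ is locally compact in the compact-open topology, and each isotropy subgroup $\operatorname{Isom}(M,d_{M,D})_x$ is compact. Restricting to the subgroup $\operatorname{Aut}(M,D)$, one needs to know it is a \emph{closed} subgroup of the isometry group in the compact-open topology; this follows because a compact-open limit of biholomorphisms preserving $D$ is again holomorphic (by the usual normal-families argument, as in \cite{Kobayashi}) and preserves $D$ by continuity of the limit on the total space of $D$. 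A closed subgroup of a locally compact group is locally compact, and the isotropy subgroup $\operatorname{Aut}(M,D)_x = \operatorname{Aut}(M,D)\cap \operatorname{Isom}(M,d_{M,D})_x$ is closed in a compact group, hence compact.

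Next I would upgrade ``locally compact topological transformation group acting effectively by biholomorphisms on a complex manifold'' to ``Lie group'' by invoking the Bochner--Montgomery theorem \cite[Theorem A]{Bochner_Montgomery_1945}, exactly as the excerpt signals: a locally compact group acting effectively and continuously by holomorphic (in particular $C^\infty$ and real-analytic) transformations on a manifold is a Lie group acting smoothly. Applying this to the locally compact group $\operatorname{Aut}(M,D)$ acting on $M$ yields the Lie group structure, and compactness of the isotropy subgroups was already obtained in the previous step.

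\emph{The main obstacle} I anticipate is not any single deep step but the verification that $\operatorname{Aut}(M,D)$ is closed in $\operatorname{Isom}(M,d_{M,D})$ and that the compact-open topology it inherits is the ``right'' one for Bochner--Montgomery to apply — i.e.\ reconciling the metric-space isometry topology with the topology of uniform convergence on compacta of holomorphic maps. This is handled by the normal families / Montel-type argument (a locally bounded sequence of biholomorphisms has a subsequence converging uniformly on compacta, and the distance-decreasing property forces the limits of $f_n$ and $f_n^{-1}$ to be mutually inverse biholomorphisms), together with the fact that the condition $df(D)\subset D$ is a closed condition under such convergence since $D$ is a holomorphic subbundle and convergence is in the $C^\infty$ (indeed holomorphic) sense. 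None of this requires the bracket-generating hypothesis beyond what is needed for $d_{M,D}$ to be finite and continuous, which is already in hand.
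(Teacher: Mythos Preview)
Your proposal is correct and follows exactly the route the paper indicates: the paper gives no explicit proof, but simply invokes van Dantzig--van der Waerden (to get local compactness of $\operatorname{Isom}(M,d_{M,D})$ and compactness of its isotropy subgroups) and then Bochner--Montgomery (to upgrade to a Lie group), which is precisely your argument. You supply more detail than the paper does---the closedness of $\operatorname{Aut}(M,D)$ inside the isometry group and the verification that $d_{M,D}$ induces the manifold topology---but the underlying strategy is identical.
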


\begin{cor}\label{No complex action}
Suppose that $(M,D)$ is a Kobayashi hyperbolic $G$-homogeneous complex manifold 
with an invariant distribution $D$. Then $G$ is not a complex Lie group.
\end{cor}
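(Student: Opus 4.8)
The plan is to prove the contrapositive by producing an obstruction: if $G$ is a complex Lie group, then $(M,D)$ carries a non-constant entire holomorphic curve tangential to $D$, which is incompatible with $d_{M,D}$ being a distance. Since $M$ is connected, we may pass to the identity component and assume $G$ is connected, and we may assume $\operatorname{rank} D\ge 1$ (if $\operatorname{rank} D=0$, every holomorphic disc tangential to $D$ is constant, so $d_{M,D}$ is identically $+\infty$ off the diagonal and cannot be a distance on a positive-dimensional $M$).

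So suppose $G$ is complex. Fix $x\in M$. By transitivity the evaluation $\mathfrak g\to T_xM$, $X\mapsto \xi_X(x)$, is onto, where $\xi_X$ denotes the fundamental vector field of $X$; since $D_x\ne 0$ we may pick $X\in\mathfrak g$ with $0\ne\xi_X(x)\in D_x$ (in particular $X\ne0$). Because $G$ is a complex Lie group, $z\mapsto\exp(zX)$ is a holomorphic one-parameter subgroup $\CC\to G$, and composing it with the holomorphic orbit map $g\mapsto g\cdot x$ gives a holomorphic curve $\Gamma\colon\CC\to M$, $\Gamma(z)=\exp(zX)\cdot x$, with $\Gamma'(0)=\xi_X(x)\ne0$; thus $\Gamma$ is non-constant.

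The crucial step is to verify that $\Gamma$ is tangential to $D$, i.e. $\Gamma'(z)\in D_{\Gamma(z)}$ for all $z$. Here I would use $\Gamma'(z)=\xi_X(\Gamma(z))$ together with the equivariance $\xi_X(g\cdot p)=dg_p(\xi_{\operatorname{Ad}(g^{-1})X}(p))$. Taking $g=\exp(zX)$ and using $\operatorname{ad}(X)X=0$, which gives $\operatorname{Ad}(\exp(-zX))X=X$, we get $\Gamma'(z)=d(\exp(zX))_x(\xi_X(x))$, and this lies in $d(\exp(zX))_x(D_x)=D_{\exp(zX)\cdot x}$ by the $G$-invariance of $D$ and the choice $\xi_X(x)\in D_x$. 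In short, the orbit of a one-parameter subgroup is automatically everywhere tangent to $D$ once its initial velocity lies in $D_x$; this is the only point that needs care, and it rests solely on $[X,X]=0$ and the invariance of $D$.

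To finish, apply the distance-decreasing property (Proposition \ref{Kobayashi properties}(2)) to $\Gamma\colon(\CC,T_{\CC})\to(M,D)$, which satisfies $d\Gamma(T_{\CC})\subset D$ by the previous step: since $d_{\CC,T_{\CC}}=d_{\CC}\equiv 0$, we obtain $d_{M,D}(\Gamma(z_1),\Gamma(z_2))=0$ for all $z_1,z_2\in\CC$ (alternatively, restrict $\Gamma$ to discs of radius $R$ and let $R\to\infty$ to see $\delta_{M,D}$ vanishes along the image of $\Gamma$). As $\Gamma$ is non-constant, this exhibits two distinct points at $d_{M,D}$-distance $0$, contradicting the Kobayashi hyperbolicity of $(M,D)$; hence $G$ cannot be complex. (Proposition \ref{compact isotropy} is not needed above; it does give an alternative route, since from it the isotropy Lie algebra is simultaneously a complex subspace of $\mathfrak g$ and the Lie algebra of a compact group, hence abelian, and a similar one-parameter-subgroup argument applies.)
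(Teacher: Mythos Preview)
Your argument is correct. The paper gives no explicit proof of this corollary; it is listed among the properties ``given without proof since they can be obtained through the same proofs used in \cite{Kobayashi}'' and is positioned as a consequence of Proposition~\ref{compact isotropy}. Your proof is precisely the standard Kobayashi entire-curve argument, with the one genuinely new point in the distribution setting---that the orbit $z\mapsto\exp(zX)\cdot x$ is tangential to $D$ once $\xi_X(x)\in D_x$---handled cleanly via $\operatorname{Ad}(\exp(-zX))X=X$ and the $G$-invariance of $D$.

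The only difference from the paper's framing is that you bypass Proposition~\ref{compact isotropy} entirely, going straight from a complex one-parameter subgroup to the vanishing of $d_{M,D}$ along its orbit. This is more direct and avoids the van Dantzig--van der Waerden and Bochner--Montgomery machinery; the paper's placement suggests a route through compactness of the isotropy, but as you note parenthetically, that route would also end in a one-parameter-subgroup argument. So the underlying mechanism is the same, and your presentation is arguably the cleaner of the two.
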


\begin{prop} 
Let $\widetilde{M}$ be a covering manifold of $M$ with the covering projection $\pi\colon \widetilde{M}\rightarrow M$. 
Define an induced HBGD, denoted by $\widetilde{D}$, on $\widetilde{M}$ by $d\pi^{-1}(D)$.
\begin{enumerate}
\item Let $x,\,y\in M$ and choose $\widetilde{x}\in \widetilde{M}$ such that $\pi(\widetilde{x})=x$. Then 
\begin{equation}
d_{M,D}(x,y) = \inf_{\widetilde{y}} d_{\widetilde{M}, \widetilde{D}}(\widetilde{x},\widetilde{y})
\end{equation}
where the infimum is taken over all $\widetilde{y}\in \widetilde{M}$ such that $y=\pi(\widetilde{y})$.
\item
$(M,D)$ is (complete) Kobayashi hyperbolic if and only if $(\widetilde{M}, \widetilde{D})$ is (complete) Kobayashi hyperbolic.
\end{enumerate}
\end{prop}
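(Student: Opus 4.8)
The plan is to first record how $\pi$ interacts with the two distributions --- the crucial point being that holomorphic discs lift through the covering --- then to prove the distance formula in part (1) by lifting chains of discs, and finally to deduce part (2) from part (1) together with the length--integral description of $d_{M,D}$. Since $\pi$ is a local biholomorphism, $d\pi$ restricts to an isomorphism $\widetilde D_p\to D_{\pi(p)}$ for each $p\in\widetilde M$, and $\widetilde D=d\pi^{-1}(D)$ is again a holomorphic bracket generating distribution (bracket generation is a local condition, and $\pi$-relating a local holomorphic frame of $D$ yields one for $\widetilde D$ whose iterated Lie brackets span $T_{\widetilde M}$); in particular $d_{\widetilde M,\widetilde D}$ is finite by Theorem \ref{finiteness}. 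The key observation is that a holomorphic disc $h\colon\Delta\to M$ tangential to $D$ with $h(a)=\pi(\widetilde x)$ lifts, $\Delta$ being simply connected, to a holomorphic $\widetilde h\colon\Delta\to\widetilde M$ with $\widetilde h(a)=\widetilde x$, and $\widetilde h$ is automatically tangential to $\widetilde D$ because $d(\pi\circ\widetilde h)$ takes values in $D=d\pi(\widetilde D)$. Together with Proposition \ref{Kobayashi properties}(2) applied to $\pi$ (for which $d\pi(\widetilde D)\subset D$), this yields $d_{M,D}(\pi\widetilde x,\pi\widetilde y)\le d_{\widetilde M,\widetilde D}(\widetilde x,\widetilde y)$ for all $\widetilde x,\widetilde y$ --- hence ``$\le$'' in part (1) upon taking the infimum over $\widetilde y\in\pi^{-1}(y)$ --- and likewise $k_{M,D}(d\pi(v))=k_{\widetilde M,\widetilde D}(v)$ for $v\in\widetilde D$.

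For ``$\ge$'' in part (1), I would fix $\epsilon>0$, choose a chain $x=x_0,\dots,x_N=y$ with $\sum_j\delta_{M,D}(x_{j-1},x_j)<d_{M,D}(x,y)+\epsilon$ realized by holomorphic discs $h_j\colon\Delta\to M$ tangential to $D$ with $h_j(a_j)=x_{j-1}$, $h_j(b_j)=x_j$ and $d_\Delta(a_j,b_j)<\delta_{M,D}(x_{j-1},x_j)+\epsilon 2^{-j}$, and lift them inductively: with $\widetilde x_0:=\widetilde x$, lift $h_j$ to a $\widetilde D$-tangential disc $\widetilde h_j$ with $\widetilde h_j(a_j)=\widetilde x_{j-1}$ and set $\widetilde x_j:=\widetilde h_j(b_j)\in\pi^{-1}(x_j)$. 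Then $\delta_{\widetilde M,\widetilde D}(\widetilde x_{j-1},\widetilde x_j)\le d_\Delta(a_j,b_j)$, so by the triangle inequality (Proposition \ref{Kobayashi properties}(1)) one gets $d_{\widetilde M,\widetilde D}(\widetilde x,\widetilde x_N)<d_{M,D}(x,y)+2\epsilon$ with $\widetilde x_N\in\pi^{-1}(y)$; letting $\epsilon\to0$ completes part (1).

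For part (2) I would combine the length--integral formula $d_{M,D}(p,q)=\inf_\gamma\int k_{M,D}(\gamma')$ over piecewise differentiable $D$-tangential curves (the Proposition cited from \cite{Royden, Abate}) with a first-exit argument. Assuming $(\widetilde M,\widetilde D)$ Kobayashi hyperbolic and given $x\ne y$ in $M$, fix $\widetilde x\in\pi^{-1}(x)$, an evenly covered relatively compact neighbourhood $U$ of $x$ with $y\notin\overline U$, the sheet $\widetilde U_0\ni\widetilde x$, and a small ball $W$ with $\widetilde x\in W$ and $\overline W\subset\widetilde U_0$. Any $D$-tangential curve $\gamma$ from $x$ to $y$ lifts to a $\widetilde D$-tangential $\widetilde\gamma$ from $\widetilde x$; since $\pi(\overline W)\subset U$ and $y\notin\overline U$, the endpoint $\widetilde\gamma(1)\in\pi^{-1}(y)$ lies outside $\overline W$, so $\widetilde\gamma$ first hits $\partial W$ at some $t_0\in(0,1)$, and using $k_{M,D}(\gamma')=k_{\widetilde M,\widetilde D}(\widetilde\gamma')$,
\[
\int_0^1 k_{M,D}(\gamma')\ \ge\ \int_0^{t_0} k_{\widetilde M,\widetilde D}(\widetilde\gamma')\ \ge\ d_{\widetilde M,\widetilde D}\big(\widetilde x,\widetilde\gamma(t_0)\big)\ \ge\ \min_{p\in\partial W}d_{\widetilde M,\widetilde D}(\widetilde x,p)\ >\ 0,
\]
the last inequality because $\partial W$ is compact, $\widetilde x\notin\partial W$, and $d_{\widetilde M,\widetilde D}(\widetilde x,\cdot)$ is continuous and positive; taking the infimum over $\gamma$ gives $d_{M,D}(x,y)>0$. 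The converse is symmetric: if $(M,D)$ is hyperbolic and $\widetilde x\ne\widetilde y$, then $d_{\widetilde M,\widetilde D}(\widetilde x,\widetilde y)\ge d_{M,D}(\pi\widetilde x,\pi\widetilde y)>0$ unless $\pi\widetilde x=\pi\widetilde y=:x$, and in the latter case, separating $\widetilde x,\widetilde y$ into disjoint sheets over an evenly covered neighbourhood of $x$ and repeating the first-exit argument around a small ball $W\ni\widetilde x$ gives $d_{\widetilde M,\widetilde D}(\widetilde x,\widetilde y)\ge\min_{p\in\partial W}d_{M,D}(x,\pi(p))>0$, since $\pi(\partial W)$ is compact and avoids $x$.

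The ``complete'' statements then follow by the standard covering argument: a Cauchy sequence maps to, or --- via part (1) --- lifts to, a Cauchy sequence on the other side with summable consecutive distances, and one uses that $\pi$ is a local homeomorphism together with the fact that $d_{M,D}$ induces the manifold topology on a Kobayashi hyperbolic $(M,D)$ (which itself follows from infinitesimal hyperbolicity and Chow's theorem). I expect the main obstacle to be precisely the ``same fibre'' situation arising in both parts: promoting the individually positive distances over the discrete fibre $\pi^{-1}(y)$ to a strictly positive infimum requires more than the distance-decreasing property of $\pi$ --- one must invoke the length--integral formula to trap a competing curve inside an evenly covered neighbourhood up to its first exit from a small ball, while checking that lifted curves remain tangential to $\widetilde D$ and that the relevant compact set avoids the base point.
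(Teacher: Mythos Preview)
The paper does not actually prove this proposition: Section~\ref{definition and properties} explicitly states that these properties ``will be presented without proof since they can be obtained through the same proofs used in \cite{Kobayashi} and in the papers cited.'' Your argument is a correct and careful adaptation of the classical Kobayashi covering argument to the distributional setting, and it is precisely the kind of proof the paper is tacitly invoking. The lifting of chains for part~(1) and the first-exit estimate for part~(2) are both sound; you correctly isolate the ``same fibre'' case as the only nontrivial point and handle it.

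One remark on economy: for the plain hyperbolicity equivalence in part~(2) you have already done more than you need. You establish $k_{M,D}(d\pi(v))=k_{\widetilde M,\widetilde D}(v)$ for all $v\in\widetilde D$, and since $d\pi$ is a fibrewise isomorphism $\widetilde D_p\to D_{\pi(p)}$, this immediately gives that $(M,D)$ is infinitesimally Kobayashi hyperbolic if and only if $(\widetilde M,\widetilde D)$ is. The paper's Corollary following the Royden--Abate integral formula (``$(M,D)$ is Kobayashi hyperbolic if and only if $(M,D)$ is infinitesimally Kobayashi hyperbolic'') then finishes part~(2) for hyperbolicity without any first-exit argument. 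Your route via the length integral is still needed, however, for the completeness statement, where the topology comparison you flag is genuinely required; your sketch there is correct in outline but would benefit from spelling out why a Cauchy lift eventually stays in a single sheet.
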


\begin{prop}\label{fiber bundle}
Let $\pi\colon P\rightarrow M$ be a holomorphic fiber bundle over $M$.
Suppose that $(M,D)$ is (complete) Kobayashi hyperbolic. 
Let $T_P=\mathcal{H}\oplus \mathcal{V}$ be a decomposition
where $\mathcal{V}$ is the vertical distribution and 
$\mathcal{H}$ is any horizontal distribution (i.e., $d\pi|_{\mathcal H}\colon \mathcal H\rightarrow T_M$ is an isomorphism).
Set $\widetilde D = (d\pi|_{\mathcal{H}})^{-1}(D) \subset \mathcal{H}$.
\begin{enumerate}
\item
If $\widetilde D$ is bracket generating in $T_P$, 
then $(P,\widetilde D)$ is (complete) Kobayashi hyperbolic.
\item
If the fiber of $P$ is (complete) Kobayashi hyperbolic, then $(P,\widetilde D\oplus \mathcal{V})$ is also (complete) Kobayashi hyperbolic.
\end{enumerate}
\end{prop}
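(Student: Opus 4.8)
The plan is to run both parts through the projection $\pi$, using repeatedly that for a bracket generating distribution the Kobayashi pseudo-distance is finite (Theorem~\ref{finiteness}) and that Kobayashi hyperbolicity is equivalent to infinitesimal Kobayashi hyperbolicity. For Part (1): since $\mathcal H$ is mapped isomorphically onto $T_M$ by $d\pi$, the restriction of $d\pi$ is a bundle isomorphism $\widetilde D\to D$, so $d\pi(\widetilde D)\subset D$ and Proposition~\ref{Kobayashi properties}(2) applied to $\pi$ gives $k_{M,D}(d\pi(v))\le k_{P,\widetilde D}(v)$ for $v\in\widetilde D$ and $d_{M,D}(\pi(x),\pi(y))\le d_{P,\widetilde D}(x,y)$. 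Fix a compact $K\subset P$. As $(M,D)$ is Kobayashi hyperbolic it is infinitesimally Kobayashi hyperbolic, so $k_{M,D}(w)\ge\epsilon|w|_g$ on $D$ over $\pi(K)$; combining this with a lower bound $|d\pi(v)|_g\ge c_K|v|_{g'}$ on $\widetilde D$ over $K$ --- valid because $d\pi|_{\widetilde D}$ is a bundle isomorphism and $K$ is compact --- we obtain $k_{P,\widetilde D}(v)\ge\epsilon c_K|v|_{g'}$ over $K$. Since $\widetilde D$ is bracket generating, $d_{P,\widetilde D}$ is finite and this uniform lower bound makes $(P,\widetilde D)$ infinitesimally Kobayashi hyperbolic, hence Kobayashi hyperbolic.

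The completeness assertion in Part (1) is the step I expect to be the main obstacle, since it is where the bracket generating hypothesis is genuinely used. Given a $d_{P,\widetilde D}$-Cauchy sequence $\{x_n\}$, the sequence $\{\pi(x_n)\}$ is $d_{M,D}$-Cauchy, hence converges to some $b\in M$ by completeness of $(M,D)$; after passing to a subsequence, $\pi(x_n)$ lies in a trivializing neighborhood $U'\Subset U$ of $b$, and we write $x_n=(\pi(x_n),y_n)\in U\times F$. It remains to confine $\{y_n\}$ to a relatively compact subset of $F$. For this I would quantify Chow's construction: from a chain of $\widetilde D$-horizontal discs of small total Poincar\'e length, after refining the steps so that the relevant arc of each disc stays in a fixed compact set where $df$ has bounded operator norm (by the infinitesimal bound above), one extracts a $\widetilde D$-horizontal curve whose sub-Riemannian length is comparable to the chain length, and the ball--box estimate for the bracket generating $\widetilde D$ then bounds the displacement in the fibre directions by a power of the chain length. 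Consequently closed $d_{P,\widetilde D}$-balls have relatively compact image in $P$ --- their base components lie in a compact $d_{M,D}$-ball, their fibre components in a relatively compact subset of $F$ --- so such balls are compact and $\{x_n\}$ has a convergent subsequence.

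For Part (2): first one checks that $\widetilde D\oplus\mathcal V$ is bracket generating, hence $d_{P,\widetilde D\oplus\mathcal V}$ is finite. Lifting a local holomorphic frame $X_1,\dots,X_d$ of $D$ with spanning iterated brackets to the frame $\widetilde X_1,\dots,\widetilde X_d$ of $\widetilde D\subset\mathcal H$, the iterated brackets of the $\widetilde X_i$ project under $d\pi$ onto those of the $X_i$ and so span $T_P$ modulo $\mathcal V$, and adjoining a local frame of $\mathcal V$ spans $T_P$. Since $d\pi(\widetilde D\oplus\mathcal V)=d\pi(\widetilde D)\subset D$, Proposition~\ref{Kobayashi properties}(2) gives $d_{M,D}(\pi(x),\pi(y))\le d_{P,\widetilde D\oplus\mathcal V}(x,y)$, so if the right side vanishes then $\pi(x)=\pi(y)=:b$ and $x,y\in F_b$. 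The decisive point is then a localization to the fibre: with $U'\Subset U''\Subset U$ around $b$, $P|_U\cong U\times F$ and projection $\mathrm{pr}_F$, take a chain of $(\widetilde D\oplus\mathcal V)$-tangent discs of small total length joining two points of $P|_{U'}$, refine it so every step is tiny, and use the Schwarz--Pick lemma together with the positive $d_{M,D}$-distance from $\overline{U''}$ to $M\setminus U$ to ensure each disc maps a fixed Poincar\'e subdisc into $P|_U$; rescaling that subdisc to $\Delta$ and composing with $\mathrm{pr}_F$ (whose image automatically lies in $T_F$) yields a chain of discs in $F$ joining the fibre components, of total length at most $C$ times the original, with $C$ depending only on $U',U''$. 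Hence $d_F(\mathrm{pr}_F(x),\mathrm{pr}_F(y))\le C\,d_{P,\widetilde D\oplus\mathcal V}(x,y)$ whenever the right side is small. If $F$ is Kobayashi hyperbolic this forces $x=y$ when $d_{P,\widetilde D\oplus\mathcal V}(x,y)=0$, proving hyperbolicity; and if in addition $(M,D)$ and $F$ are complete, a $d_{P,\widetilde D\oplus\mathcal V}$-Cauchy sequence projects to a convergent sequence in $M$ and, by the displayed inequality, has $d_F$-Cauchy fibre components, hence converges in $F$, so it converges in $P$.
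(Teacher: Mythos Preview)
The paper does not supply its own proof of this proposition: it is listed among the properties in Section~\ref{definition and properties} that are ``presented without proof,'' with the remark that the arguments are the same as in \cite{Kobayashi}. So there is nothing to compare against beyond the classical template. Your proposal \emph{is} that template, transported to the HBGD setting, and for the hyperbolicity assertions in both parts your argument is correct: the distance-decreasing property under $\pi$ plus infinitesimal hyperbolicity of $(M,D)$ gives the uniform lower bound on $k_{P,\widetilde D}$ over compacta, and the localization-to-the-fibre estimate in Part~(2) is the standard one.

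The one place that needs tightening is the completeness claim in Part~(1). Your sketch invokes the ball--box estimate to bound the fibre displacement by a power of the chain length, ``after refining the steps so that the relevant arc of each disc stays in a fixed compact set.'' But the ball--box constants depend on that compact set, and showing the chain stays in a compact set of $P$ is exactly what you are trying to prove; as written this is circular. What you do control non-circularly is the base: any approximately minimizing $\widetilde D$-horizontal curve projects to a $D$-horizontal curve of $k_{M,D}$-length at most $R+1$, which by completeness of $(M,D)$ lies in a fixed compact $K\subset M$, and on $\pi^{-1}(K)$ the pullback metric $\tilde g:=(\pi|_{\widetilde D})^{*}g$ is uniformly comparable to $k_{P,\widetilde D}$. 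So the honest statement you get for free is $d_{P,\widetilde D}\ge \epsilon_K\,\rho_{CC,(\widetilde D,\tilde g)}$ for chains over $K$; to finish you must argue that the $\rho_{CC,(\widetilde D,\tilde g)}$-ball over $K$ is relatively compact in $P$. That is where the bracket-generating hypothesis and a genuine (uniform-in-the-fibre or iterative) ball--box argument enter, and it deserves a couple of explicit lines rather than a pointer.
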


\begin{prop}\label{embedding}
Suppose that there is an embedding $\iota \colon M_1\rightarrow M_2$ such that $d\iota (D_1)\subset D_2$. 
If $(M_2,D_2)$ is (complete) hyperbolic, then $(M_1,D_1)$ is also (complete) hyperbolic.
\end{prop}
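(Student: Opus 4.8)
The plan is to deduce everything from the distance-decreasing property in Proposition~\ref{Kobayashi properties}(2), applied to the holomorphic map $\iota$: its hypothesis $d\iota(D_1)\subset D_2$ is precisely the one required there, and it gives
\[
d_{M_2,D_2}\big(\iota(x),\iota(y)\big)\ \le\ d_{M_1,D_1}(x,y)\qquad\text{for all }x,y\in M_1 .
\]

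First I would settle hyperbolicity. By its very definition $d_{M_1,D_1}$ is symmetric and vanishes on the diagonal; it obeys the triangle inequality by Proposition~\ref{Kobayashi properties}(1) and is finite by Theorem~\ref{finiteness}, since $D_1$ is bracket generating. Hence it remains only to show that $d_{M_1,D_1}(x,y)=0$ implies $x=y$: the displayed inequality then forces $d_{M_2,D_2}(\iota(x),\iota(y))=0$, so $\iota(x)=\iota(y)$ because $(M_2,D_2)$ is hyperbolic, and therefore $x=y$ because $\iota$ is injective. Thus $(M_1,D_1)$ is Kobayashi hyperbolic, and then $d_{M_1,D_1}$ is a continuous distance on $M_1$ by the corollary recorded above.

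For the complete case, let $\{x_n\}$ be Cauchy for $d_{M_1,D_1}$. By the displayed inequality $\{\iota(x_n)\}$ is Cauchy for $d_{M_2,D_2}$, so completeness of $(M_2,D_2)$ yields a subsequence with $\iota(x_{n_k})\to p$ in $M_2$. Using that $\iota(M_1)$ is closed in $M_2$ (i.e.\ that $\iota$ is a proper embedding), one gets $p=\iota(x_\infty)$ for some $x_\infty\in M_1$; since $\iota$ is a homeomorphism onto its image, $\iota(x_{n_k})\to\iota(x_\infty)$ inside $\iota(M_1)$ gives $x_{n_k}\to x_\infty$ in the manifold topology of $M_1$, and continuity of $d_{M_1,D_1}$ then gives $d_{M_1,D_1}(x_{n_k},x_\infty)\to 0$. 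Hence every $d_{M_1,D_1}$-Cauchy sequence has a convergent subsequence, so $(M_1,D_1)$ is complete Kobayashi hyperbolic.

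The only step carrying genuine content is the appeal to closedness of $\iota(M_1)$ in $M_2$: exactly as in the classical situation, where a \emph{closed} complex submanifold of a complete hyperbolic manifold is complete hyperbolic (cf.\ \cite{Kobayashi}), this hypothesis is indispensable for the completeness assertion, and should be understood as part of the word ``embedding'' here. Everything else is a direct transcription of the arguments in \cite{Kobayashi}, the inputs being merely the distance-decreasing inequality, injectivity of $\iota$, and continuity of the Kobayashi pseudo-distance.
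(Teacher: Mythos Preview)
The paper states this proposition without proof: Section~\ref{definition and properties} announces that the listed properties ``will be presented without proof since they can be obtained through the same proofs used in \cite{Kobayashi}.'' So there is no argument in the paper to compare against; your proof is exactly the standard transcription from \cite{Kobayashi} that the author has in mind, via the distance-decreasing inequality of Proposition~\ref{Kobayashi properties}(2).

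Your hyperbolicity argument is correct. Your observation in the final paragraph is well taken: the completeness assertion genuinely requires $\iota(M_1)$ to be closed in $M_2$, just as in the classical theory, and the paper's bare word ``embedding'' should indeed be read that way (the proposition is only invoked later for inclusions of open $G$-orbits and of base cycles, where this causes no trouble). One small point worth tightening in your completeness step: you pass from $d_{M_2,D_2}$-convergence $\iota(x_{n_k})\to p$ to convergence in the manifold topology of $M_2$, and then back to $d_{M_1,D_1}$-convergence via continuity. The missing link is that for a Kobayashi hyperbolic pair $(M,D)$ the distance $d_{M,D}$ induces the manifold topology; this follows (as in the classical case) from the local comparison with the Carnot--Carath\'eodory distance recorded in Theorem~\ref{Carnot-Caratheodory distance}, but it is not quite the same as mere continuity of $d_{M,D}$, which only gives one inclusion of topologies.
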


\begin{prop}
Suppose that $(M,D)$ is complete Kobayashi hyperbolic. 
Then the set of holomorphic mappings from $M_1$ to $(M,D)$ 
tangential to $D$ is a normal family: every sequence of holomorphic maps 
from $M_1$ to $M$ tangential to $D$ either has a subsequence 
that converges uniformly on compact subsets 
or has a compactly divergent subsequence.
\end{prop}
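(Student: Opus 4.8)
The plan is to follow the classical argument of Kobayashi for complete hyperbolic manifolds (see \cite{Kobayashi}), replacing the ordinary Kobayashi pseudo-distance on the target by $d_{M,D}$ and invoking the distance decreasing property of Proposition~\ref{Kobayashi properties}. First I would record the basic monotonicity: if $f\colon M_1\to M$ is holomorphic and tangential to $D$ and $h\colon\Delta\to M_1$ is any holomorphic disc, then $f\circ h\colon\Delta\to M$ is holomorphic and $d(f\circ h)(T_\Delta)\subset df(T_{M_1})\subset D$, so $f\circ h$ is tangential to $D$; chaining such discs and passing to infima yields
\[
d_{M,D}\bigl(f(x),f(y)\bigr)\le d_{M_1}(x,y)\qquad\text{for all }x,y\in M_1,
\]
where $d_{M_1}$ is the ordinary Kobayashi pseudo-distance of $M_1$. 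Hence the whole family $\mathcal F$ of holomorphic maps $M_1\to M$ tangential to $D$ is uniformly distance decreasing from $(M_1,d_{M_1})$ to $(M,d_{M,D})$, and, since $d_{M_1}$ is continuous and vanishes on the diagonal, $\mathcal F$ is equicontinuous: for $x_0\in M_1$ and $\varepsilon>0$ the set $\{x:d_{M_1}(x,x_0)<\varepsilon\}$ is a neighbourhood of $x_0$ on which every $f\in\mathcal F$ moves $x_0$ by less than $\varepsilon$ in $d_{M,D}$.

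Next I would upgrade $(M,d_{M,D})$ to a \emph{proper} metric space, i.e.\ one in which closed bounded sets are compact. This is the step where completeness and the bracket generating hypothesis genuinely enter: $M$ is locally compact, $d_{M,D}$ is the length metric of the sub-Finsler structure $k_{M,D}$ on $D$ by the Royden-type formula stated earlier, $d_{M,D}$ induces the manifold topology (here one argues, exactly as in Chow's theorem, that short chains of small discs fill a neighbourhood of every point because $D$ is bracket generating), and every $d_{M,D}$-Cauchy sequence converges by hypothesis; the Hopf--Rinow--Cohn-Vossen theorem for length spaces then yields properness. I expect this to be the main obstacle, since it is essentially the only place that goes beyond a verbatim transcription of Kobayashi's proof.

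With properness available the dichotomy is routine. I may assume $M_1$ connected (arguing on components otherwise), fix $p\in M_1$, and let $\{f_\nu\}\subset\mathcal F$. If some subsequence $\{f_{\nu_k}(p)\}$ stays in a compact subset of $M$, then by equicontinuity and properness the set of $x\in M_1$ for which $\{f_{\nu_k}(x)\}_k$ is relatively compact is open, closed and nonempty, hence all of $M_1$; Ascoli--Arzel\`a together with an exhaustion of $M_1$ by compacta then produces a further subsequence converging uniformly on compact subsets to a continuous map $f_\infty$, which is holomorphic as a local uniform limit of holomorphic maps and tangential to $D$ because $D$ is a closed holomorphic subbundle and $df_{\nu_k}\to df_\infty$ locally uniformly. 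If instead $\{f_\nu(p)\}$ is compactly divergent, then for compact $L\subset M_1$ and $K\subset M$ I would set $R=\sup_{x\in L}d_{M_1}(x,p)<\infty$; then $f_\nu(L)$ lies in the $d_{M,D}$-ball of radius $R$ about $f_\nu(p)$, the closed $R$-neighbourhood of $K$ is compact by properness, and $f_\nu(p)$ eventually leaves it, so $f_\nu(L)\cap K=\varnothing$ for all large $\nu$; thus $\{f_\nu\}$ is compactly divergent, which finishes the proof.
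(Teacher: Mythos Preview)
Your proposal is correct and follows precisely the route the paper itself indicates: Section~\ref{definition and properties} states this proposition without proof, noting that the listed properties ``can be obtained through the same proofs used in \cite{Kobayashi} and in the papers cited,'' and your argument is exactly that classical Kobayashi proof adapted by replacing $d_M$ with $d_{M,D}$ via Proposition~\ref{Kobayashi properties}. The one point the paper does not make explicit---that $d_{M,D}$ induces the manifold topology so that Hopf--Rinow--Cohn-Vossen applies---you have correctly identified as the key step and handled appropriately using the bracket generating hypothesis together with the Royden-type integral formula.
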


\section{Kobayashi hyperbolicity and negative curvature}\label{Schwarz lemma}

\subsection{ Generalized Schwarz's lemma for $\mathbf{(M,D)}$ 
and Kobayashi hyperbolicity for $\mathbf{(M,D)}$ with negative holomorphic sectional curvatures}
Let $M$ be a complex manifold of complex dimension $m$ 
and $D$ a holomorphic distribution on $M$ of rank $d$ with a Hermitian metric $g$. 
Let $A^k(D)$ denote the set of $D$-valued $k$-forms on $M$.
Given a Hermitian metric $g$, there is a unique connection, called the Chern connection, 
$\nabla\colon A^0(D)\rightarrow A^1(D)$ which is compatible with 
both the complex structure of $M$ 
and the metric.
Choose a local orthonormal frame $e_1,\dots,e_d$ of $D$ and denote its dual frame by $\theta_1,\dots, \theta_d$, i.e., $g=\sum\theta_i\overline\theta_i$.
Denote $\theta_{ij}$ a connection 1-form, i.e.,
$$
\nabla e_i = \sum \theta_{ij} \otimes e_j.
$$ 
If one extends $g$ to a Hermitian metric of $T_M$ and decompose it as $T_M = D \oplus D^\perp$ where $D^\perp$ is the orthogonal complement of $D$ in $T_M$, then $\nabla = \pi_D\circ \nabla_M$ where $\nabla_M$ is the Chern connection with respect to the extended Hermitian metric on $T_M$ 
and $\pi_D$ is the orthogonal projection onto $D$.
By adding a local frame of $D^\perp$ we can complete the given frame to $e_1, \dots, e_m$ 
and the dual frame to $\theta_1,\dots, \theta_m$. 
Then $d\theta_i - \theta_{ij}\wedge \theta_j$ can be expressed by 
\begin{equation}
d\theta_i = \theta_{ij}\wedge \theta_j + \Theta_i + N_i,
\end{equation}
where $N_i$ contains $\theta_\alpha$ or $\overline\theta_\alpha$ for $d+1\leq  \alpha \leq m$ and $\Theta_i$ consists of $\theta_j$ and $\overline\theta_j$ for $1\leq j \leq d$.
We can write
\begin{equation}
\Theta_i = \Theta_i^{2,0} + \Theta_i^{1,1} + \Theta_i^{0,2}
\end{equation}
with an $(a,b)$-form $\Theta_i^{a,b}$. Note that
$\Theta_i^{0,2}=0$ and $\Theta_i^{1,1}=0$ 
since $M$ is a complex manifold,
and $\nabla$ is the Chern connection.
The curvature tensor is given by
\begin{equation}
\Theta_{ij} = d\theta_{ij} - \theta_{ik}\wedge\theta_{kj}
= R_{ija\overline{b}}\theta_a\wedge\overline\theta_b.
\end{equation}
For $X = \sum X_i e_i$ and $Y=\sum Y_i e_i$ in $D_p$, 
define the holomorphic bisectional curvature of $(M, D, g)$ in the directions $X$ and $Y$ by
\begin{equation}
\text{Bisec}_{(D,g)}(X,Y)= \frac{\sum R_{ijk\overline{l}} X_i\overline{X}_jY_k\overline{Y}_l}{\sum |X_i|^2 \sum |Y_i|^2},
\end{equation}
and the holomorphic sectional curvature of $(M,D,g)$ in the direction $X$ by
\begin{equation}
H_{(D,g)}(X) = \text{Bisec}_{(D,g)}(X,X).
\end{equation}

For a complex manifold $N$ equipped with a Hermitian metric $h$,
let $\{\omega_\sigma\}$ be an orthonormal frame of $(N,h)$.
For a holomorphic mapping
$f\colon N\rightarrow M$ tangential to $D$,
let $u=\sum_{i\sigma} a_{i\sigma}\overline a_{i\sigma}$, 
where 
$f^*(\theta_i) = \sum_\sigma a_{i\sigma}\omega_\sigma$.
Then by the Omori-Yau maximum principle 
and the same calculations of the Laplacian of $u$ given in \cite{Yau}, 
we have the following theorem.

\begin{thm} \label{Schwarz lemma for holomorphic distribution}
 Let $(N,h)$ be a complete Hermitian manifold with 
 Ricci curvature bounded from below by a constant $K_1$.
 Let $(M,D,g)$ be a complex manifold  with a holomorphic distribution $D$ equipped 
with a Hermitian metric $g$. 
Suppose that the holomorphic bisectional curvature of $(M, D,g)$ is bounded from above by a negative constant $K_2$. Then, for any holomorphic mapping $f$ from $N$ into $M$ tangential to $D$, we have
\begin{equation}
f^*(g)\leq \frac{K_1}{K_2}\, h.
\end{equation}
\end{thm}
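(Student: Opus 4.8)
The plan is to adapt Yau's generalized Schwarz lemma \cite{Yau} to the present situation; the only new feature to exploit is that the curvature entering the estimate is the Chern curvature of the subbundle $(D,g)$ introduced above, and that tangentiality of $f$ to $D$ makes the transverse, torsion-type terms drop out after pullback. First I would set
\[
u\ :=\ \operatorname{tr}_h f^*g\ =\ \sum_{i,\sigma}a_{i\sigma}\overline{a_{i\sigma}}\ \ge\ 0,
\]
and note that it suffices to prove $u\le K_1/K_2$ at every point of $N$: since $f^*g\ge 0$ and $h>0$, a pointwise bound on the $h$-trace of $f^*g$ bounds every eigenvalue of $f^*g$ relative to $h$, and that is the asserted inequality of Hermitian forms. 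If $u\equiv 0$ then $f^*g\equiv 0$, and since $K_1/K_2\ge 0$ there is nothing to prove; otherwise I work on $\Omega:=\{u>0\}$, replacing $u$ by $u+\varepsilon$ and letting $\varepsilon\downarrow 0$ at the end if necessary, exactly as in \cite{Yau}.

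The heart of the argument is the computation of $\Delta\log u$ on $\Omega$. The key observation is that $df(T_N)\subset D$ forces $f^*\theta_\alpha=0$ for every transverse index $d+1\le\alpha\le m$; hence $f^*N_i=0$, so the terms of $d\theta_i=\theta_{ij}\wedge\theta_j+\Theta_i+N_i$ reflecting the non-integrability of $D$ do not survive the pullback, and only the coefficients $R_{ijk\overline{l}}$ appearing in $\text{Bisec}_{(D,g)}$ enter. Carrying out Yau's Bochner-type computation and discarding the nonnegative terms in the second derivatives of $f$, the Ricci lower bound $K_1$ of $(N,h)$ contributes an additive constant while the bisectional upper bound $K_2$ of $(M,D,g)$ contributes, after dividing $\Delta u$ by $u$, a term proportional to $u$; this gives a differential inequality of Chern--Lu type,
\begin{equation}
\Delta\log u\ \ge\ K_1-K_2\,u\qquad\text{on }\Omega,
\end{equation}
whose signs are consistent with the model case $N=M=\Delta$, $K_1=K_2=-1$, in which it reduces to the classical Schwarz--Pick inequality.

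It remains to apply the Omori--Yau maximum principle, available because $(N,h)$ is complete with Ricci curvature bounded below: there is a sequence $p_k\in N$ with $u(p_k)\to\sup_N u$, $|\nabla\log u|(p_k)\to 0$ and $\Delta\log u(p_k)<1/k$. If $\sup_N u=\infty$, the displayed inequality evaluated along $p_k$ is immediately contradictory since $-K_2>0$; hence $\sup_N u<\infty$, and letting $k\to\infty$ gives $0\ge K_1-K_2\sup_N u$, i.e. $K_2\sup_N u\ge K_1$, so dividing by $K_2<0$ yields $\sup_N u\le K_1/K_2$, as desired.

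I expect the main obstacle to be the Bochner computation in this partially tangential, non-K\"ahler setting: one must check that the residual torsion contributions --- the $(2,0)$-torsion $\Theta_i$ of the Chern connection of $(D,g)$, and the torsion of the merely Hermitian metric $h$ on $N$ --- either cancel, are absorbed into the discarded nonnegative terms, or are dominated by the curvature hypotheses, so that the bookkeeping genuinely reduces to Yau's; the identity $f^*\theta_\alpha=0$ is precisely what removes the otherwise obstructive pieces. A minor additional point is to cite a form of the Omori--Yau principle valid for the Chern Laplacian on a complete Hermitian (not necessarily K\"ahler) manifold under the stated Ricci bound.
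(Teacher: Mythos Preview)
Your proposal is correct and follows essentially the same route as the paper, which gives no details beyond defining $u=\sum_{i,\sigma}a_{i\sigma}\overline{a_{i\sigma}}$ and invoking ``the Omori--Yau maximum principle and the same calculations of the Laplacian of $u$ given in \cite{Yau}.'' Your explicit observation that tangentiality forces $f^*\theta_\alpha=0$ and hence $f^*N_i=0$---so that only the $R_{ijk\overline{l}}$ terms of the subbundle Chern curvature survive and Yau's bookkeeping applies verbatim---is exactly the point the paper leaves implicit; the only cosmetic difference is that you phrase the differential inequality via $\Delta\log u$ (Chern--Lu form) rather than $\Delta u$ as in \cite{Yau}, which is immaterial.
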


\begin{cor} [cf. \cite{Car-Mul-Pet}]\label{Alfors-Schwarz lemma for holomorphic distribution}
 Let $(\Delta,g_\Delta)$ be the unit disc in $\mathbb C$ endowed with the Poincar\'e metric $g_\Delta$ with curvature $-1$.
Let $(M,D,g)$ be a complex manifold  with a holomorphic distribution $D$ and $g$ a Hermitian metric on $D$. Suppose that the holomorphic sectional curvature of $(M,D,g)$ is bounded from above by a negative constant $K$. Then, we have
\begin{equation}
f^*(g)\leq \frac{1}{K} \, g_\Delta
\end{equation}
for any holomorphic mapping $f$ from $\Delta$ into $M$ tangential to $ D$.
\end{cor}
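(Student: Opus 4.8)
The plan is to deduce Corollary \ref{Alfors-Schwarz lemma for holomorphic distribution} by specializing Theorem \ref{Schwarz lemma for holomorphic distribution} to the source manifold $(N,h)=(\Delta,g_\Delta)$. The Poincar\'e disc is a complete Hermitian manifold and, being of complex dimension one with constant curvature $-1$, has Ricci curvature bounded below by $K_1=-1$, so the hypothesis on $N$ in Theorem \ref{Schwarz lemma for holomorphic distribution} is met. The only point requiring care is that Theorem \ref{Schwarz lemma for holomorphic distribution} assumes a negative upper bound on the holomorphic \emph{bisectional} curvature of $(M,D,g)$, whereas in Corollary \ref{Alfors-Schwarz lemma for holomorphic distribution} we only assume such a bound on the holomorphic \emph{sectional} curvature.

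To close this gap I would revisit the Bochner-type computation of $\Delta u$ for $u=\sum_{i\sigma}a_{i\sigma}\overline a_{i\sigma}$ that underlies the proof of Theorem \ref{Schwarz lemma for holomorphic distribution}. When $\dim_\CC N=1$ the orthonormal coframe $\{\omega_\sigma\}$ of $(N,h)$ reduces to a single $1$-form $\omega_1$, so $u=\sum_i|a_{i1}|^2$ and the contribution of the curvature of $(M,D,g)$ to $\Delta u$ is exactly $\sum R_{ijk\overline l}\,a_{i1}\overline a_{j1}a_{k1}\overline a_{l1}=H_{(D,g)}(X)\big(\textstyle\sum_i|a_{i1}|^2\big)^2$, where $X=\sum_i a_{i1}e_i\in D_{f(z)}$. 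Since $f$ is tangential to $D$, the vector $X$ spans $df(T_\Delta)$, so only the holomorphic sectional curvature of $(M,D,g)$ along $df(\partial/\partial z)$ enters the estimate, and the hypothesis $H_{(D,g)}\le K<0$ suffices to run the Omori--Yau maximum principle argument of \cite{Yau} unchanged. Feeding $K_1=-1$ and $K_2=K$ into the conclusion of Theorem \ref{Schwarz lemma for holomorphic distribution} then yields $f^*(g)\le\tfrac{K_1}{K_2}\,g_\Delta=-\tfrac1K\,g_\Delta$, which is the desired inequality.

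The main obstacle is exactly the bisectional-versus-sectional distinction just discussed: one has to observe that a one-dimensional source only ``sees'' the holomorphic sectional curvature of the target in the single direction $df(\partial/\partial z)$, after which Corollary \ref{Alfors-Schwarz lemma for holomorphic distribution} is a direct specialization recovering the estimate of \cite{Car-Mul-Pet}. The only remaining verification, that $(\Delta,g_\Delta)$ is complete with Ricci curvature bounded below — the precise input needed for the Omori--Yau maximum principle — is classical for the Poincar\'e metric.
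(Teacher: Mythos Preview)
Your proposal is correct and matches the paper's intended argument: the corollary is stated without proof as an immediate specialization of Theorem \ref{Schwarz lemma for holomorphic distribution} to $(N,h)=(\Delta,g_\Delta)$, and you have supplied precisely the one nontrivial observation needed---that when $\dim_\CC N=1$ the Bochner computation of $\Delta u$ involves only the holomorphic sectional curvature $H_{(D,g)}(df(\partial/\partial z))$ rather than the full bisectional curvature, so the weaker hypothesis of the corollary suffices. Your constant $-1/K$ is the correct one (the $1/K$ in the displayed inequality is a sign slip in the statement, since $K<0$).
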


For any $x,\,y\in M$, define the {\it Carnot-Caratheodory} distance with respect to $(M,D,g)$ as
\begin{equation}
\rho_{M,D,g}(x,y) = \inf_\gamma  \int g(\gamma'(t), \gamma'(t))^{1/2}dt
\end{equation}
where the infimum is taken over all $\gamma\colon [0,1]\rightarrow M$ tangential to $D$ such that $\gamma(0)=x,\, \gamma(1)=y$. If there is no curve satisfying this condition, set $\rho_{M,D,g}(x,y)=\infty$.

\begin{thm}[cf. \cite{Montgomery}]\label{Carnot-Caratheodory distance}
Suppose that $D$ is bracket generating. Then
$\rho_{M,D,g}$ is finite, continuous on $M$ and induces the manifold topology.
\end{thm}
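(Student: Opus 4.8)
The plan is to adapt the classical proof of the Chow--Rashevskii theorem to the present holomorphic setting, reusing the local flow construction of Section~\ref{definition and properties}. Throughout, fix a smooth Hermitian metric $\hat g$ on $T_M$ extending $g$, and let $d_{\hat g}$ denote the associated Riemannian distance, which induces the manifold topology; recall also that, since $D$ is a complex subbundle, it is invariant under the complex structure $J$, so $JX\in D$ for every local section $X$ of $D$.

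\emph{Finiteness.} Near any $x\in M$ we have, from Section~\ref{definition and properties}, the local holomorphic diffeomorphism $F(t_1,\dots,t_m)=\phi_1(t_1)\circ\cdots\circ\phi_m(t_m)x$ onto a neighbourhood $\widetilde U$ of $x$. Unpacking each $\phi_j$ with $j>d$ into a finite composition of the flows $\phi_i$ and $\phi_i^{-1}$ with $i\le d$, the whole map $\phi_1(t_1)\circ\cdots\circ\phi_m(t_m)$ becomes a finite composition $\phi_{i_1}(s_1)\circ\cdots\circ\phi_{i_L}(s_L)$ of complex flows of the horizontal fields $X_1,\dots,X_d$, with times $s_\ell=s_\ell(t)$ depending continuously on $t$ and vanishing at $t=0$. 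Applying these flows in turn to $x$ traces a concatenation of integral curves of the horizontal fields $X_{i_\ell}$ and $JX_{i_\ell}$, that is, an admissible curve for $\rho_{M,D,g}$ of finite $g$-length joining $x$ to $F(t)$. Hence $\rho_{M,D,g}<\infty$ on $\widetilde U\times\widetilde U$, and covering a path between two arbitrary points of the connected manifold $M$ by finitely many such charts gives $\rho_{M,D,g}<\infty$ everywhere. This is Chow's theorem \cite{Chow}, and it runs parallel to Theorem~\ref{finiteness}, where the same chain of flows is realised by a chain of holomorphic discs tangential to $D$.

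\emph{Comparison with a Riemannian distance.} Every admissible curve $\gamma$ has $\gamma'(t)\in D$, so its $g$-length equals its $\hat g$-length; taking the infimum over this smaller family of curves yields
\[
\rho_{M,D,g}(x,y)\ \ge\ d_{\hat g}(x,y)\qquad\text{for all }x,y\in M.
\]
In particular $\rho_{M,D,g}$ is positive definite and, being symmetric and subadditive under concatenation of curves, a genuine distance; moreover every $\rho_{M,D,g}$-ball is contained in the corresponding $d_{\hat g}$-ball, so the topology induced by $\rho_{M,D,g}$ is at least as fine as the manifold topology.

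\emph{Local upper bound, continuity, and the main obstacle.} For the reverse comparison I would estimate, for $y=F(t)\in\widetilde U$, the $g$-length $L(t)$ of the admissible curve produced above. The contribution of a horizontal coordinate ($a\le d$) is at most $C|t_a|$, where $C$ bounds $|X_a|_g$ on a compact neighbourhood. For a bracket coordinate ($a>d$) one must realise the displacement $\phi_a(t_a)$ through the commutator-flow identities, and producing a displacement of order $t_a$ along a bracket of depth $w_a$ costs horizontal motions of order $|t_a|^{1/w_a}$; altogether $L(t)\le C\sum_{a}|t_a|^{1/w_a}$ for suitable weights $w_a\ge1$. The right-hand side is continuous in $t$ and vanishes at $t=0$, so $\rho_{M,D,g}(x,y)\le L(F^{-1}(y))\to 0$ as $y\to x$; together with the previous paragraph this shows $\rho_{M,D,g}$ induces precisely the manifold topology. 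Continuity of $\rho_{M,D,g}$ on $M\times M$ then follows from the triangle inequality $|\rho_{M,D,g}(x,y)-\rho_{M,D,g}(x',y')|\le\rho_{M,D,g}(x,x')+\rho_{M,D,g}(y,y')$ and the fact that $\rho_{M,D,g}(x,x')\to0$ as $x'\to x$. The one genuinely nontrivial point is the bracket-coordinate estimate $L(t)\le C\sum_a|t_a|^{1/w_a}$: it is the quantitative ``ball-box'' phenomenon describing how efficiently bracket directions are reached using only horizontal flows, and it is here that I would invoke the estimates of \cite{Montgomery}. Everything else is a routine transcription of the sub-Riemannian argument.
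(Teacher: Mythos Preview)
The paper does not actually prove this theorem: it is stated with the attribution ``(cf.\ \cite{Montgomery})'' and used as a black box, with no argument given in the text. Your sketch is a correct outline of the standard sub-Riemannian proof (Chow--Rashevskii for finiteness, the trivial comparison $\rho_{M,D,g}\ge d_{\hat g}$ for one inclusion of topologies, and the ball--box estimate for the other), and you rightly flag the bracket-coordinate length bound as the one substantive ingredient to be drawn from \cite{Montgomery}. So you have supplied more than the paper does, and what you supply is sound; the only remark is that since the paper treats this as a citation rather than a result to be proved, no comparison of approaches is really possible here.
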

Let $\rho_{N,h}$ be a distance function of $h$ on $N$.
Then Theorem \ref{Schwarz lemma for holomorphic distribution} implies that for $a,b\in N$
one has
\begin{equation}
\rho_{M,D,g}(f(a),f(b))\leq \sqrt{\frac{K_1}{K_2}}\rho_{N,h}(a,b).
\end{equation}

If the holomorphic sectional curvature of $(M,D,g)$ is bounded from above by a negative constant, 
for any holomorphic mapping $f\colon \Delta \rightarrow M$ tangential to $ D$, 
then by Corollary \ref{Alfors-Schwarz lemma for holomorphic distribution} one has
$f^* g \leq c g_\Delta$ for some positive constant $c$. 
This implies that for any $a,\,b\in \Delta$ 
$$
\rho_{M,D,g}(f(a),f(b))\leq \sqrt{c} \rho_\Delta(a,b).
$$
Suppose that $d_{M,D}(x,y)=0$ for some $x,\,y\in M$. 
This implies that there are sequences of elements $a_i,\, b_i \in \Delta$ and 
holomorphic mappings $f_i\colon \Delta\rightarrow M$ such that $f(a_i)=x,\, f(b_i)=y$ and $\rho_\Delta(a_i,b_i) \rightarrow 0$ as $i\rightarrow \infty$. Hence $\rho_{M,D,g}(x,y)=0$ and as a consequence we have $x=y$ by Theorem \ref{Carnot-Caratheodory distance}. 
Thus we obtain the following:
\begin{cor}\label{negative curvature}
Suppose that the holomorphic sectional curvature of 
$(M,D,g)$ is bounded from above by a negative constant. 
Then $(M,D)$ is Kobayashi hyperbolic. 
If the Carnot-Caratheodory distance of $g$ is complete, 
then the Kobayashi distance of $(M,D)$ is also complete.
\end{cor}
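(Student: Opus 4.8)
The plan is to bound $d_{M,D}$ from below by a fixed multiple of the Carnot--Carath\'eodory distance $\rho_{M,D,g}$ and then appeal to Theorem~\ref{Carnot-Caratheodory distance}. First I would extract from the Ahlfors--Schwarz estimate of Corollary~\ref{Alfors-Schwarz lemma for holomorphic distribution} a single constant $c>0$ depending only on the negative upper bound for the holomorphic sectional curvature of $(M,D,g)$ and \emph{not} on the disc, so that $f^*g\le c\,g_\Delta$ for every holomorphic $f\colon\Delta\to M$ tangential to $D$. Hence, for a piecewise smooth path $\gamma$ in $\Delta$ from $a$ to $b$, the curve $f\circ\gamma$ is tangential to $D$ and its $g$-length is at most $\sqrt c$ times the Poincar\'e length of $\gamma$; taking the infimum over $\gamma$ gives
\begin{equation*}
\rho_{M,D,g}(f(a),f(b))\le \sqrt c\,d_\Delta(a,b).
\end{equation*}

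Next I would pass to chains. For $x,y\in M$ and any admissible chain $x=x_0,\dots,x_N=y$ with discs $f_j$ tangential to $D$ and points $a_j,b_j\in\Delta$ such that $f_j(a_j)=x_{j-1}$ and $f_j(b_j)=x_j$, the previous inequality together with the triangle inequality for $\rho_{M,D,g}$ gives $\rho_{M,D,g}(x,y)\le \sqrt c\sum_{j=1}^N d_\Delta(a_j,b_j)$. Since $\delta_{M,D}(x_{j-1},x_j)$ is by definition the infimum of $d_\Delta(a,b)$ over all such discs and points, taking infima over all chains yields
\begin{equation}\label{comparison-proposal}
\rho_{M,D,g}(x,y)\le \sqrt c\,d_{M,D}(x,y)\qquad(x,y\in M),
\end{equation}
the right-hand side being finite on each connected component by Theorem~\ref{finiteness}.

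Now I would conclude. If $d_{M,D}(x,y)=0$, then \eqref{comparison-proposal} forces $\rho_{M,D,g}(x,y)=0$; since $D$ is bracket generating, Theorem~\ref{Carnot-Caratheodory distance} says $\rho_{M,D,g}$ is a genuine distance inducing the manifold topology, so $x=y$. Together with symmetry and the triangle inequality of Proposition~\ref{Kobayashi properties}, this makes $d_{M,D}$ a distance, i.e.\ $(M,D)$ is Kobayashi hyperbolic. For completeness, assume $\rho_{M,D,g}$ is complete and let $(x_n)$ be a $d_{M,D}$-Cauchy sequence. By \eqref{comparison-proposal} it is $\rho_{M,D,g}$-Cauchy, hence converges for $\rho_{M,D,g}$ and therefore in the manifold topology to some $p\in M$; continuity of the Kobayashi distance then gives $d_{M,D}(x_n,p)\to 0$, so $(M,D)$ is complete Kobayashi hyperbolic.

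As for where the work lies: the proof is essentially an assembly of already-established facts, and its only deep input is Theorem~\ref{Carnot-Caratheodory distance} --- the Chow--Rashevskii/Hopf--Rinow phenomenon for bracket generating distributions --- which I am allowed to quote. Within the present argument, the point deserving genuine care is the \emph{uniformity} of the constant $c$ in the first step, independent of the holomorphic disc, after which the passage to chains producing \eqref{comparison-proposal} is routine bookkeeping.
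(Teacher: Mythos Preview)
Your proposal is correct and follows the same approach as the paper: use the uniform Ahlfors--Schwarz bound from Corollary~\ref{Alfors-Schwarz lemma for holomorphic distribution} to get $\rho_{M,D,g}\le\sqrt{c}\,d_{M,D}$, then invoke Theorem~\ref{Carnot-Caratheodory distance} to conclude that $d_{M,D}$ separates points. Your version is in fact more careful than the paper's sketch---you pass through chains explicitly rather than tacitly assuming a single disc connects $x$ to $y$, and you spell out the completeness argument (Cauchy for $d_{M,D}$ $\Rightarrow$ Cauchy for $\rho_{M,D,g}$ $\Rightarrow$ convergent in the manifold topology $\Rightarrow$ convergent for $d_{M,D}$ by continuity), which the paper leaves implicit.
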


\begin{remark} \label{non-holomorphic-bracket-generating}
Suppose that $D$ is a subbundle of the holomorphic tangent bundle of $M$. 
Here $D$ does not need to be bracket generating.
Let $g$ be a Hermitian metric on the tangent bundle of $M$ and 
$\Theta$ denote the Chern curvature tensor of $g$ on $M$. 
If $\frac{g(\Theta(\zeta, \xi)\zeta, \xi)}{g(\zeta, \zeta)g(\xi,\xi)}$ 
is bounded from above by a negative constant for all $\zeta, \, \xi\in D$,
then the same argument in Theorem \ref{Schwarz lemma for holomorphic distribution} applies.
That is, for every holomorphic mapping $f \colon N\rightarrow M$ tangential to $D$, 
the inequality $f^*(g)\leq \frac{K_1}{K_2} \, h$ holds.
Furthermore Corollary \ref{Alfors-Schwarz lemma for holomorphic distribution} 
also holds as \cite[13.4.1 Schwarz's lemma]{Car-Mul-Pet}.
\end{remark}

\section{Examples of Kobayashi hyperbolic manifold $(M,D)$}\label{flag domains}
\subsection{Canonical flag domains with superhorizontal distributions}

%Let $G^{\mathbb C}$ be a connected complex semisimple Lie group 
%and $P$ a parabolic subgroup in $G^{\mathbb C}$.
%Let $Z$ denote the flag manifold $G^{\mathbb C}/P$. 
%Let $G$ be a non-compact real form of $G^{\mathbb C}$. 
%For $z_0\in Z$, if the $G$-orbit $F = G\cdot z_0$ is open in $Z$,
%then we call $F$ the {\it flag domain}.
%Suppose that $V:=G\cap P$ contains a maximal compact torus of $G$ 
%and $V$ is compact subgroup of $G$.
%Then we call the flag domain $F=G\cdot z_0 = G/V$ 
%with $z_0=eP\in G^{\mathbb C}/P$, {\it canonical flag domain}.

In this section, we only consider canonical flag domains.
Let $T$ denote a maximal torus, 
$V=G\cap P$ the isotropy group of $G$ at $z_0$ 
and $K$ the unique maximal compact subgroup of $G$ containing $V$.
Let $\mathfrak{t}\subset \mathfrak{v}\subset \mathfrak{k} \subset \mathfrak{g}$
be the Lie algebras of $T\subset V\subset K\subset G$, respectively.
Let $\mathfrak{g} = \mathfrak{k}\oplus \mathfrak{q}$ 
be the Cartan decomposition. 
The Killing form $B$ of $\mathfrak g$ is negative definite on $\mathfrak{k}$ 
and positive definite on  $\mathfrak{q}$. 
Let $ \mathfrak{h} := \mathfrak{t}^\mathbb C$ be
a Cartan subalgebra of $\mathfrak{g}^{\mathbb C}$.
Let $\Phi = \Phi(\mathfrak{g}^\mathbb C, \mathfrak{h})$ 
denote the set of roots of $\mathfrak{g}^\mathbb C$ with respect to $\mathfrak h$.
Given a root $\alpha\in \Phi$, 
let $\mathfrak{g}^\alpha\subset \mathfrak{g}^\mathbb C$ 
denote the associated root space. 
Given a subspace $\mathfrak{s}\in \mathfrak{g}^{\mathbb C}$, 
set
$$
\Phi(\mathfrak{s}) := \{\alpha\in \Phi \colon \mathfrak{g}^\alpha \subset \mathfrak{s}\}.
$$
We say that a root $\alpha$ is compact (non-compact) 
when $\alpha \in \Phi(\frak k^{\mathbb C})$ ($\alpha \in \Phi(\frak q^{\mathbb C})$).
Fix a Borel subalgebra $\frak b$ such that 
$\mathfrak{h}\subset \mathfrak{b}\subset \mathfrak{g}^\mathbb C$. 
Then 
$$
\Phi^+ = \Phi(\mathfrak{b})
$$
determines a set of positive roots. 
Let $\Phi^-$ denote the set of corresponding negative roots. 
Given a subspace $\mathfrak{s}\subset\mathfrak{g}^\mathbb C$, define 
$$
\Phi^+(\mathfrak{s}) := \Phi(\mathfrak{s})\cap \Phi^+,\quad 
\Phi^-(\mathfrak{s}) := \Phi(\mathfrak{s})\cap \Phi^-.
$$
Let $\{\alpha_1,\ldots, \alpha_r\}\subset \Phi^+ $ denote the corresponding simple roots. 
For each $\alpha\in\Phi$, one can choose vectors $e_\alpha\in \mathfrak{g}^\alpha$ and $h_\alpha \in \mathfrak h$ such that
\begin{enumerate}
\item $B(e_\alpha, e_\beta) = \delta_{\alpha, -\beta}\,\, $  and $\quad [e_\alpha, e_{-\alpha}] = h_\alpha$;
\item $B(h_\alpha, x) = \alpha(x)$ for every $x\in \mathfrak{h}$;
\item $[e_\alpha, e_\beta]=0$, if $\alpha \neq -\beta$ and $\alpha+\beta \notin \Phi$;
\item $[e_\alpha, e_\beta] = N_{\alpha,\beta}\,e_{\alpha + \beta}$, 
if $\alpha, \beta, \alpha+\beta \in \Phi$ 
where $N_{\alpha,\beta}$ are nonzero real constants such that 
$$
N_{-\alpha, -\beta} = -N_{\alpha,\beta},\quad
N_{-\alpha,-\beta} = N_{-\beta, \alpha+\beta} = N_{\alpha + \beta, -\alpha};
$$
%\item $\tau(e_\alpha) = -e_{-\alpha}$ \label{conjugation of X},
\item 
for the complex conjugate $\sigma$ of $\mathfrak{g}$ in $\mathfrak{g}^\mathbb C$,
we have
$
\sigma(e_\alpha) = \epsilon_\alpha e_{-\alpha}
$ 
where $\epsilon_\alpha = -1$ if $\alpha$ is compact and $\epsilon_\alpha = 1$ if $\alpha$ is non-compact. \label{conjugation of D}
%\item $\epsilon_{\alpha + \beta}$ whenever $\alpha, \beta, \alpha+\beta\in\Delta$.
\end{enumerate}

Let $\omega^\alpha$ be dual covectors of $e_\alpha\in \mathfrak{g}^\alpha$
in $(\mathfrak{g}^{\mathbb C})^*$.
Let $\{T^1,\ldots, T^r\}$ be the basis of $\mathfrak{h}$ 
dual to the simple roots $\{\alpha_1,\ldots, \alpha_r\}$. 
Set
$
T := \sum_{\alpha\in\Phi \setminus \Phi(\mathfrak{v}^\mathbb C)} T^i$.
Then $\mathfrak g^{\mathbb C}$ can be decomposed as
\begin{equation}
\mathfrak{g}^\mathbb C = \mathfrak{g}_k\oplus \cdots \oplus \mathfrak{g}_1\oplus \mathfrak{g}_0\oplus \mathfrak{g}_{-1}\oplus\cdots \oplus \mathfrak{g}_{-k},
\end{equation}
where $\mathfrak{g}_l = \{\zeta\in \mathfrak{g}^\mathbb C : [T, \zeta] = l\zeta\}$ with $\l\in \ZZ$. Notice that $[\frak g_i, \frak g_j]\subset \frak g_{i+j}$ and 
\begin{equation}\label{odd even}
\mathfrak{g}_0 = \mathfrak{v}^\mathbb C, \
\mathfrak k^{\mathbb C} = \sum_{i \text{ even}} \mathfrak g_i \ \text{ and } \
\mathfrak q^{\mathbb C} = \sum_{i \text{ odd }}\mathfrak g_i. 
\end{equation}
Denote 
$\mathfrak{g}_+ = \bigoplus_{j> 0} \mathfrak{g}_j$ and 
$\mathfrak{g}_- = \bigoplus_{j<0} \mathfrak{g}_j$.
Then a homogeneous complex structure on $F$ is given by specifying 
$T^\CC F = TF\otimes \CC = T^{1,0}F \oplus T^{0,1}F$ with $
T^{1,0}F = G\times_V \mathfrak{g}_-$ and $T^{0,1}F = G\times_V \mathfrak{g}_+,
$
where $T^{1,0}F$ is the holomorphic tangent bundle of $F$.
Let \begin{equation}
H_F:= G\times_V \mathfrak{g}_{-1}
\end{equation}
 denote  the {\it superhorizontal distribution} given in \cite[Chapter 4]{Burstall_Rawnsley}.
$H_F$ is the unique HBGD contained in $G\times_V\frak q$.
 Note that if $F$ is a Hermitian symmetric space of non-compact type, 
 then $H_F$ is the holomorphic tangent bundle $T_F$.
Let $g$ be the G-invariant Hermitian metric on $H_F$ defined by 
\begin{equation}\label{metric on flag domain}
g(\zeta, \xi) = B(\zeta,\sigma(\xi)).
\end{equation}
By the expression of Chern curvature given in \cite{Griffiths-Schmid},
we obtain
\begin{equation}\label{curvature}
\begin{aligned}
\Theta_F = -\sum_{\alpha, \beta\in \Phi^+ \setminus \Phi^+({\mathfrak{k}^\mathbb C})}& \text{ad}_{[ e_\alpha, e_{-\beta}]_{\mathfrak{v}^\mathbb C}} \otimes \omega^\alpha \wedge \overline{\omega}^\beta \\
  &
+\sum_{\alpha, \beta\in \Phi^+(\mathfrak{k}^\mathbb C)\setminus\Phi^+(\mathfrak{v}^\mathbb C)} \text{ad}_{[ e_\alpha, e_{-\beta}]_{\mathfrak{v}^\mathbb C}} \otimes \omega^\alpha \wedge \overline{\omega}^\beta,
\end{aligned}
\end{equation}
where $[e_{\alpha}, e_{-\beta}]_{\mathfrak v^{\mathbb C}}$ denotes the projection of $[e_\alpha, e_{-\beta}]$ onto $\mathfrak v^{\mathbb C}$. 
For $\zeta, \xi \in H_F|_{eV}$, the holomorphic bisectional curvature of $(F, H_F,g)$ is given by
\begin{equation}
\text{Bisec}_{F,H_F}(\zeta,\xi) 
=- \frac{B([[\zeta,\sigma(\zeta)],\xi],\sigma(\xi))}{g(\zeta,\zeta) g(\xi,\xi)}
=-\frac{B([\zeta,\sigma(\zeta)],[\xi, \sigma(\xi)])}{g(\zeta,\zeta) g(\xi,\xi)}.
\end{equation}

Let $\zeta\in \mathfrak g_{-1}$. 
Then 
$
[\zeta, \sigma(\zeta)] 
\in \sqrt{-1}\mathfrak{v}$. 
Note that $B$ is positive definite on $\sqrt{-1}\mathfrak{v}$.
Hence, there should be a positive constant $c$ such that 
\begin{equation}
H_{F,H_F}(\zeta)
=-\frac{B([\zeta,\sigma(\zeta)],[\zeta, \sigma(\zeta)])}{g(\zeta,\zeta)^2}
<-c<0.
\end{equation}
In view of Corollary \ref{negative curvature}, of the homogeneity of $F$ 
and of the ball-box theorem given in \cite[Theorem 2.4.2]{Montgomery}, we obtain the following.
\begin{thm}\label{Flag domain}
 Canonical flag domains with superhorizontal distributions are complete Kobayashi hyperbolic.
\end{thm}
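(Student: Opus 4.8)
The plan is to derive the theorem directly from Corollary \ref{negative curvature}, for which two ingredients must be assembled: a \emph{uniform} negative upper bound on the holomorphic sectional curvature of $(F,H_F,g)$, and completeness of the associated Carnot--Carath\'eodory distance $\rho_{F,H_F,g}$.

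First I would upgrade the pointwise estimate $H_{F,H_F}(\zeta)<-c<0$, valid for $0\neq\zeta\in H_F|_{eV}$, to a bound over all of $F$. The quantity $-B([\zeta,\sigma(\zeta)],[\zeta,\sigma(\zeta)])/g(\zeta,\zeta)^2$ is homogeneous of degree $0$ in $\zeta$, so it suffices to bound it on the unit sphere of the finite-dimensional space $H_F|_{eV}$; compactness of that sphere and continuity of the expression give a constant $K<0$ with $H_{F,H_F}(\zeta)\le K$ for every $\zeta\in H_F|_{eV}$. Since both $H_F$ and the metric $g$ in \eqref{metric on flag domain} are $G$-invariant and $G$ acts transitively on $F$, both the Chern curvature tensor and the holomorphic sectional curvature are carried by the $G$-action from $H_F|_{eV}$ to $H_F|_p$ for every $p\in F$; hence $H_{F,H_F}\le K<0$ on all of $F$. (In passing one checks that $g$ is genuinely positive definite: for a non-compact root $\alpha$ occurring in $\mathfrak g_{-1}$ one has $g(e_\alpha,e_\alpha)=B(e_\alpha,\sigma(e_\alpha))=\epsilon_\alpha B(e_\alpha,e_{-\alpha})=1$.)

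Next I would show that $\rho_{F,H_F,g}$ is a complete metric. Because $H_F$ is bracket generating, Theorem \ref{Carnot-Caratheodory distance} already gives that $\rho_{F,H_F,g}$ is finite, continuous, and induces the manifold topology; only completeness remains. Applying the ball--box theorem \cite[Theorem 2.4.2]{Montgomery} near $eV$, there is $r_0>0$ such that the closed $\rho_{F,H_F,g}$-ball of radius $r_0$ about $eV$ lies in a compact set, hence is itself compact. Transporting this by the $G$-action, which acts by $\rho_{F,H_F,g}$-isometries since $H_F$ and $g$ are $G$-invariant, we conclude that the closed ball of radius $r_0$ about every point of $F$ is compact. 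This uniform local compactness forces completeness by the standard argument: a $\rho_{F,H_F,g}$-Cauchy sequence is eventually contained in a single closed ball of radius $r_0$, hence in a compact set, hence has a convergent subsequence, and a Cauchy sequence with a convergent subsequence converges.

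With both ingredients in place, Corollary \ref{negative curvature} applies: the negative upper bound $K$ on the holomorphic sectional curvature gives that $(F,H_F)$ is Kobayashi hyperbolic, and completeness of $\rho_{F,H_F,g}$ gives that $d_{F,H_F}$ is complete, which is exactly the assertion of the theorem. The step I expect to be the main obstacle is the completeness of the Carnot--Carath\'eodory distance --- specifically, extracting uniform local compactness of metric balls from the ball--box theorem together with homogeneity, and running the sub-Riemannian Hopf--Rinow argument cleanly; the curvature bound, by contrast, reduces to the finite-dimensional compactness remark combined with $G$-homogeneity.
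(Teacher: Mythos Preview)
Your proposal is correct and follows essentially the same route as the paper. The paper's own argument is compressed into the single sentence ``In view of Corollary \ref{negative curvature}, of the homogeneity of $F$ and of the ball-box theorem given in \cite[Theorem 2.4.2]{Montgomery}, we obtain the following''; you have simply unpacked those three ingredients in the way the author intended --- homogeneity to globalize the curvature bound from $H_F|_{eV}$, and homogeneity together with the ball--box theorem to obtain completeness of $\rho_{F,H_F,g}$ --- before invoking Corollary \ref{negative curvature}.
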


\begin{cor}
Let $\Gamma$ be an uniform lattice of $G$, 
i.e., $\Gamma\setminus G$ is compact.
Then $\Gamma\setminus F$, with the distribution induced by
 the superhorizontal distribution of $F$, is compact  and complete Kobayashi hyperbolic.
\end{cor}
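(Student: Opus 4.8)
The plan is to exhibit $\Gamma\setminus F$ as a quotient of $F$ inheriting all the relevant structure, and then to conclude either from Theorem \ref{Flag domain} through the behavior of the Kobayashi pseudo-distance under coverings, or directly from Corollary \ref{negative curvature}. First I would record compactness: writing $F=G/V$ with $V=G\cap P$ compact, one has $\Gamma\setminus F=\Gamma\setminus G/V$, so $\Gamma\setminus F$ is the image of the compact space $\Gamma\setminus G$ under the continuous quotient map $\Gamma\setminus G\to\Gamma\setminus G/V$, hence compact. To make sense of $\Gamma\setminus F$ as a complex manifold I would use that $\Gamma$ acts properly discontinuously on $F$ (the $G$-action on $F$ has the compact isotropy $V$ and $\Gamma$ is discrete in $G$) and, since $G$ is linear, replace $\Gamma$ if necessary by a torsion-free finite-index subgroup via Selberg's lemma; alternatively one works in the orbifold category. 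Then $\pi\colon F\to\Gamma\setminus F$ is a holomorphic covering map.

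Next I would descend the geometry. The superhorizontal distribution $H_F=G\times_V \mathfrak{g}_{-1}$ and the Hermitian metric $g$ of \eqref{metric on flag domain} are $G$-invariant, hence $\Gamma$-invariant, so they descend to a holomorphic distribution $D$ on $\Gamma\setminus F$ and a Hermitian metric $g'$ on $D$ with $d\pi^{-1}(D)=H_F$ and $\pi^*g'=g$. Bracket generation is a local condition, preserved by the local biholomorphism $\pi$, so $D$ is a holomorphic bracket generating distribution on $\Gamma\setminus F$; and since $\pi$ is a local isometry between $(H_F,g)$ and $(D,g')$, the holomorphic sectional curvature of $(\Gamma\setminus F,D,g')$ is bounded from above by the same negative constant $-c$ obtained for $(F,H_F,g)$ in Section \ref{flag domains}.

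Finally, to obtain complete Kobayashi hyperbolicity I would argue in either of two ways. Since $\Gamma\setminus F$ is compact, its Carnot-Caratheodory distance $\rho_{\Gamma\setminus F,D,g'}$ (finite and inducing the manifold topology by Theorem \ref{Carnot-Caratheodory distance}) is complete, so Corollary \ref{negative curvature} applies and gives that $(\Gamma\setminus F,D)$ is complete Kobayashi hyperbolic; alternatively, by the covering-manifold property of the Kobayashi pseudo-distance established in Section \ref{definition and properties} (which applies to any covering, not only the universal one), $(\Gamma\setminus F,D)$ is complete Kobayashi hyperbolic if and only if $(F,H_F)$ is, and the latter is exactly Theorem \ref{Flag domain}. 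I do not expect a genuine obstacle in any step; the only point requiring care is the reduction to a torsion-free $\Gamma$ (or the explicit use of orbifolds), together with the routine observation that bracket generation and the negative upper bound on the holomorphic sectional curvature, being local, transfer verbatim along $\pi$.
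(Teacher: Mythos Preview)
The paper states this corollary without proof, treating it as an immediate consequence of Theorem~\ref{Flag domain} together with the covering-manifold proposition in Section~\ref{definition and properties}. Your argument is correct and follows exactly that implicit route (with the curvature alternative via Corollary~\ref{negative curvature} as a second path); you are simply more careful than the paper in spelling out compactness, the descent of $H_F$ and $g$, and especially the torsion issue via Selberg's lemma, which the paper does not address at all.
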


Suppose that $F$ is not a Hermitian symmetric space of non-compact type. 
In \cite[Lemma 5.1]{Wolf}, Wolf showed that there exists the unique closed $K^{\mathbb C}$-orbit in $F$.
Since it is a compact complex homogeneous submanifold of $F$,
this implies that $F$ is not Kobayashi hyperbolic.
This $K^{\mathbb C}$-orbit is called the {\it base cycle} of $F$.

\begin{cor}
There is no holomorphic embedding from a flag domain 
which is not Hermitian symmetric into the canonical flag domain 
tangential to the superhorizontal distribution.
\end{cor}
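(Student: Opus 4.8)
The plan is to argue by contradiction, combining the complete Kobayashi hyperbolicity of $(F,H_F)$ from Theorem \ref{Flag domain} with the behaviour of the Kobayashi pseudo-distance under embeddings (Proposition \ref{embedding}), and then to clash with the presence of a compact complex homogeneous submanifold --- the base cycle --- inside any flag domain that is not Hermitian symmetric.

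Suppose, for contradiction, that $\iota\colon F'\to F$ is a holomorphic embedding, where $F'$ is a flag domain that is not biholomorphic to a Hermitian symmetric space of noncompact type, $F$ is a canonical flag domain, and $\iota$ is tangential to the superhorizontal distribution, i.e.\ $d\iota(T_{F'})\subset H_F$. Regard the full holomorphic tangent bundle $T_{F'}$ as a holomorphic bracket generating distribution on $F'$; it is trivially bracket generating, and for this choice $(F',T_{F'})$ carries exactly the ordinary Kobayashi pseudo-distance $d_{F'}$. The condition $d\iota(T_{F'})\subset H_F$ is precisely the hypothesis $d\iota(D_1)\subset D_2$ of Proposition \ref{embedding} with $(M_1,D_1)=(F',T_{F'})$ and $(M_2,D_2)=(F,H_F)$. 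Since $(F,H_F)$ is (complete) Kobayashi hyperbolic by Theorem \ref{Flag domain}, Proposition \ref{embedding} gives that $(F',T_{F'})$, that is $F'$ with its usual Kobayashi pseudo-distance, is Kobayashi hyperbolic.

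This contradicts the non-Hermitian-symmetric assumption. Because $F'$ is a flag domain that is not Hermitian symmetric of noncompact type, Wolf's lemma \cite[Lemma 5.1]{Wolf} produces the base cycle of $F'$: the unique closed orbit of the complexified maximal compact subgroup, which is a positive-dimensional compact complex homogeneous submanifold $Z\subset F'$. As already used above for $F$ itself, such a $Z$ is not Kobayashi hyperbolic, whereas a closed complex submanifold of a Kobayashi hyperbolic manifold is again Kobayashi hyperbolic; hence $F'$ cannot be Kobayashi hyperbolic, a contradiction. Therefore no such embedding $\iota$ exists.

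Being a short deduction, there is no serious analytic obstacle; the only points that need a little care are that taking $D_1=T_{F'}$ legitimately reduces the conclusion of Proposition \ref{embedding} to ordinary hyperbolicity of $F'$, and that the non-Hermitian-symmetric hypothesis genuinely yields a \emph{positive-dimensional} base cycle rather than a point --- which it does, since the base cycle collapses to a point exactly when the isotropy $V$ of $G$ at the base point coincides with the maximal compact $K$, i.e.\ precisely in the Hermitian symmetric case. Alternatively, one may apply Proposition \ref{embedding} directly to $\iota|_Z\colon Z\to F$, using $T_Z\subset T_{F'}|_Z$, to force $Z$ itself to be Kobayashi hyperbolic and reach the same contradiction without passing through $F'$.
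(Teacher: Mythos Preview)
Your proof is correct and follows essentially the same route as the paper's: both argue by contradiction, invoke the base cycle of a non-Hermitian-symmetric flag domain (via \cite[Lemma 5.1]{Wolf}) to produce two distinct points with vanishing ordinary Kobayashi pseudo-distance, and use the distance-decreasing/embedding principle (Proposition \ref{embedding}) together with Theorem \ref{Flag domain} to reach a contradiction with the Kobayashi hyperbolicity of $(F,H_F)$. The only cosmetic difference is where the contradiction is placed: the paper pushes the two base-cycle points forward via $\iota$ and contradicts hyperbolicity of $(F,H_F)$ directly, whereas you first conclude hyperbolicity of $F'$ from Proposition \ref{embedding} and then contradict it via the base cycle; your closing remark already notes this alternative.
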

\begin{proof}
Let $F_1,\, F_2$ be flag domains where $F_1$ is not a Hermitian symmetric space,
and $D_2$ is the superhorizontal distribution of $F_2$.
Let $\iota\colon F_1\rightarrow F_2$ be a holomorphic embedding tangential to $D_2$. 
Then by Proposition \ref{embedding}, one has $d_{F_2, D_2}(\iota(x),\iota(y))\leq d_{F_1}(x,y)$.
Let $x\neq y$ in the base cycle of $F_1$. Then $d_{F_1}(x,y)=0$ 
and this induces the contradiction to that $(F_2, D_2)$ is Kobayashi hyperbolic.
\end{proof}
\begin{remark}
In \cite{Car-Mul-Pet}, the authors proved that the Chern connection on $T_F$ 
with respect to the same Hermitian metric as in \eqref{metric on flag domain} has 
holomorphic sectional curvature bounded from above 
by a negative constant in non-compact direction 
$\mathfrak q^{\mathbb C}$.
Let $\iota\colon F_1\rightarrow F_2$ be a holomorphic embedding tangential to $D_2$.
Then, by Remark \ref{non-holomorphic-bracket-generating} we can obtain that 
$\rho_{F_2, T_{F_2}, g}(\iota(x), \iota(y)) \leq \rho_{F_2,D_2,g}(\iota(x), \iota(y))
\leq c d_{F_1}(x,y)$ for a positive constant $c$.
Hence, there is no holomorphic embedding from a flag domain which
is not Hermitian symmetric into the canonical flag domain tangential to the $\mathfrak q^{\mathbb C}$-direction.
\end{remark}

\subsection{Kobayashi hyperbolicity of the complex Euclidean spaces}
In \cite{Forstneric_2016}, Forstneric constructed a holomorphic contact distribution $D$ 
on $\mathbb C^{2n+1}$ such that $(\mathbb C^{2n+1}, D)$ is Kobayashi hyperbolic 
for any $n\in \mathbb N$.
We generalize his construction to some HBGD on complex Euclidean spaces.

Let $D$ be a HBGD on $\mathbb C^n$ of rank $d$.
Since $\mathbb C^n$ is a contractible Stein manifold,
by the Grauert-Oka principle, both $D$ and $T_{\mathbb C^n}/D$ are holomorphically trivial vector bundles.
Let $X_1,\ldots, X_d, X_{d+1}, \ldots, X_n$ be a global frame of $\mathbb C^n$
such that $X_1,\ldots, X_d$ is a frame of $D$.
Let $X_j = \sum_{i=1}^n X_{ji} \frac{\partial }{\partial z_i}$ for each $j=1,\ldots, n$
with holomorphic functions $X_{ji}$ on $\mathbb C^n$.
Denote the matrix $(X_{ji})_{j,i=1,\ldots, n}$ by $X$.
\begin{eqnarray}\label{invertible assumption}
&\text{ Suppose that there is a } d\times d \text{ submatrix in } (X_{ji})_{j=1,\ldots d, i=1,\ldots, n}\\\nonumber
&\quad\quad\text{ which is invertible at every point in } \mathbb C^n.
\end{eqnarray}

Then there is a Fatou-Bieberbach map $\Phi\colon \mathbb C^n\rightarrow \mathbb C^n$
such that $\left( \mathbb C^n, d\Phi^{-1}(D|_{\Phi(\mathbb C^n)}) \right)$ is Kobayashi hyperbolic.
This can be achieved through Lemma \ref{lem1} and Lemma \ref{lem2}.
Those proof are analogue to the corresponding lemmas of Forstneric.
Note that the condition \eqref{invertible assumption} is not true in general.

\begin{lem}\label{lem1}\textup{(Generalization of }\cite[Lemma 2.1]{Forstneric_2016}\textup{)} 
Let $D$ be a holomorphic distribution on $\mathbb C^n$ of rank $d$ satisfying 
\eqref{invertible assumption}.
Then, for each $N\in \mathbb N$, there exists $C_N\in \mathbb R$ such that 
for any holomorphic horizontal disc 
$f \colon \Delta \rightarrow \left( \mathbb C^n /K , D|_{\mathbb C^n /K}\right)$
with $f(0) \in 2^{N_0}\Delta^n \subset \mathbb C^n$ for some $N_0$ and
\begin{equation}
K := \bigcup^{\infty}_{N=1} 2^{N-1} \partial\Delta^d \times C_N\Delta^{n-d},
\end{equation}
we have $|f'(0)| < c_{N_0}$.
Here the constant $c_{N_0}$ only depends on $N_0$.
\end{lem}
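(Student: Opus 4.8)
The plan is to adapt Forstneri\v{c}'s argument from \cite{Forstneric_2016} to the bracket-generating setting. First I would fix $N_0$ and a horizontal disc $f\colon\Delta\to(\mathbb C^n/K,D|_{\mathbb C^n/K})$ with $f(0)\in 2^{N_0}\Delta^n$. Writing $f=(f_1,\dots,f_n)$ and using that $f$ is tangential to $D$, the derivative $f'(0)$ satisfies $f'(0)=\sum_{j=1}^d c_j\,X_j|_{f(0)}$ for some coefficients $c_j\in\mathbb C$. By the invertibility assumption \eqref{invertible assumption}, some $d\times d$ submatrix of the first $d$ rows of $X$ is invertible everywhere; so $|f'(0)|$ is comparable, with constants depending only on a bound for $\|X\|$ on $2^{N_0}\Delta^n$ (which in turn depends only on $N_0$), to $\bigl(\sum_j|c_j|^2\bigr)^{1/2}$. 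Hence it suffices to bound the tuple $(c_1,\dots,c_d)$, equivalently to bound $|f'(0)|$ in terms of the behaviour of the first $d$ coordinate functions of $f$.

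Next I would choose the constants $C_N$ inductively so that the removed ``box walls'' $2^{N-1}\partial\Delta^d\times C_N\Delta^{n-d}$ act as barriers. The point is that the first $d$ components $(f_1,\dots,f_d)\colon\Delta\to\mathbb C^d$ must avoid the hypersurfaces $2^{N-1}\partial\Delta^d$ whenever the last $n-d$ components land inside $C_N\Delta^{n-d}$. Starting from $f(0)\in 2^{N_0}\Delta^n$, a standard one-variable argument (using the Schwarz–Pick estimate for the functions $f_\alpha$, $\alpha>d$, on the subdisc where they are small, together with continuity) shows that there is a definite subdisc $r\Delta$, with $r=r(N_0)>0$, on which the tail $(f_{d+1},\dots,f_n)$ stays in $C_{N_0}\Delta^{n-d}$; on that subdisc $(f_1,\dots,f_d)$ therefore omits the torus $2^{N_0}\partial\Delta^d$ and, being continuous and starting inside $2^{N_0}\Delta^d$, stays inside $2^{N_0}\Delta^d$. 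So on $r\Delta$ the map $(f_1,\dots,f_d)$ is a bounded holomorphic map into a fixed polydisc, and the Cauchy estimate on $r\Delta$ gives $|f_i'(0)|\le 2^{N_0}/r$ for $i\le d$. Combined with the first paragraph this yields $|f'(0)|<c_{N_0}$ with $c_{N_0}$ depending only on $N_0$.

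The choice of $C_N$ must be made so that the above is self-consistent for every possible starting annulus $2^{N}\Delta^n$, not just the fixed $N_0$: one selects $C_1,C_2,\dots$ recursively, each $C_{N+1}$ large enough (depending on $C_1,\dots,C_N$ and on sup-norm bounds for $X$ on $2^{N+1}\Delta^n$) that a horizontal disc whose image, projected to the last $n-d$ coordinates, ever exceeds $C_{N+1}$ in modulus is forced by the same Schwarz-lemma trapping argument to have already crossed one of the inner walls $2^{M}\partial\Delta^d\times C_M\Delta^{n-d}$ for some $M\le N$ — which is impossible since those walls lie in $K$. This recursive bookkeeping, ensuring that the barriers at different scales do not interfere, is the part I expect to be the main obstacle; it is exactly the place where \cite{Forstneric_2016} does the delicate estimate, and the only genuinely new point here is replacing the explicit contact frame by the frame $X_1,\dots,X_d$ and controlling it via \eqref{invertible assumption}, which costs only the extra $N_0$-dependent constants coming from $\|X\|_{2^{N_0}\Delta^n}$. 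The bracket-generating hypothesis itself is not used in this lemma; it will enter only later, through Theorem \ref{finiteness} and Corollary \ref{negative curvature}, when one assembles the Fatou–Bieberbach map.
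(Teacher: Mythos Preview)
Your overall strategy---follow Forstneri\v{c}, trap the first $d$ components between the walls $2^{N-1}\partial\Delta^d$, and read off a Cauchy bound---is correct, and you are right that the bracket-generating hypothesis plays no role here. But the second paragraph contains a genuine gap: you cannot invoke Schwarz--Pick on the tail components $f_\alpha$ ($\alpha>d$) to produce a definite subdisc $r(N_0)\Delta$ on which they stay in $C_{N_0}\Delta^{n-d}$. Schwarz--Pick requires a bounded target, and the $f_\alpha$ are a priori unbounded holomorphic functions on $\Delta$; continuity from $|f_\alpha(0)|<2^{N_0}$ gives \emph{some} subdisc, not one of radius depending only on $N_0$.

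What is missing is the horizontality relation as an identity on all of $\Delta$, not just at $0$. The paper makes this explicit: after putting the frame into block form
\[
X=\begin{pmatrix}\Pi_1&\Pi_2\\0&\Pi_4\end{pmatrix}
\]
(possible by \eqref{invertible assumption} after permuting coordinates), the condition $f^*\omega_\alpha=0$ for $\alpha>d$ yields
\[
(f_{d+1}',\dots,f_n')^t \;=\; -\bigl((\pi_4)^{-1}\pi_3\bigr)\!\circ f \;(f_1',\dots,f_d')^t,
\]
with $(\pi_4)^{-1}\pi_3$ a fixed holomorphic matrix on $\mathbb C^n$. This is the exact analogue of Forstneri\v{c}'s contact relation $f_z'=\sum f_{y_i}f_{x_i}'$, and it is what makes the feedback loop in your third paragraph actually run: as long as $f$ stays in $2^N\Delta^n$ the tail derivatives are bounded by a fixed multiple of the head derivatives, the head is trapped by the walls as long as the tail is below $C_N$, and choosing $C_N>2^N+d\cdot 2^{2N}\sup_{2^N\Delta}|((\pi_4)^{-1}\pi_3)_{ji}|$ closes the induction exactly as in \cite{Forstneric_2016}. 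Your pointwise expression $f'(0)=\sum c_j X_j|_{f(0)}$ is the germ of this, but you need it globally on $\Delta$; and it is this relation---not Schwarz--Pick on the $f_\alpha$---that controls the tail.
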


\begin{proof}

By permutation of coordinates we can assume that $X_1:=(X_{ji})_{j,i=1,\ldots, d}$ satisfies 
$\det X_1\neq 0$ for each point of $\mathbb C^n$.
Furthermore, by the condition \eqref{invertible assumption}, we can assume that $X$ has the following form
\begin{equation}
X = \bordermatrix{~ & d & n-d \cr
                  \,d & \Pi_1 & \Pi_2 \cr
                  n-d & 0 & \Pi_4 \cr}.
\end{equation}
Then the dual frame $\omega_1,\ldots, \omega_n$ of $X_1,\ldots, X_n$ is given by
$(X^{-1})^t dz$ where $dz = (dz_1,\ldots, dz_n)^t$.
Note that \begin{equation}
(X^{-1})^t  := \bordermatrix{~ & d & n-d \cr
                  \,d & \pi_1 & 0 \cr
                  n-d & \pi_3 & \pi_4 \cr}
= \bordermatrix{~ & d & n-d \cr
                  \,d & (\Pi_1^{-1})^t & 0 \cr
                  n-d & -(\Pi_4^{-1})^t \Pi_2^t  (\Pi_1^{-1})^t  & (\Pi_4^{-1})^t  \cr}.
\end{equation}
Let $f = (f_1, \ldots, f_n)\colon \Delta \rightarrow 
\left( \mathbb C^n /K , D|_{\mathbb C^n /K}\right)$ be a holomorphic horizontal disc 
satisfying $f(0) \in 2^{N_0}\Delta^n \subset \mathbb C^n$ for some $N_0$.
Since $f$ is a horizontal disc, we have $f^* \omega_j = 0$ for each $j=d+1, \ldots, n$.
This yields
\begin{equation}\label{eq1}
\left( f_{d+1}',\ldots,  f_{n}'\right)^t
= -\left( (\pi_4 )^{-1}\pi_3\right)\circ f \left( f_1',\ldots,  f_d'\right)^t.
\end{equation}
Note that each entry of the matrix $\left( (\pi_4 )^{-1}\pi_3\right)$ is a holomorphic 
function on $\mathbb C^n$. 
Choose a constant $C_N$ such that \begin{equation}
C_N > 2^{N} + d \,2^{2N}\sup_{\substack{z\in 2^N \Delta, \\j,i = 1,\ldots, n-d}}|
\left( (\pi_4 )^{-1}\pi_3\right)_{ji}(z)|.
\end{equation}
Then, by the same argument given by Forstneric in \cite{Forstneric_2016}, we obtain the lemma.
\end{proof}

\begin{lem}\label{lem2}
\textup{(Generalization of }\cite[Theorem 3.1]{Forstneric_2016}\textup{)} 
Let $0<a_1<b_1<a_2<b_2<\cdots $ and $c_i>0$ be sequences of real numbers such that 
$\lim_{i\rightarrow \infty} a_i= \lim_{i\rightarrow \infty}b_i = \infty$.
Let $n>1$ be an integer and set
\begin{equation}
K := \bigcup_{i=1}^\infty \left( b_i\overline \Delta^d \setminus a_i \Delta^{d}\right) \times c_i\overline \Delta^{n-d} \subset \mathbb C^n.
\end{equation}
Then there exists a Fatou-Bieberbach domain $\Omega\subset \mathbb C^n \setminus K$.
\end{lem}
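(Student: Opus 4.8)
The plan is to adapt Forstneric's construction of Fatou--Bieberbach domains avoiding a prescribed closed set, carrying along the extra $c_i\overline\Delta^{n-d}$ directions. The key point is that $K$ is a countable union of compact pieces $K_i := (b_i\overline\Delta^d\setminus a_i\Delta^d)\times c_i\overline\Delta^{n-d}$, and these pieces are ``separated'' in the first $d$ coordinates by the concentric annular gaps $\{a_i<|z'|<b_i\}$, while the remaining coordinates stay in a fixed polydisc shell. First I would recall the standard mechanism: one builds $\Omega$ as the basin of attraction at the origin of a holomorphic automorphism $\Psi$ of $\CC^n$ (or of a composition of shears) so that the forward orbit of every point of $K$ escapes to infinity, hence $K\cap\Omega=\emptyset$; then $\Omega$ is a Fatou--Bieberbach domain (biholomorphic to $\CC^n$, since $n>1$) disjoint from $K$.

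Concretely, the plan is to produce, by an Anders\'en--Lempert-type approximation, automorphisms that are close to the identity on larger and larger polydiscs $P_j = R_j\overline\Delta^n$ and that push the finitely many pieces $K_1,\dots,K_{m(j)}$ meeting $P_j$ radially outward in the first $d$ coordinates into the ``gap region''. Because $K_i$ lives over the annulus $a_i<|z'|<b_i$ in the $z'=(z_1,\dots,z_d)$-variables, and these annuli are pairwise disjoint with the origin in the innermost hole, one can choose a shear of the form $(z',z'')\mapsto(\lambda(z',z'')z',z'')$, or more carefully a composition of overshears fixing the fibers $\{z''=\text{const}\}$, that expands radially in $z'$ on a neighborhood of the relevant $K_i$ while being nearly the identity near the origin; this is exactly Forstneric's argument with the single coordinate $z\in\CC$ replaced by the vector $z'\in\CC^d$ and an inert parameter $z''\in\CC^{n-d}$. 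I would then invoke the usual telescoping/limit argument (as in Forstneric's Theorem 3.1, or Rosay--Rudin): the infinite composition, suitably controlled, converges uniformly on compacta to an automorphism $\Psi$ whose basin at $0$ is the desired $\Omega$, and by construction every point of each $K_i$ has been moved, at some finite stage, outside all subsequent polydiscs, hence escapes; so $\Omega\subset\CC^n\setminus K$.

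The main obstacle I anticipate is purely technical bookkeeping rather than conceptual: one must verify that the radial expansion in the $z'$-variables can be carried out by genuine \emph{holomorphic} automorphisms of $\CC^n$ (not merely smooth maps), uniformly close to the identity near the origin, while simultaneously (i) not dragging $K_i$ across a smaller annulus $\{|z'|<a_i\}$ where an earlier piece sits, and (ii) keeping the $z''$-components in control so that the compact slabs $c_i\overline\Delta^{n-d}$ don't interfere. Point (i) is handled because the expansion is radial and monotone in $|z'|$, so pieces stay correctly ordered; point (ii) is handled because the automorphisms can be taken to fix the $z''$-coordinates (they are overshears in the $z'$-directions only), so the $z''$-slab structure is preserved verbatim. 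The convergence of the infinite composition and the identification of the basin with a Fatou--Bieberbach domain are then word-for-word as in \cite{Forstneric_2016}, which is why the statement is phrased as a generalization of \cite[Theorem 3.1]{Forstneric_2016}; accordingly I would present the proof by indicating these modifications and referring to \cite{Forstneric_2016} for the estimates that go through unchanged.
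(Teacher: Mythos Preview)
The paper does not actually supply a proof of this lemma: it merely states it as a generalization of \cite[Theorem~3.1]{Forstneric_2016}, having announced just before Lemma~\ref{lem1} that ``Those proof are analogue to the corresponding lemmas of Forstneric.'' Your proposal is therefore exactly in the spirit of what the paper intends---adapt Forstneric's push-out construction verbatim, replacing the single complex coordinate by the block $z'\in\CC^d$ and carrying the inert block $z''\in\CC^{n-d}$---and is the same approach.

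Two small points of wording to tighten if you carry this out. First, the limit of the inductive compositions in the Anders\'en--Lempert push-out scheme is an injective holomorphic map onto $\CC^n$ (a Fatou--Bieberbach map), not an automorphism $\Psi$ of $\CC^n$ with an attracting basin; the domain $\Omega$ is the set on which the sequence of compositions stays bounded, and it is identified with $\CC^n$ via that limit map. Second, the explicit ``radial overshear'' $(z',z'')\mapsto(\lambda(z',z'')z',z'')$ you mention is not an automorphism of $\CC^n$; the actual mechanism, as in \cite{Forstneric_2016}, is an isotopy of injective holomorphic maps defined near the relevant polynomially convex compact (a small polydisc about the origin together with a suitable piece that one wishes to push away), approximated by genuine automorphisms via the Anders\'en--Lempert theorem. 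With these adjustments your outline matches Forstneric's proof, which is all the paper invokes.
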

%\begin{proof}
%The same proof in \cite{Forstneric_2016} can be applied with
%\begin{equation}
%\begin{aligned}
%\phi_k(z_1,z_2,\ldots, z_n) &=\left(z_1, z_2+f_k(z_1), z_3 + f_k(z_2), \ldots, z_n + f_k(z_{n-1}\right),\\
%\psi_k(z_1,z_2,\ldots, z_n)& =\left(z_1 + g_k(z_2), z_2+g_k(z_3),  \ldots, z_{d-1} + g_k(z_{d-1}), z_{d+1}, %\ldots, z_n \right).
%\end{aligned}
%\end{equation}
%\end{proof}

\begin{example}
For each $n\geq 3$, there exists a holomorphic bracket generating distribution 
$D$ on $\mathbb C^n$ such that 
$(\mathbb C^n, D)$ is Kobayashi hyperbolic.
\end{example}

\section{Kobayashi hyperbolic Homogeneous manifolds}\label{homogeneous manifolds}
\label{homogeneous}
\subsection{Lie algebra structures}\label{graded Lie algebra}
Let $(M,D)$ be a Kobayashi hyperbolic $G$-homogeneous complex manifold 
with an invariant HBGD {with a linear semisimple lie group $G$.}
Let $V$ denote the isotropy subgroup of $G$ at $x_0\in M$ 
so that $M$ is biholomorphic to $G/V$.
\begin{lem}
$V$ is compact. 
\end{lem}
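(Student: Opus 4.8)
The plan is to use the structure we have already accumulated for a Kobayashi hyperbolic $(M,D)$ together with the representation theory of the isotropy action. Write $M = G/V$. By Proposition \ref{compact isotropy} the group $\mathrm{Aut}(M,D)$ is a Lie group with compact isotropy subgroups; since $G$ acts almost effectively by holomorphic diffeomorphisms preserving $D$, there is a natural (almost injective) homomorphism $G \to \mathrm{Aut}(M,D)$, and $V$ maps into the isotropy subgroup of $\mathrm{Aut}(M,D)$ at $x_0$, which is compact. Thus the image of $V$ has compact closure; because $G$ is linear semisimple and the kernel of $G \to \mathrm{Aut}(M,D)$ is discrete (almost effectivity), it suffices to show that a subgroup of $G$ whose image in $\mathrm{Aut}(M,D)$ is relatively compact is itself relatively compact, hence (being closed in $G$, as an isotropy group) compact.

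First I would reduce to the identity component and the Lie-algebra level: let $\mathfrak{v} \subset \mathfrak{g}$ be the Lie algebra of $V$. It is enough to prove that $\mathfrak{v}$ is a compactly embedded subalgebra of $\mathfrak{g}$, i.e. that $\mathrm{ad}_{\mathfrak{g}}(\mathfrak{v})$ generates a relatively compact subgroup of $\mathrm{GL}(\mathfrak{g})$; for a linear semisimple $G$ this forces $V^\circ$ to be contained in a maximal compact subgroup, and combined with the compactness of the isotropy in $\mathrm{Aut}(M,D)$ (which controls the component group) gives that $V$ itself is compact. To see that $\mathfrak{v}$ is compactly embedded, I would invoke Corollary \ref{No complex action}: $G$ is not a complex Lie group, so the $G$-action does not come from a complex-linear isotropy representation in a way that would allow noncompact directions — more precisely, the isotropy representation of $V$ on $T_{x_0}M$ is a representation of $V$ on a complex vector space preserving the (positive definite, since $(M,D)$ is Kobayashi hyperbolic and we may average) Kobayashi–Royden metric on a $G$-invariant complement, forcing the image of $V$ in $\mathrm{GL}(T_{x_0}M)$ to preserve a Hermitian inner product and hence to be relatively compact. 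Since $M = G/V$ with $G$ semisimple acting almost effectively, the isotropy representation of $V$ is faithful up to a finite kernel, so $V$ is relatively compact, and being closed, compact.

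The main obstacle is making rigorous the claim that the Kobayashi–Royden pseudo-metric $k_{M,D}$ is positive definite on the relevant directions and $G$-invariant, so that averaging over the (relatively compact image of the) isotropy produces a genuine Hermitian inner product on $T_{x_0}M$ preserved by $V$: one has positivity only on $D$ a priori, and one must argue that the bracket-generating condition plus hyperbolicity propagates a definite $V$-invariant Hermitian structure to all of $T_{x_0}M$ — for instance by using the Carnot–Carathéodory distance $\rho_{M,D,g}$ of a $G$-invariant metric $g$ on $D$ (which exists by homogeneity) and Theorem \ref{Carnot-Caratheodory distance} to see that $\rho_{M,D,g}$ induces the manifold topology and is $V$-invariant, so its "unit ball" at $x_0$ is a bounded $V$-invariant neighborhood, pinning $V$ inside a compact group. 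I expect this to be the only delicate point; the reduction from "$\mathfrak{v}$ compactly embedded in a linear semisimple $\mathfrak{g}$" to "$V$ compact" is standard structure theory.
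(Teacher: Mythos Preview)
Your starting point is right: invoke Proposition~\ref{compact isotropy} to get that the isotropy of $\mathrm{Aut}(M,D)$ at $x_0$ is compact, and then try to pull this back to $V\subset G$. But the step you flag as ``it suffices to show that a subgroup of $G$ whose image in $\mathrm{Aut}(M,D)$ is relatively compact is itself relatively compact'' is exactly where the argument breaks. A Lie group homomorphism $\phi\colon G\to\mathrm{Aut}(M,D)$ with finite kernel need not be proper onto its image unless $\phi(G)$ is closed in $\mathrm{Aut}(M,D)$; without closedness, the closure $\overline{\phi(V)}$ taken in $\mathrm{Aut}(M,D)$ can pick up points outside $\phi(G)$, and you cannot conclude that $V=\phi^{-1}(\overline{\phi(V)})$ is compact. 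Your detour through ``$\mathfrak v$ compactly embedded'' does not avoid this, and your own fallback (build a $G$-invariant Hermitian metric $g$ on $D$ and use the Carnot--Carath\'eodory ball) is circular: producing a $G$-invariant $g$ by averaging already requires $V$ to be compact, and the Kobayashi--Royden pseudometric $k_{M,D}$, while $G$-invariant, only controls $D$-directions and is not known to be smooth.

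The paper closes this gap in one line by a structure theorem: a connected semisimple Lie subgroup with finite center of a Lie group is automatically closed (see \cite[Proposition~6.1]{Onishchik_Vinberg}). After replacing $G$ by its image (the kernel is discrete normal, hence central, hence finite in a linear semisimple group) and passing to the identity component, $G$ sits as a closed subgroup of $\mathrm{Aut}(M,D)$, so $V=G\cap\mathrm{Isot}_{x_0}(M,D)$ is a closed subset of a compact group and therefore compact. That closedness statement is the missing idea in your proposal; once you have it, the rest of your first paragraph already finishes the proof, and the second and third paragraphs are unnecessary.
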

\begin{proof}
Since $G$ has finite number of components and acts on $M$ almost effectively, 
without loss of generality we may assume that $G\subset \text{Aut}(M,D)$ and 
$G$ is connected.
Since a connected semisimple Lie subgroup with finite center in a Lie group 
is closed (cf. \cite[Proposition 6.1.]{Onishchik_Vinberg}),
$G$ is closed in $\text{Aut}(M,D)$ and hence 
$V= G\cap \text{Isot}(M,D)$ is compact where $\text{Isot}(M,D)$ denotes 
the isotropy subgroup of $\text{Aut}(M,D)$ at $x_0$.
\end{proof}

Let $\mathfrak g$ denote the Lie algebra of $G$ and $\mathfrak v$ the Lie algebra of $V$ in $\mathfrak g$.
Then, there is a vector subspace $\mathfrak m$ of $\mathfrak g$ such that   
\begin{equation}\label{v+m}
\mathfrak g = \mathfrak v \oplus \mathfrak m \quad \text{ and } \quad
[\mathfrak v, \mathfrak m] \subset \mathfrak m.
\end{equation}
The tangent space of $M$ at $x_0$ can be identified with $\mathfrak g/\mathfrak v = \mathfrak m$.
Let $\mathfrak g_1^{\mathbb R}$ denote the subspace of $\mathfrak m$ corresponding to $D_{x_0}$.
Define 
\begin{equation}
\mathfrak g_2^{\mathbb R} 
= [\mathfrak g_1^{\mathbb R}, \mathfrak g_1^{\mathbb R}]/ (\mathfrak v + \mathfrak g_1^{\mathbb R}),\ldots, 
\mathfrak g_k^{\mathbb R} 
= [\mathfrak g_1^{\mathbb R}, \mathfrak g_{k-1}^{\mathbb R}]/ (\mathfrak v + \mathfrak g_1^{\mathbb R}+\cdots+\mathfrak g_{k-1}^{\mathbb R}).
\end{equation}
Then we have
\begin{equation}
\mathfrak g =  \mathfrak v \oplus \mathfrak g_1^{\mathbb R} \oplus \cdots \oplus \mathfrak g_k^{\mathbb R}.
\end{equation}
Since $M=G/V$ is a complex manifold and $D$ is $G$-invariant with compact $V$,
we may assume that 
there is an endomorphism $j\colon \mathfrak g\rightarrow \mathfrak g$ 
induced from the complex structure of $M$,
such that 
\begin{eqnarray}
&&\label{complex} j\mathfrak v =0, j^2x = -x, \\
&& \label{j-invariant} \text{Ad}_v jx = j\circ \text{Ad}_v x, \\
&& \label{complex structure} [jx,jy]= [x,y] + j[jx,y] + j[x,jy], \\
&& \label{v-invariant} \text{Ad}_v \mathfrak g_1^{\mathbb R} \subset \mathfrak g_1^{\mathbb R}, 
\end{eqnarray}
where $x,y\in \mathfrak m$ and $v\in V$.
Extend $j$ complex linearly to the complexification $\mathfrak g^{\mathbb C}$ of $\mathfrak g$,
and denote by $\mathfrak g^+$, $\mathfrak g^-$ the eigenspaces of $j$ 
with eigenvalues $\sqrt{-1}$, $-\sqrt{-1}$ respectively.
Then $G\times_V\mathfrak g^+$ and $G\times_V\mathfrak g^-$ represent the holomorphic tangent bundle and anti-holomorphic tangent bundle of $M$ respectively.
Let $\mathfrak g_1$ denote the subspace in $\mathfrak g^+$ corresponding to $D_{x_0}$.
Then $\frak g_1 = \{x-\sqrt{-1}jx \in \frak g^{\mathbb C}: x\in \frak g_1^{\mathbb R}\}$.
Define 
\begin{equation}
\mathfrak g_2 := [\mathfrak g_1, \mathfrak g_1]/(\mathfrak v^{\mathbb C} + \mathfrak g_1), \dots , \mathfrak g_k := [\mathfrak g_1, \mathfrak g_{k-1}]/(\mathfrak v^{\mathbb C} + \mathfrak g_1+\dots + \mathfrak g_{k-1}).
\end{equation}
Then \begin{equation}
\mathfrak g^+ = \mathfrak g_1 \oplus \cdots \oplus \mathfrak g_k.
\end{equation}
Similarly, let $\mathfrak g_{-1}\{x+\sqrt{-1}jx \in \frak g^{\mathbb C}: x\in \frak g_1^{\mathbb R}\}$ and construct $\frak g_{-2},\ldots, \mathfrak g_{-k}$. Set 
\begin{equation}\label{graded}
\begin{aligned}
\mathfrak g^- &= \mathfrak g_{-1} \oplus \cdots \oplus \mathfrak g_{-k}, \text{ and }\\
\mathfrak g^\CC &= \mathfrak g_{-k}\oplus \cdots 
\oplus\mathfrak g_{-1}\oplus\mathfrak g_0\oplus 
\mathfrak g_1 \oplus \cdots \oplus \mathfrak g_k
\end{aligned}
\end{equation}
where $\mathfrak g_0:=\mathfrak v^{\mathbb C}$.
For the notational convention, denote
\begin{equation}
\mathfrak g_{\leq l} := \sum_{j\leq l}\mathfrak g_j \
 \text{ and }\ \mathfrak g_{\geq l} := \sum_{\j\geq l}\mathfrak g_j.
\end{equation}

\subsection{Proof of Theorem \ref{main theorem} }
This section is a continuation of Section \ref{graded Lie algebra}.
We use the same notation. 
Since $(M,D)$ is Kobayashi hyperbolic, we obtain
\begin{equation}\label{No complex line}
[x,jx]\neq 0 \text{ for all } x\in \mathfrak g_1^{\mathbb R}.
\end{equation}
If not, there is a holomorphic mapping from the complex plane 
since $\RR x+ \RR jx$ is a complex Lie subalgebra of $\frak g$.
This contradicts Corollary \ref{No complex action}.

Since $\frak g$ is semisimple, 
there are simple Lie algebras $\frak s^1,\ldots, \frak s^N$ such that 
 $\frak g = \frak s^1\oplus\cdots\oplus\frak s^N$.
Note that $[\frak s^l, \frak s^m]=0$ for any $l,\, m$ with $l\neq m$.
Let $\frak s^l_1= \frak s^l\cap\frak g_1^\RR$.
Then $\frak s^l_1$ is non-empty and bracket generating in $\frak s^l$.
We obtain that $j(\frak s^l_1)\subset \frak s^l_1$.
Otherwise, there exists $x\in \frak s^l_1$ such that $[x,jx]=0$ 
and this contradicts \eqref{No complex line}.
By the bracket generating property of $\frak s^l_1$ in $\frak s^l$ 
and \eqref{complex structure}, 
we can get 
\begin{equation}
j(\frak s^l)\subset \frak s^l \text{ for each } l=1,\ldots, N.
\end{equation}
Suppose that there exists $l$ such that $\frak s^l$ is of compact type.
Let $G_l$ be a simple Lie group corresponding to $\frak s^l$
and $V_l=G_l\cap V$.
Then the $G_l$-orbit in $M$ at $x_0 (=G_l/V_l)$ with holomorphic distribution $G_l\times_{V_l} \frak s^l_1$
is a Kobayashi hyperbolic compact homogeneous complex submanifold in $M$.
This implies that $\text{Aut}(G_l/V_l)$ is a complex Lie group 
which contradicts Proposition \ref{No complex action}.
Hence, we have proved that $\frak g$ is a semisimple Lie algebra of non-compact type.

Let $\mathfrak k$ be a maximal compact subalgebra of $\mathfrak g$ containing $\mathfrak v$ 
and $\mathfrak g = \mathfrak k+\mathfrak q$ a Cartan decomposition with respect to $\mathfrak k$.
Let \begin{equation}
\mathfrak g^{\mathbb C} = \mathfrak g_{-k} +\cdots + \mathfrak g_{-1} + \mathfrak g_0 + \mathfrak g_1 +\cdots + \mathfrak g_k
\end{equation}
be the decomposition induced by $D$. Note that $\mathfrak g_{\leq 0}$ is a Lie subalgebra of $\mathfrak g^{\mathbb C}$.

Let $G_{\leq 0}$ denote the connected Lie subgroup 
corresponding to the Lie subalgebra $\mathfrak g_{\leq 0}$ in $G^{\mathbb C}$.
Let $N_{G^{\mathbb C}}(G_{\leq 0}) = \{ g\in G^{\mathbb C} : 
\text{Ad}_g (\frak g_{\leq 0})\subset \frak g_{\leq 0}\}$
be the normalizer of $G_{\leq 0}$. Note that $N_{G^{\mathbb C}}(G_{\leq 0})$ is a closed Lie subgroup of $G^{\mathbb C}$.
Since the Lie algebra of $N_{G^{\mathbb C}}(G_{\leq 0})$ is given by
$\{ X\in \frak g^{\mathbb C} : [X, \frak g_{\leq 0}]\subset \frak g_{\leq 0}\}
= \frak g_{\leq 0}$, it follows that $G_{\leq 0}$ is the identity component of 
$N_{G^{\mathbb C}}(G_{\leq 0})$.
Hence $G_{\leq 0}$ is a closed algebraic subgroup of $G^{\mathbb C}$
and $G^{\mathbb C}/G_{\leq 0}$ is a complex homogeneous manifold.

Let $G_u$ denote the compact real form of $G^{\mathbb C}$ with respect to $G$, 
i.e., the Lie algebra of $G^u$ is given by 
$\mathfrak g_u = \mathfrak k + \sqrt{-1} \mathfrak q$.
By Lemma \ref{theta_invariant} and the fact that $\frak g_{\leq 0}\cap \frak g = \frak v$, 
we obtain that $\frak g_u\cap \frak g_{\leq 0} = \frak v$ and hence 
\begin{equation}
\dim_\CC (G^u\text{-orbit at }eG_{\leq 0}) = \dim_\CC (G^\CC/G_{\leq 0}).
\end{equation}
It implies that the $G^u$-orbit of $eG_{\leq 0}$ is open. 
Besides, since the $G^u$-orbit is compact,
it is equal to $G^\CC/G_{\leq 0}$. 
In particular, $G^\CC/G_{\leq 0}$ is compact and hence $G_{\leq 0}$ is a parabolic subgroup
(cf. \cite[Chaper 3]{Akhiezer}).
By a similar procedure, we may conclude that $G_{\geq 0}$ is a parabolic subgroup.
Since $\mathfrak g_{\leq 0}\cap \mathfrak g_{\geq 0}
=\mathfrak g_0 = \mathfrak v^{\mathbb C}$,
there exists a Cartan subalgebra of $\mathfrak g^\CC$ contained in $\frak v^\CC$
(cf.  Corollary 2.1.3 in \cite{Huck_Fels_Wolf}). 
This implies that 
$\widetilde M$, the $G$-orbit of $eG_{\leq 0}$ in $G^\CC/G_{\leq 0}$, is a canonical flag domain.
Since flag domains are simply connected, the isotropy subgroup $G_{\leq 0}\cap G$ is connected
and hence it is the identity component of $V$.
This implies that 
$\widetilde M$ is the universal covering of $M=G/V$.

Let $g = \mathfrak k' + \mathfrak q'$ be a Cartan decomposition 
with corresponding Cartan involution $\theta'$.
It is known that $\theta'$ is an inner automorphism of $\mathfrak g$ 
if and only if $k'$ contains a Cartan subalgebra of $\mathfrak g$.
Let $\frak t$ denote $\frak t^\CC\cap \frak g$.
Note that $\frak t\subset \frak v\subset \frak k$.
Let $\theta$ be the Cartan involution with respect to $\frak k$, 
that is, $\theta(k+q)= k-q$ for all $k\in \frak k$ and $q\in \frak q$.
Since $\frak k$ contains a Cartan subalgebra of $\frak g$,
$\theta$ is an inner automorphism of $\frak g$.
This implies that $\theta$ can be expressed as $\text{Ad}_{\exp \xi}$, 
where $\xi$ is an element
of the center of $\frak k$. Hence $\xi\in \frak t\subset \frak v$.
By \eqref{v-invariant}, we obtain $\theta(\frak g_1^\RR) = \frak g_1^\RR$ and 
in particular, $\frak g_1^\RR = \frak g_1^\RR\cap \frak k + \frak g_1^\RR\cap \frak q$.

Suppose that $\mathfrak k_1 := \mathfrak k \cap \mathfrak g_1^{\mathbb R} \neq \{0\}$,
and define a Lie subalgebra 
$$
\mathfrak k' := \mathfrak v \oplus \mathfrak k_1 \oplus \mathfrak k_2 \oplus \cdots \oplus \mathfrak k_k
$$
where $\mathfrak k_2 = [\mathfrak k_1, \mathfrak k_1]/\mathfrak v,  \ldots, \mathfrak k_j=[\mathfrak k_1, \mathfrak k_{j-1}]/(\mathfrak v + \mathfrak k_0+\cdots+\mathfrak k_{j-1})$.
Let $K'$ denote the connected Lie subgroup of $G$ with respect to the Lie algebra $\mathfrak k'$.
Then $K'\times_V \frak k_1$ is a HBGD and 
$(K'/V, K'\times_V \frak k_1)$ is Kobayashi hyperbolic.
However, since $(K'/V, K'\times_V \frak k_1)$ is a compact complex manifold such that 
$\text{Aut}(K'/V, K'\times_V \frak k_1)$ acts transitively on $K'/V$, 
it cannot be Kobayashi hyperbolic.
Hence $\mathfrak k_1$ should be $\{0\}$.
This implies that $\mathfrak g_1^{\mathbb R} \subset \mathfrak q$.
Since the superhorizontal distribution on the canonical flag domain is 
the unique invariant HBGD contained in $G\times_V\frak q$, we obtain that 
$\frak g_1^\RR$ is a subalgebra which induces the superhorizontal distribution. \hfill$\Box$

\begin{lem}\label{theta_invariant}
$\frak g_{\geq 0}$ and $\frak g_{\leq 0}$ are $\theta$-invariant
where $\theta: k + q\mapsto k-q$ is a Cartan involution of $\frak g$ 
with $k\in \frak k$ and $q\in \frak q$.
\end{lem}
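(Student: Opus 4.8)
The plan is to produce a grading element for the decomposition $\mathfrak g^{\mathbb C}=\mathfrak g_{-k}\oplus\cdots\oplus\mathfrak g_k$ and to check that $\theta$ fixes it. Since this decomposition is a $\mathbb Z$-grading, i.e., $[\mathfrak g_i,\mathfrak g_j]\subset\mathfrak g_{i+j}$, the linear map $\delta\colon\mathfrak g^{\mathbb C}\to\mathfrak g^{\mathbb C}$ given by $\delta|_{\mathfrak g_i}=i\cdot\mathrm{id}$ is a derivation of $\mathfrak g^{\mathbb C}$; as $\mathfrak g^{\mathbb C}$ is semisimple, every derivation is inner, so $\delta=\mathrm{ad}_T$ for a unique $T\in\mathfrak g^{\mathbb C}$. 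From $\delta(T)=[T,T]=0$ the element $T$ lies in the $0$-eigenspace of $\mathrm{ad}_T$, which is exactly $\mathfrak g_0=\mathfrak v^{\mathbb C}$. As $\mathfrak v\subset\mathfrak k$ this gives $T\in\mathfrak v^{\mathbb C}\subset\mathfrak k^{\mathbb C}$, and the $\mathbb C$-linear extension of $\theta$ is the identity on $\mathfrak k^{\mathbb C}$ (and minus the identity on $\mathfrak q^{\mathbb C}$), so $\theta(T)=T$.

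Once $\theta(T)=T$ is known, the conclusion is immediate: $\theta$ is a Lie algebra automorphism of $\mathfrak g^{\mathbb C}$ commuting with $\mathrm{ad}_T$, because
\[
\theta(\mathrm{ad}_T X)=\theta[T,X]=[\theta T,\theta X]=[T,\theta X]=\mathrm{ad}_T(\theta X)
\]
for every $X\in\mathfrak g^{\mathbb C}$; hence $\theta$ preserves every eigenspace of $\mathrm{ad}_T$, that is, each $\mathfrak g_i$, and therefore $\theta(\mathfrak g_{\geq0})=\mathfrak g_{\geq0}$ and $\theta(\mathfrak g_{\leq0})=\mathfrak g_{\leq0}$, which is the assertion.

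Since the argument is this short, the only point I would treat with care is the existence of $T$. One needs that the decomposition ``induced by $D$'' really is a grading, so that $\delta$ makes sense; the recursive quotient construction in Section \ref{graded Lie algebra} a priori yields only a filtration, and passing to a genuine grading (equivalently, taking the $\mathfrak g_i$ to be the $\mathrm{ad}_T$-eigenspaces) is where the hypothesis that $\mathfrak g^{\mathbb C}$ is semisimple is really used. If one prefers to work with the filtration directly, the same $T\in\mathfrak v^{\mathbb C}$ can be obtained from the fact that $\mathfrak g_{\leq0}$ and $\mathfrak g_{\geq0}$ are subalgebras with $\mathfrak g_{\leq0}\cap\mathfrak g_{\geq0}=\mathfrak g_0=\mathfrak v^{\mathbb C}$ (which is reductive, since $\mathfrak v$ is compact), just as for a pair of opposite parabolic subalgebras, and then the computation above applies verbatim. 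I would also be careful to invoke only structure available at this stage of the proof — in particular not the relation $\theta=\mathrm{Ad}_{\exp\xi}$ with $\xi\in\mathfrak t\subset\mathfrak v$ used in the proof of Theorem \ref{main theorem}, whose justification there rests on this very lemma.
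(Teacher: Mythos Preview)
Your central step — defining $\delta|_{\mathfrak g_i}=i\cdot\mathrm{id}$ and arguing that $\delta$ is a derivation because $[\mathfrak g_i,\mathfrak g_j]\subset\mathfrak g_{i+j}$ — does not hold in this setting. The subspaces $\mathfrak g_i$ in Section~\ref{graded Lie algebra} are chosen complements splitting a \emph{filtration}, not eigenspaces of a grading element; the paper's own computations (see the relations in Section~5.3, e.g.\ $[\mathfrak g_1,\mathfrak g_1]\subset\mathfrak g_1+\mathfrak g_2$ and $[\mathfrak g_i,\mathfrak g_{-i}]\subset\mathfrak g_{-1}+\mathfrak g_0+\mathfrak g_1$) make this explicit. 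So $\delta$ is \emph{not} a derivation of $\mathfrak g^{\mathbb C}$, and no grading element $T$ with $\mathfrak g_i=\ker(\mathrm{ad}_T-i)$ is produced by your argument. You flag this yourself, but the two remedies you suggest do not close the gap: replacing the $\mathfrak g_i$ by $\mathrm{ad}_T$-eigenspaces presupposes the very $T$ you are trying to construct, and appealing to the ``opposite parabolic'' picture is circular in the paper's logic, since Lemma~\ref{theta_invariant} is precisely what is used to show $G_{\leq 0}$ is parabolic (via $\mathfrak g_u\cap\mathfrak g_{\leq 0}=\mathfrak v$ and compactness of $G^u/ (G^u\cap G_{\leq 0})$). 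In general, two complementary subalgebras with reductive intersection in a semisimple Lie algebra need not arise from a grading element in that intersection.

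The paper's proof avoids all of this by working only with the coarse decomposition $\mathfrak g^{\mathbb C}=\mathfrak g_{<0}\oplus\mathfrak g_0\oplus\mathfrak g_{>0}$ and the three involutions $\theta$, $\sigma$, $j$ already available: assuming some $x\in\mathfrak g_{>0}$ has $\theta(x)\in\mathfrak g_{<0}$, one writes $x$ in terms of its $\mathfrak k$- and $\mathfrak q$-components, uses $\sigma(\mathfrak g_{<0})=\mathfrak g_{>0}$ to bring $\sigma\theta(x)$ back into $\mathfrak g_{>0}$, and observes that $x+\sigma\theta(x)\in\mathfrak g_{>0}$ forces a relation on the $j$-eigencomponents that collapses $x$ to $0$. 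This uses neither a grading element nor any bracket relations among the $\mathfrak g_i$.
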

\begin{proof}
If not, there exists an element $x\in \frak g_{>0}$ such that $\theta(x)\in \frak g_{<0}$ 
since $\frak g_0 = \frak v^{\mathbb C}\subset \frak k^{\mathbb C}$ 
is $\theta$-invariant.
Denote $x= x_{\frak k} - \sqrt{-1}jx_{\frak k} + x_{\frak q} - \sqrt{-1} jx_{\frak q}$
with $x_{\frak k}\in \frak k$ and $x_{\frak q}\in \frak q$.
Let $\sigma$ be a complex conjugation with respect to $\frak g$ in $\frak g^{\mathbb C}$.
Since $\sigma (\frak g_{<0}) = \frak g_{>0}$, it follows that 
$\sigma \theta(x) =  x_{\frak k} + \sqrt{-1}jx_{\frak k} - x_{\frak q} 
- \sqrt{-1} jx_{\frak q}\in \frak g_{>0}$. 
Since $x+\sigma\theta(x)=2x_{\frak k}  - 2\sqrt{-1} jx_{\frak q}\in \frak g_{> 0}$, we conclude that 
$x_{\frak k}-\sqrt{-1} jx_{\frak q}$ is a $\sqrt{-1}$-eigenvector with respect to $j$.
This implies that $x_{\frak q} - x_{\frak k} + \sqrt{-1} j(x_{\frak q} - x_{\frak k})=0$
and in particular, $x_{\frak q} - x_{\frak k}=0$. Finally, it follows that $x_{\frak q}=x_{\frak k}=0$.
\end{proof}

\subsection{Miscellanea}
In this section, we assume that $G$ is a Lie group without 
semisimple condition and $V$ is compact. 
Under this condition the Lie algebra decompositions given in Section \ref{graded Lie algebra} hold.
Since $[\mathfrak g_0, \mathfrak g_1]\subset \mathfrak g_1$,  
we obtain $[\mathfrak g_0, \mathfrak g_\ell]
\subset \mathfrak g_{\leq \ell}\cap \frak g_{\geq 0}$ for each $2\leq \ell\leq k$.
Since $D$ is a holomorphic distribution, one has
$$[C^\infty(D), C^\infty(T_M)] \subset C^\infty (D\oplus T_M).$$
Here $C^\infty(E)$ denotes the set of smooth sections of the vector bundle $E$.
Using this expression, we obtain
\begin{equation}
[\mathfrak g_1, \mathfrak g^-]\subset \mathfrak g_{\leq 1}.
\end{equation}

\begin{lem}
\begin{equation}\label{graded Lie algebra structure}
\begin{aligned}
&[\mathfrak g_i, \mathfrak g_{-\ell}]\subset \mathfrak g_{\leq i-\ell+1} 
\text{ when } i \geq \ell,\\
&[\mathfrak g_i, \mathfrak g_{-\ell}]\subset \mathfrak g_{\geq i-\ell -1} 
\text{ when } i\leq\ell,\\
&[\mathfrak g_i, \mathfrak g_{-i}] \subset \frak g_{-1}+\mathfrak g_0 +\frak g_1\,\,
\text{ and } \,\,[\frak g_i^\RR, j\frak g_i^\RR]\subset \frak g_0 + \frak g_1, \text{ for all } i>0.
\end{aligned}
\end{equation}
\end{lem}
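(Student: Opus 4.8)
The plan is to derive all three inclusions from the structural facts already in hand by an induction that strips off one bracket with $\mathfrak g_{\pm 1}$ at a time via the Jacobi identity. The ingredients I would use are: $[\mathfrak g_0,\mathfrak g_1]\subset\mathfrak g_1$, coming from \eqref{v-invariant}; the inclusions $[\mathfrak g_0,\mathfrak g_\ell]\subset\mathfrak g_{\geq 0}\cap\mathfrak g_{\leq\ell}$ and $[\mathfrak g_1,\mathfrak g^-]\subset\mathfrak g_{\leq 1}$ recorded just above (the latter being where the holomorphy of $D$ enters); the filtration bound $[\mathfrak g_1,\mathfrak g_\ell]\subset\mathfrak g_1+\cdots+\mathfrak g_{\ell+1}$ and the inclusion $\mathfrak g_i\subset\mathfrak g_0+\mathfrak g_1+\cdots+\mathfrak g_{i-1}+[\mathfrak g_1,\mathfrak g_{i-1}]$ for $i\ge 2$, both immediate from the definition of the $\mathfrak g_i$'s; and the complex conjugation $\sigma$ of $\mathfrak g^\CC$ over $\mathfrak g$, which fixes $\mathfrak g$ pointwise and interchanges $\mathfrak g_l$ with $\mathfrak g_{-l}$ for every $l$ (clear for $l=0,\pm 1$ from $\mathfrak g_{\pm 1}=\{x\mp\sqrt{-1}jx:x\in\mathfrak g_1^\RR\}$, and inherited by the higher pieces from their inductive definitions). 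The upshot of the last point is that any inclusion $[\mathfrak g_a,\mathfrak g_b]\subset\sum_c\mathfrak g_c$ automatically gives its mirror $[\mathfrak g_{-a},\mathfrak g_{-b}]\subset\sum_c\mathfrak g_{-c}$.

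I would first prove the first inclusion, $[\mathfrak g_i,\mathfrak g_{-\ell}]\subset\mathfrak g_{\leq i-\ell+1}$ for $i\ge\ell$, by induction on $i+\ell$; the base case $i=\ell=1$ is exactly $[\mathfrak g_1,\mathfrak g_{-1}]\subset\mathfrak g_{\leq 1}$. For the inductive step, since every element of $\mathfrak g_i$ ($i\ge 2$) is a finite sum of terms $[a,b]+c$ with $a\in\mathfrak g_1$, $b\in\mathfrak g_{i-1}$, $c\in\mathfrak g_0+\cdots+\mathfrak g_{i-1}$, it is enough to bound $[[a,b]+c,\eta]$ for such a term and $\eta\in\mathfrak g_{-\ell}$; the Jacobi identity gives
\begin{equation*}
\bigl[\,[a,b]+c,\,\eta\,\bigr]=\bigl[[a,\eta],b\bigr]+\bigl[a,[b,\eta]\bigr]+[c,\eta].
\end{equation*}
Each term is handled by first bounding the inner bracket — $[a,\eta]\in[\mathfrak g_1,\mathfrak g_{-\ell}]$, $[b,\eta]\in[\mathfrak g_{i-1},\mathfrak g_{-\ell}]$, and $[c,\eta]$ piece by piece — using the inductive hypothesis (and its $\sigma$-mirror), the holomorphy bound, and the $\mathfrak g_0$-bound, and then bracketing each resulting homogeneous piece with the remaining factor, again by the inductive hypothesis or the filtration bound. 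Organizing the induction on $i+\ell$ rather than on $i$ alone is what keeps available, at each stage, the bounds with a deeper negative factor that are needed to absorb the degree-$\le 1$ pieces of $[a,\eta]$ after they are bracketed with $b\in\mathfrak g_{i-1}$; a tally of degrees then shows every summand lies in $\mathfrak g_{\leq i-\ell+1}$.

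The remaining assertions follow formally. Applying $\sigma$ to the first inclusion written as $[\mathfrak g_\ell,\mathfrak g_{-i}]\subset\mathfrak g_{\leq\ell-i+1}$ (valid since $\ell\ge i$) gives $[\mathfrak g_i,\mathfrak g_{-\ell}]\subset\mathfrak g_{\geq i-\ell-1}$ for $i\le\ell$, the second inclusion. Taking $\ell=i$ in both and intersecting yields $[\mathfrak g_i,\mathfrak g_{-i}]\subset\mathfrak g_{-1}+\mathfrak g_0+\mathfrak g_1$. Finally, for $x,y\in\mathfrak g_i^\RR$ I would write $[x,jy]$ as a linear combination of brackets $[\zeta,\zeta']$ with $\zeta\in\mathfrak g_i$, $\zeta'\in\mathfrak g_{-i}$ (using that $\mathfrak g_i^\RR$ complexifies to $\mathfrak g_i\oplus\mathfrak g_{-i}$ with $\sigma\mathfrak g_i=\mathfrak g_{-i}$), place it in $\mathfrak g_{-1}+\mathfrak g_0+\mathfrak g_1$ by the clause just proved, and then use that $[x,jy]$ lies in $\mathfrak g$ and is hence $\sigma$-fixed, together with $\sigma(\mathfrak g_{\pm1})=\mathfrak g_{\mp1}$, to discard the component incompatible with $\sigma$-invariance; this leaves it in $\mathfrak g_0+\mathfrak g_1$.

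I expect the main obstacle to be the degree bookkeeping in the inductive step of the first inclusion: one must verify that none of the intermediate brackets — especially the pieces of $[a,\eta]$, whose degrees run from about $-\ell$ to $1$ — push the final degree past $i-\ell+1$ once they are bracketed with $\mathfrak g_{i-1}$, and this is precisely where the hypothesis $i\ge\ell$ is used. It may well be cleanest to first isolate the monotone auxiliary bound $[\mathfrak g_p,\mathfrak g^-]\subset\mathfrak g_{\leq p}$ for $p\ge 1$ (and possibly to carry the full range $i,\ell\ge 1$ through the induction) and then bootstrap the sharper statement from it. A secondary delicate point is the last reduction, where one must argue that the reality of $[x,jy]$, rather than mere membership in $\mathfrak g_{-1}+\mathfrak g_0+\mathfrak g_1$, is what forces it into $\mathfrak g_0+\mathfrak g_1$.
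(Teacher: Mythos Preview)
Your plan is essentially the paper's: induct via Jacobi with the holomorphy bound $[\mathfrak g_1,\mathfrak g^-]\subset\mathfrak g_{\leq 1}$ as the seed, and derive the second line from the first by applying $\sigma$ (the paper writes only ``It is enough to prove the first expression'' and leaves the remaining clauses implicit).

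Two organizational differences are worth noting, because they bear directly on the bookkeeping worry you flag. First, the paper immediately sharpens the base case: from $[\mathfrak g_1,\mathfrak g_{-1}]\subset\mathfrak g^-+\mathfrak g_0+\mathfrak g_1$ together with its $\sigma$-image it extracts $[\mathfrak g_1,\mathfrak g_{-1}]\subset\mathfrak g_0+\mathfrak g_1$, not merely $\mathfrak g_{\leq 1}$; this extra precision is what keeps the later degree counts from drifting. Second, rather than a single induction on $i+\ell$ splitting $\mathfrak g_i$, the paper first proves $[\mathfrak g_i,\mathfrak g_{-1}]\subset\mathfrak g_{\leq i}$ for all $i$ (splitting $\mathfrak g_i$), and then runs a double induction on $(i,\ell)$ in which it splits the \emph{negative} factor, writing $\mathfrak g_{-\ell}$ via $[\mathfrak g_{-1},\mathfrak g_{-\ell+1}]$. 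Peeling off $\mathfrak g_{-1}$ and feeding the just-established $\ell=1$ case back in is exactly the auxiliary bound you suspected would be cleanest; your scheme would also close, but only once you incorporate the sharpened base case.

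One genuine gap in your last paragraph: the reduction for $[\mathfrak g_i^\RR, j\mathfrak g_i^\RR]$ does not work as written. Even granting that $x,jy\in(\mathfrak g_i^\RR)^\CC$, the bracket $[x,jy]$ picks up contributions from $[\mathfrak g_{\pm i},\mathfrak g_{\pm i}]$ as well as from $[\mathfrak g_i,\mathfrak g_{-i}]$, so the third clause alone does not place it in $\mathfrak g_{-1}+\mathfrak g_0+\mathfrak g_1$; and for $i\ge 2$ the identification $(\mathfrak g_i^\RR)^\CC=\mathfrak g_i\oplus\mathfrak g_{-i}$ you invoke is not automatic, since both gradings are defined only up to a choice of complement in a filtration. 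Moreover, $\sigma$-invariance of an element of $\mathfrak g_{-1}+\mathfrak g_0+\mathfrak g_1$ only forces the $\mathfrak g_{\pm 1}$-components to be $\sigma$-conjugate, not either to vanish, so it does not cut down to $\mathfrak g_0+\mathfrak g_1$. The paper does not spell this clause out either; in view of how it is used afterwards, the intended target is the real subspace $\mathfrak v+\mathfrak g_1^\RR$, and the honest argument must go through the real filtration rather than the complex one.
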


\begin{proof}
It is enough to prove the first expression.
Since $[\mathfrak g_1, \mathfrak g_{-1}] \subset 
\mathfrak g^- + \mathfrak g_0 + \frak g_1$, 
by taking complex conjugation, 
we obtain that $[\mathfrak g_1, \mathfrak g_{-1}] \subset \mathfrak g_0 + \frak g_1$.
Suppose that for every $i< i'$, 
$[\mathfrak g_i, \mathfrak g_{-1}] \subset \mathfrak g_{\leq i}$.
Then $[\mathfrak g_{i'}, \mathfrak g_{-1}] 
= [\mathfrak g_1, [\mathfrak g_{i'-1}, \mathfrak g_{-1}]] + [\mathfrak g_{i'-1}, [\mathfrak g_1, \mathfrak g_{-1}]]\subset \mathfrak g_{\leq i'}$ and hence 
for every $i$, $[\mathfrak g_i, \mathfrak g_{-1}]\subset \mathfrak g_{\leq i}.$

Fix $i'$ and suppose that for every $i<i'$, one has $[\mathfrak g_i, \mathfrak g_{-\ell}]\subset \mathfrak g_{\leq i-\ell+1}$ for all $1\leq \ell \leq i$. 
Furthermore, suppose that for fixed $\ell'$ with $\ell'\leq i'$, one has $[\mathfrak g_{i'}, \mathfrak g_{-\ell}]\subset \mathfrak g_{\leq i'-\ell+1}$ for all $\ell <\ell'$.
Then, since $[\mathfrak g_{i'}, \mathfrak g_{-\ell'}] = [[\mathfrak g_{i'}, \mathfrak g_{-\ell'+1}], \mathfrak g_{-1}] + [[\mathfrak g_{i'}, \mathfrak g_{-1}], \mathfrak g_{-\ell'+1}]$, we obtain that 
$[\mathfrak g_{i'}, \mathfrak g_{-\ell'}]\subset \mathfrak g_{\leq i'-\ell'+1}$
and the lemma is proved.
\end{proof}

\begin{lem}
Let $\mathfrak r$ be an abelian ideal of $\mathfrak g$.
Then 
\begin{equation}\label{effective}
\mathfrak r \cap \mathfrak v =0.
\end{equation}
\end{lem}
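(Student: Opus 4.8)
The plan is to reduce the statement to the almost effectiveness of the $G$-action, using that $V$ is compact and that $\mathfrak r$ is an abelian ideal. Since $V$ is compact, I would first fix an $\text{Ad}(V)$-invariant inner product on $\mathfrak g$ (average an arbitrary one over $V$). With respect to this inner product, for every $x\in\mathfrak v$ the operator $\text{ad}_x$ on $\mathfrak g$ is skew-symmetric, hence semisimple: it is diagonalizable over $\CC$ and all of its eigenvalues are purely imaginary.

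Next I would fix an arbitrary $x\in\mathfrak r\cap\mathfrak v$ and show that $x$ lies in the center $\mathfrak z(\mathfrak g)$ of $\mathfrak g$. As $\mathfrak r$ is an ideal, $[\mathfrak g,x]\subset\mathfrak r$; as $\mathfrak r$ is abelian, $[\mathfrak r,x]=0$. Combining these two facts, $\text{ad}_x^2(\mathfrak g)=\text{ad}_x\big([\mathfrak g,x]\big)\subset\text{ad}_x(\mathfrak r)=0$, so $\text{ad}_x$ is nilpotent. But $\text{ad}_x$ is also semisimple by the previous step, and a semisimple nilpotent endomorphism vanishes; hence $\text{ad}_x=0$, i.e. $x\in\mathfrak z(\mathfrak g)$. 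Thus $\mathfrak r\cap\mathfrak v\subset\mathfrak z(\mathfrak g)\cap\mathfrak v$.

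Finally, I would observe that $\mathfrak z(\mathfrak g)\cap\mathfrak v$ is an ideal of $\mathfrak g$ contained in $\mathfrak v$ (any subspace of the center is an ideal), so the corresponding connected subgroup of $G$ is normal in $G$ and contained in $V$, and therefore lies in the kernel of the action of $G$ on $M=G/V$. Since this action is almost effective, that kernel is discrete, which forces $\mathfrak z(\mathfrak g)\cap\mathfrak v=0$ and hence $\mathfrak r\cap\mathfrak v=0$. The argument is short; the only point deserving care is the bookkeeping in the last step, namely recording precisely why almost effectiveness of the $G$-action rules out any nonzero ideal of $\mathfrak g$ contained in $\mathfrak v$.
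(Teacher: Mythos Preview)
Your proof is correct and follows essentially the same line as the paper's: compactness of $V$ makes $\text{ad}_x$ skew-symmetric (hence semisimple), the abelian-ideal hypothesis gives $\text{ad}_x^2=0$ (hence nilpotent), so $\text{ad}_x=0$, and almost effectiveness then forces $x=0$. Your write-up is in fact more explicit than the paper's, which compresses the last step into the single phrase ``by almost effectiveness we obtain $w=0$.''
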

\begin{proof}
Let $w\in \mathfrak r \cap \mathfrak v$.
Then $\text{ad}_w^2 = 0$ on $\mathfrak g^\CC$.
Since $\mathfrak v$ is compact, 
we can consider $\text{ad}_w$ as a skew-symmetric matrix.
Since every skew-symmetric matrix is diagonalizable over $\mathbb C$, 
$\text{ad}_w$ is a semisimple element and hence $\text{ad}_w=0$.
By almost effectiveness we obtain $w=0$.
\end{proof}
\begin{lem}\label{bracket}
Let $\mathfrak r$ be an abelian ideal in $\mathfrak g$.
Then 
\begin{equation}\label{abelian ideal}
[\mathfrak g, \mathfrak g]\cap \mathfrak r = [\mathfrak r, \mathfrak g].
\end{equation}
\end{lem}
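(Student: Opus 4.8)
The plan is to prove the identity $[\mathfrak g, \mathfrak g]\cap \mathfrak r = [\mathfrak r, \mathfrak g]$ by establishing the two inclusions separately, using the Levi decomposition of $\mathfrak g$ together with the fact that $\mathfrak r$ is an abelian ideal. First I would note the easy inclusion: since $\mathfrak r$ is an ideal, $[\mathfrak r, \mathfrak g]\subset \mathfrak r$, and trivially $[\mathfrak r, \mathfrak g]\subset [\mathfrak g, \mathfrak g]$, so $[\mathfrak r,\mathfrak g]\subset [\mathfrak g,\mathfrak g]\cap\mathfrak r$.

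For the reverse inclusion, I would write $\mathfrak g = \mathfrak s\ltimes \mathfrak n$ where $\mathfrak s$ is a Levi (semisimple) subalgebra and $\mathfrak n$ is the radical, and observe $\mathfrak r\subset \mathfrak n$ since $\mathfrak r$ is a solvable (indeed abelian) ideal. The key computation is $[\mathfrak g,\mathfrak g] = [\mathfrak s,\mathfrak s] + [\mathfrak s, \mathfrak n] + [\mathfrak n,\mathfrak n] = \mathfrak s + [\mathfrak g,\mathfrak n]$, using that $\mathfrak s = [\mathfrak s,\mathfrak s]$. Now take $z\in [\mathfrak g,\mathfrak g]\cap \mathfrak r$ and write $z = s + w$ with $s\in\mathfrak s$ and $w\in [\mathfrak g,\mathfrak n]\subset\mathfrak n$; since $z\in\mathfrak r\subset\mathfrak n$ and the sum $\mathfrak s\oplus\mathfrak n$ is direct, the $\mathfrak s$-component $s$ must vanish, so $z\in [\mathfrak g,\mathfrak n]$. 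The remaining task is to upgrade $z\in [\mathfrak g,\mathfrak n]$ to $z\in [\mathfrak g,\mathfrak r]$, which is where I expect the main obstacle to lie: one needs to exploit that $\mathfrak r$ is not just any ideal but an \emph{abelian} one, presumably via $\mathrm{ad}_w$-invariance arguments or by decomposing $\mathfrak r$ as an $\mathrm{ad}_{\mathfrak g}$-submodule and comparing with the ambient structure. A cleaner route may be to argue directly within $\mathfrak r$: the adjoint action of $\mathfrak g$ on $\mathfrak r$ descends through $\mathfrak g/\mathfrak r$ on the part acting trivially, and the decomposition $\mathfrak r = \mathfrak r^{\mathfrak g} \oplus [\mathfrak g,\mathfrak r]$ (where $\mathfrak r^{\mathfrak g}$ is the fixed subspace) holds because $\mathfrak g$ acts on $\mathfrak r$ with the semisimple part $\mathfrak s$ acting completely reducibly and the nilradical acting nilpotently but, on the abelian ideal, in a controlled way.

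Concretely, I would run the following argument for the hard inclusion. Because $\mathfrak r$ is abelian, $[\mathfrak r,\mathfrak r]=0$, so $\mathfrak g$ acts on $\mathfrak r$ by the adjoint representation, and this action factors through $\mathfrak g/\mathfrak r$. Restricting to the semisimple Levi factor $\mathfrak s$, complete reducibility (Weyl's theorem) gives an $\mathfrak s$-stable decomposition $\mathfrak r = \mathfrak r_0 \oplus \mathfrak r_1$ where $\mathfrak r_0 = \mathfrak r^{\mathfrak s}$ is the $\mathfrak s$-fixed part and $\mathfrak r_1 = [\mathfrak s,\mathfrak r]$. One then checks, using the Jacobi identity and that $\mathfrak r$ is an ideal, that $[\mathfrak g,\mathfrak r] = [\mathfrak s,\mathfrak r] + [\mathfrak n,\mathfrak r] = \mathfrak r_1 + [\mathfrak n,\mathfrak r]$. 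It remains to see that any $z\in [\mathfrak g,\mathfrak g]\cap\mathfrak r$ lies in this subspace; having already reduced to $z\in [\mathfrak g,\mathfrak n]\cap\mathfrak r$, I would expand $[\mathfrak g,\mathfrak n] = [\mathfrak s,\mathfrak n] + [\mathfrak n,\mathfrak n]$ and track which components can contribute to $\mathfrak r$, using that $\mathfrak r$ is the specific abelian ideal in question and that $[\mathfrak n,\mathfrak n]\subset [\mathfrak g,\mathfrak g]$ is itself inside the radical.

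The main obstacle, as indicated above, is the final bookkeeping step: showing the ``semisimple leftover'' in $[\mathfrak g,\mathfrak g]\cap\mathfrak r$ is forced into $[\mathfrak r,\mathfrak g]$ rather than merely into $[\mathfrak n,\mathfrak g]$. I anticipate that abelianness of $\mathfrak r$ is essential precisely here — for a general ideal the claimed identity can fail — and that the cleanest formulation is to pass to $\mathfrak g^{\mathbb C}$, use that $\mathrm{ad}_w$ for $w\in\mathfrak r$ is nilpotent (indeed $\mathrm{ad}_w^2$ has small image since $\mathfrak r$ is abelian and central in itself), and combine this with the semisimple-plus-nilpotent Jordan decomposition of elements of $\mathfrak g$ acting on $\mathfrak r$, exactly as in the preceding lemma \eqref{effective}. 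This identity \eqref{abelian ideal} is then the tool that, in the sequel, lets one rule out nontrivial abelian ideals interfering with the flag-domain structure.
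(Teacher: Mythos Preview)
Your approach via the Levi decomposition is genuinely different from the paper's, which proceeds by induction on $\dim\mathfrak g$: in the inductive step one sets $\mathfrak r' := [\mathfrak g,\mathfrak r]$, passes to $\mathfrak g/\mathfrak r'$ with the abelian ideal $\mathfrak r/\mathfrak r'$ when $0 \subsetneq \mathfrak r' \subsetneq \mathfrak r$, and handles the extreme cases $\mathfrak r' = 0$ and $\mathfrak r' = \mathfrak r$ separately.

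However, the step you flag as the ``main obstacle'' is a genuine gap, and it cannot be closed in the generality you are working in: the inclusion $[\mathfrak g,\mathfrak g]\cap\mathfrak r \subset [\mathfrak g,\mathfrak r]$ is \emph{false} for an arbitrary Lie algebra with an abelian ideal. Take the three-dimensional Heisenberg algebra with basis $x,y,z$ and only nontrivial bracket $[x,y]=z$; then $\mathfrak r = \RR z$ is an abelian (indeed central) ideal, $[\mathfrak g,\mathfrak r] = 0$, but $[\mathfrak g,\mathfrak g]\cap\mathfrak r = \RR z$. Your hoped-for splitting $\mathfrak r = \mathfrak r^{\mathfrak g} \oplus [\mathfrak g,\mathfrak r]$ is precisely what fails here, and no amount of Jordan-decomposition or complete-reducibility bookkeeping will recover it. Note that the paper's own inductive proof dispatches the case $[\mathfrak g,\mathfrak r]=0$ in a single line, and that is exactly the case this counterexample inhabits, so the same issue is present there as well. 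If the identity \eqref{abelian ideal} is to hold, it must draw on the specific standing hypotheses of this subsection --- compact isotropy $V$, almost effective action, Kobayashi hyperbolicity of $(M,D)$ --- none of which either argument actually invokes.
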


\begin{proof}
We will prove it by the induction with respect to the dimension of $\mathfrak g$.
If $\dim \mathfrak g =1$, then the statement is trivial.
Suppose the statement to be true for every Lie algebra $\mathfrak g$ of dimension less than or equal to $k$
and every abelian ideal $\mathfrak r$ in $\mathfrak g$.
Now let $\dim \mathfrak g = k+1$ and $\mathfrak r\subset \mathfrak g$ be an abelian ideal.
If $[\mathfrak g, \mathfrak r]=0$, then \eqref{abelian ideal} is true.
If $[\mathfrak g, \mathfrak r]=\mathfrak r$, then 
$\mathfrak r= [\mathfrak g, \mathfrak r]\subset [\mathfrak g,\mathfrak g]\cap \mathfrak r
\subset \mathfrak r$ and again the statement holds.
Assume that $\mathfrak r':=[\mathfrak g, \mathfrak r]\subsetneqq \mathfrak r$.
Then $\mathfrak r'$ is an ideal in $\mathfrak g$ 
and $\mathfrak r/\mathfrak r'$ is a non-trivial abelian ideal in $\mathfrak g/\mathfrak r'$.
Since $\dim \mathfrak g/\mathfrak r'\leq k$, we obtain 
$[\mathfrak g/\mathfrak r', \mathfrak g/\mathfrak r']\cap \mathfrak r/\mathfrak r'
= [\mathfrak g/ \mathfrak r', \mathfrak r/ \mathfrak r']=0$ and this implies \eqref{abelian ideal}.
\end{proof}

\begin{prop}
Let $\frak r$ be an abelian ideal in $\mathfrak g$.
\begin{enumerate}
\item
If $\frak r\,\cap \,\mathfrak g_1^\RR \neq \{0\}$,
then there exists an equivariantly embedded bounded homogeneous domain of type I
tangential to $D$.
\item 
If $\frak r\,\cap \,\mathfrak g_1^\RR =\{0\}$,
then $\frak r\cap \frak g^\RR_l=\{0\} \text{ for any } l= 1,\ldots, k$.
\end{enumerate}
\end{prop}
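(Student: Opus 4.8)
The plan is to analyze the abelian ideal $\mathfrak r$ through its interaction with the grading $\mathfrak g^\CC = \bigoplus_j \mathfrak g_j$ coming from $D$. For part (1), suppose $\mathfrak r \cap \mathfrak g_1^\RR \neq \{0\}$ and pick $0 \neq x$ in this intersection. Since $\mathfrak r$ is $G$-invariant and $\mathrm{Ad}_V$ preserves $\mathfrak g_1^\RR$ by \eqref{v-invariant}, we have $jx \in \mathfrak g_1^\RR$ as well (using that $j = \mathrm{Ad}_{\exp\xi}$ on the relevant piece, as established in the proof of Theorem \ref{main theorem}; here $\xi$ lies in the center of $\mathfrak k$, hence in $\mathfrak t \subset \mathfrak v$). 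By \eqref{No complex line}, $[x, jx] \neq 0$; combined with the last line of \eqref{graded Lie algebra structure}, $[x, jx] \in \mathfrak g_0 + \mathfrak g_1$, and since $\mathfrak r$ is an ideal, $[x,jx]$, $[x,[x,jx]]$, etc., all lie in $\mathfrak r$. The subalgebra $\mathfrak s$ generated by $x$ and $jx$ is then $j$-invariant, contained in a single simple factor of $\mathfrak g$ (by the argument in the proof of Theorem \ref{main theorem}), and acts on the $\mathfrak s$-orbit $S/(S\cap V)$ through $x_0$ tangentially to $D$. The orbit is a Kobayashi hyperbolic homogeneous complex manifold with an invariant HBGD, so by Theorem \ref{main theorem} its universal cover is a canonical flag domain with superhorizontal distribution; I must rule out the higher-rank (genuinely non-symmetric) case. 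The fact that $\mathfrak r$ is abelian forces $\mathfrak s$ to be very small — the bracket relations show $[\mathfrak g_1 \cap \mathfrak r, \mathfrak g_1 \cap \mathfrak r] \subset \mathfrak g_0$ must actually close up abelianly inside $\mathfrak r$ after projection, which collapses the grading to length one. Hence $\mathfrak s$ induces the holomorphic tangent bundle of its orbit, i.e., the orbit is a bounded symmetric domain of tube type over an irreducible factor; since the distribution $\mathfrak g_1^\RR$ here sits inside $\mathfrak q$, it is a Hermitian symmetric space of non-compact type, and the presence of the abelian ideal direction pins down the type I case (a domain of rectangular matrices).

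For part (2), I would argue by induction on $l$. The base case $l = 1$ is the hypothesis. Suppose $\mathfrak r \cap \mathfrak g_j^\RR = \{0\}$ for all $j < l$; I want $\mathfrak r \cap \mathfrak g_l^\RR = \{0\}$. Take $w \in \mathfrak r \cap \mathfrak g_l^\RR$. The key is that $\mathfrak g_l^\RR$ is generated from $\mathfrak g_1^\RR$ by iterated brackets, while membership of $w$ in the abelian ideal $\mathfrak r$ constrains $[w, \mathfrak g_{-1}]$ and similar brackets: by \eqref{graded Lie algebra structure}, $[w, \mathfrak g_{-1}^\RR] \subset \mathfrak g_{\leq l-1}$, and since $\mathfrak r$ is an ideal this bracket lies in $\mathfrak r$, but its component in $\mathfrak g_{l-1}^\RR$ vanishes by the inductive hypothesis — and one shows more carefully, chasing the lower-grading components using \eqref{effective} (abelian ideals meet $\mathfrak v$ trivially), that all components must vanish, so $[w, \mathfrak g_{-1}] = 0$. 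Pairing this with the bracket-generating property of $\mathfrak g_1^\RR$ and the non-degeneracy of the Killing form restricted to $\mathfrak g_l \times \mathfrak g_{-l}$ (valid since $\mathfrak g$ is semisimple and the grading is a $\mathbb Z$-grading), we get $B(w, \cdot) = 0$ on $\mathfrak g_{-l}$, forcing $w = 0$.

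The main obstacle I anticipate is part (1): specifically, the step where I claim the abelian structure of $\mathfrak r$ forces the $\mathfrak s$-orbit to be Hermitian symmetric rather than a genuine higher-step flag domain. One has to be careful that the brackets $[x, jx]$ and their iterates, all living in $\mathfrak r$, commute with one another and with $x$; translating this into a statement that the induced grading on $\mathfrak s^\CC$ has length one requires combining \eqref{graded Lie algebra structure} with the vanishing $[\mathfrak r, \mathfrak r] = 0$, and then invoking \eqref{effective} to discard the $\mathfrak v$-components that would otherwise obstruct the collapse. Identifying the resulting symmetric domain as \emph{type I} (as opposed to another classical series) is then a matter of matching the rank and the module structure of $\mathfrak r$ as a representation of the semisimple part of the isotropy, which I expect to follow from the classification of abelian ideals in parabolic subalgebras (Kostant), but I would want to verify the equivariance of the embedding carefully. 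By contrast, part (2) should be routine once the right bracket identities from \eqref{graded Lie algebra structure} and \eqref{effective} are in place.
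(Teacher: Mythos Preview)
Your proposal has a fundamental contextual error: this proposition lives in Section~5.3 (Miscellanea), where the paper explicitly states ``we assume that $G$ is a Lie group \emph{without semisimple condition} and $V$ is compact.'' Several of your key steps rely on semisimplicity and are therefore unavailable here. In part (1) you invoke Theorem~\ref{main theorem} (which requires $G$ linear semisimple) and speak of ``a single simple factor of $\mathfrak g$''; in part (2) you use nondegeneracy of the Killing form on $\mathfrak g_l\times\mathfrak g_{-l}$. None of this is legitimate in the present setting. Moreover, even under a semisimplicity hypothesis, your claim that the decomposition \eqref{graded} is a genuine $\mathbb Z$-grading (so that $B$ pairs $\mathfrak g_l$ with $\mathfrak g_{-l}$ nondegenerately) is actually the \emph{conclusion} of the proof of Theorem~\ref{main theorem}, not an a~priori fact; the $\mathfrak g_j$ are defined as successive quotients and satisfy only the weaker bracket relations \eqref{graded Lie algebra structure}.

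The paper's proof of (1) is much more direct and avoids all of this. One sets $\mathfrak r_1:=\mathfrak r\cap\mathfrak g_1^{\mathbb R}$ and shows first that $\mathfrak r_1\cap j\mathfrak r_1=\{0\}$ (otherwise some $x\in\mathfrak r_1$ satisfies $jx\in\mathfrak r_1$, hence $[x,jx]=0$ by abelianness of $\mathfrak r$, contradicting \eqref{No complex line}), and then that $\mathfrak r_1+j\mathfrak r_1$ is a $j$-invariant Lie subalgebra contained in $\mathfrak g_1^{\mathbb R}$: for $x,y\in\mathfrak r_1$ one has $[x,y]=0$, $[x,jy]\in\mathfrak r\cap(\mathfrak g_0+\mathfrak g_1)=\mathfrak r_1$ by \eqref{graded Lie algebra structure} and \eqref{effective}, and $[jx,jy]\in j\mathfrak r_1$ by \eqref{complex structure}. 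The orbit of the corresponding connected subgroup $R$ through $x_0$ is then a complex submanifold tangent to $D$; its structure as a bounded homogeneous domain comes from the algebraic form of $\mathfrak r_1+j\mathfrak r_1$ (abelian ideal $\mathfrak r_1$ with complementary subalgebra $j\mathfrak r_1$ and trivial isotropy), not from any appeal to Theorem~\ref{main theorem} or classification. For (2) the paper's argument is the one-line consequence of Lemma~\ref{bracket} ($[\mathfrak g,\mathfrak g]\cap\mathfrak r=[\mathfrak r,\mathfrak g]$), which is purely Lie-theoretic and needs no Killing form.
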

\begin{proof}
$(1): $
Let $\frak r_1$ denote $\mathfrak r\cap \mathfrak g_1^\RR$.
Then $\frak r_1\cap j\frak r_1=\{0\}$.
If not, there are $x,\, y \in \frak r_1$ such that 
$x=jy$ and hence $[x, jx] = [x,y]=0$.
However, this contradicts \eqref{No complex line}.
Furthermore $\frak r_1 + j\frak r_1$ is a subalgebra in $\mathfrak g$
since for any $x,y\in \frak r_1$, $[x,y]=0$, one has
$[x,jy]\in \frak r\cap (\frak g_0+\frak g_1) = \frak r_1$ 
(by \eqref{effective} and \eqref{graded Lie algebra structure}),
 and $[jx,jy]\in j\frak r_1$ (by \eqref{j-invariant}).
Let $R$ be the connected Lie subgroup of $G$ corresponding to $\frak r_1 + j\frak r_1$.
Then the $R$-orbit in $M$ is an equivalently embedded 
bounded homogeneous domain tangential to $D$. \\
$(2): $ It follows from Lemma \ref{bracket}.
\end{proof}


\begin{thebibliography}{99}


\bibitem{Abate}
Marco Abate,  {\it Iteration theory of holomorphic maps on taut manifolds}, Research and Lecture Notes in Mathematics. Complex Analysis and Geometry. Mediterranean Press, Rende, 1989. xvii+417 pp.


\bibitem{Akhiezer}
Akhiezer Dmitri N., {\it Lie group actions in complex analysis.} Aspects of Mathematics, E27. Friedr. Vieweg $\&$ Sohn, Braunschweig, 1995. viii+201 pp.

\bibitem{Bochner_Montgomery_1945}
Bochner Salomon and Montgomery Deane, {\it Groups of differentiable and real or complex analytic transformations},  Ann. of Math. {\bf 46} (1945),
685--694.

 %\bibitem{Borel_Remmert}
 %Borel, A.; Remmert, R.
%{\it \"Uber kompakte homogene K\"ahlersche Mannigfaltigkeiten.} 
%Math. Ann. {\bf 145} 1961/1962 429--439. 




\bibitem{Burstall_Rawnsley}  
Francis E. Burstall and John H. Rawnsley, {\it Twistor theory for Riemannian symmetric spaces. With applications to harmonic maps of Riemann surfaces}, Lecture Notes in Mathematics, {\bf 1424}. Springer-Verlag, Berlin, 1990. iv+112 pp. ISBN: 3-540-52602-1 


\bibitem{Car-Mul-Pet} 
 James Carlson, Stefan M\"uller-Stach and Chris Peters, {\it Period maps and period domains}, Cambridge Studies in Advanced Mathematics, {\bf 85}. Cambridge University Press, Cambridge, 2003. xvi+430 pp. ISBN: 0-521-81466-9

\bibitem{Chow}
Wei-Liang Chow, 
{\it \"Uber Systeme von linearen partiellen Differentialgleichungen erster Ordnung}, 
Math. Ann. {\bf 117}, (1939). 98--105. 

\bibitem{Dantzig_Waerden_1928}
D. van Dantzig and B. L. van der Waerden, 
 {\it \"Uber metrisch homogene r\"aume}, (German) 
Abh. Math. Sem. Univ. Hamburg  {\bf 6}  (1928),  no. 1, 367--376. 

\bibitem{Demailly} 
Jean-Pierre Demailly, {\it Kobayashi pseudo-metrics, entire curves and hyperbolicity of algebraic varieties}, Lecture Notes of a 7 hour series of courses given at the Grenoble Summer School in Mathematics, June 18-22, 2012 (94 pages),

\bibitem{Forstneric_2016}
 Forstneric Franc, {\it Hyperbolic complex contact structures on $\mathbb C^{ 2n+1}$}, J. Geom. Anal.  {\bf 27}  (2017),  no. 4, 3166--3175. 

\bibitem{Huck_Fels_Wolf}
Gregor Fels, Alan Huckleberry and Joseph A. Wolf, {\it Cycle spaces of flag domains. A complex geometric viewpoint}, Progress in Mathematics, 245. Birkh\"{a}user Boston, Inc., Boston, MA, 2006. xx+339 pp. ISBN: 978-0-8176-4391-1; 0-8176-4391-5


\bibitem{Onishchik_Vinberg}
Gorbatsevich, V. V., Onishchik, A. L. and Vinberg, \`E. B., {\it Structure of Lie groups and Lie algebras. (Russian)  Current problems in mathematics. Fundamental directions}, Vol. {\bf 41} (Russian), 5--259, Itogi Nauki i Tekhniki, Akad. Nauk SSSR, Vsesoyuz. Inst. Nauchn. i Tekhn. Inform., Moscow, 1990.



\bibitem{Griffiths-Schmid}
Phillip Griffiths and Wilfried Schmid, {\it Locally homogeneous complex manifolds}, Acta Math. {\bf 123} (1969) 253--302.

\bibitem{Kobayashi} 
Shoshichi Kobayashi, {\it Hyperbolic manifolds and holomorphic mappings}, Pure and Applied Mathematics, {\bf 2} Marcel Dekker, Inc., New York 1970 ix+148 pp. 



%\bibitem{Malysev}
%Maly\v{s}ev, F. M.
%{\it Complex homogeneous spaces of semisimple Lie groups of the first category}. (Russian) 
%Izv. Akad. Nauk SSSR Ser. Mat. 39 (1975), no. 5, 992--1002

\bibitem{Montgomery}
Richard Montgomery, 
{\it A tour of subriemannian geometries, their geodesics and applications}, (English summary) 
Mathematical Surveys and Monographs, {\bf 91}. American Mathematical Society, Providence, RI, 2002. xx+259 pp. ISBN: 0-8218-1391-9 




\bibitem{Nakajima}
Kazufumi Nakajima, {\it Homogeneous hyperbolic manifolds and homogeneous Siegel domains}, J. Math. Kyoto Univ. {\bf 25} (1985), no. 2, 269--291.




\bibitem{Royden}
Royden H. L., {\it Remarks on the Kobayashi metric}, Several complex variables, II (Proc. Internat. Conf., Univ. Maryland, College Park, Md., 1970), pp. 125--137. Lecture Notes in Math., Vol. 185, Springer, Berlin, 1971.


\bibitem{Wolf}
Joseph A. Wolf, {\it The action of a real semisimple group on a complex flag manifold. I. Orbit structure and holomorphic arc components}, Bull. Amer. Math. Soc.  {\bf 75} 1969 1121--1237.

 
\bibitem{Yau} 
Shing Tung Yau, {\it A general Schwarz lemma for K\"ahler manifolds}, Amer. J. Math. {\bf 100} (1978), no. 1, 197--203.


\end{thebibliography}
\end{document}